\documentclass[a4paper,12p]{amsart}
\usepackage{amsthm,amsmath,amssymb,mathrsfs,url,amsfonts} 
\theoremstyle{definition}
\newtheorem{thm}{Theorem}[section]
\newtheorem{lem}[thm]{Lemma}
\newtheorem{prop}[thm]{Proposition}

\newtheorem{dfn}{Definition}[section]
\newtheorem{rem}{\textit{Remark}}[section]

\setlength{\evensidemargin}{0mm}
\setlength{\oddsidemargin}{0mm}
\setlength{\topmargin}{-5mm}
\setlength{\textheight}{220mm}
\setlength{\textwidth}{150mm}
\setlength{\marginparwidth}{20mm}
\setlength{\marginparsep}{2mm}



\newcommand\DN{\newcommand}
\newcommand\DR{\renewcommand}

\DN\ts{\times}
\DN\bra{\langle}
\DN\ket{\rangle}
\DN\bs{\bigskip}
\DN\ms{\medskip}

\DN\lref[1]{Lemma~\ref{#1}}
\DN\tref[1]{Theorem~\ref{#1}}
\DN\pref[1]{Proposition~\ref{#1}}
\DN\sref[1]{Section~\ref{#1}}
\DN\dref[1]{Definition~\ref{#1}}
\DN\rref[1]{Remark~\ref{#1}} 
\DN\corref[1]{Corollary~\ref{#1}}
\DN\eref[1]{Example~\ref{#1}}
\numberwithin{equation}{section}
\newcounter{Const} \setcounter{Const}{0}

\DN\Ct[1]{\refstepcounter{Const}c_{\theConst}\label{#1}}
\DN\cref[1]{c_{\ref{#1}}}

\DN{\limi}[1]{\lim_{#1\to\infty}} 	
\DN{\limz}[1]{\lim_{#1\to0}}
\DN{\limdz}[1]{\lim_{#1\downarrow 0}}
\DN{\limsupi}[1]{\limsup_{#1\to\infty}}
\DN{\limsupz}[1]{\limsup_{#1\to0}}
\DN{\limsupdz}[1]{\limsup_{#1\downarrow0}}
\DN{\liminfi}[1]{\liminf_{#1\to\infty}}
\DN{\liminfz}[1]{\liminf_{#1\to0}}
\DN{\liminfdz}[1]{\liminf_{#1\downarrow0}}
\DN{\supnor}[1]{\| #1\|_{\infty}}
\DN{\sumii}[1]{\sum_{#1=1}^{\infty}}
\DN{\sumi}[1]{\sum_{#1=0}^{\infty}}
\DN\tolaw{\stackrel{d}{\longrightarrow }}

\DN\Lm{L^{2}(\mu )}
\DN\Lmuk{L^{2}(\muk )}
\DN\Lploc{L_{\mathrm{loc}}^{p}}
\DN\Lqloc{L_{\mathrm{loc}}^{q}}
\DN\Loneloc{L_{\mathrm{loc}}^{1}}
\DN\Ltwoloc{L_{\mathrm{loc}}^{2}}
\DN\Ci{C^{\infty}}
\DN\Czi{C_{0}^{\infty}}

\DN{\exi}{{}^{\exists}}
\DN{\exiu}{{}^{\exists 1}}
\DN{\any}{{}^{\forall}}

\DN\ord{{\mathcal o}}
\DN\Ord{\mathcal{O}}
\DN\PD[2]{\frac{\partial #1 }{\partial #2}}
\DN\half{\frac{1}{2}}
\DN\map[3]{#1:#2 \to #3}

\DN\laweq{\stackrel{d}{=}}

\DN\R{\mathbb{R}}
\DN\Rd{\mathbb{R}^d}
\DN\N{\mathbb{N}}
\DN\Q{\mathbb{Q}}
\DN{\Z}{\mathbb{Z}}
\DN\C{\mathbb{C}}

\DN\0{\As{I1}--\As{I5}}
\DN\1{\lim_{r \to \infty } \limsup_{N \to \infty} \sup_{x \in S_R} }
\DN\2{\limi{s}\limsupi{N}\sup_{x\in\Sq }}
\DN\3{\aaa _{r,s}^{N,\tail }}
\DN\4{\bbba _{r,s}^{N,\tail }}
\DN\5{\aaa _{r,s}^{N,\tail}}
\DN\6{\bbb _{r,s}^{N,\tail}} 
\DN\7{\bNrs - \bNrsp }
\DN\8{\nabla _i \hhh (\mathbf{X}_u^{m } )}

\DN\9{ \int_0^{t} \half 
\aaa (X_u^{i},\Xidu ) \nabla _i \nabla _i 
\hhh (\mathbf{X}_u^{m } ) + \btail ( X_u^{i} ) 
\cdot \nabla _i \hhh ( \mathbf{X}_u^{m } ) du }
\DN\Lp{L^{\phat }( \muonebar )}
\DN\Lplochat{L_{\mathrm{loc}}^{p}( \muone )}
\DN\LNp{L^{\phat }( \muNonebar )}

\DN\nablax{\nabla _{x}}

\DN\hhh{\psi}
\DN\rsNi{_{r,s,\p }^{N,i}}
\DN\mrs{$ m,r,s \in \N $ such that $ r < s $}
\DN\mart{\sum_{i=1}^m \int_0^{\cdot }
\nabla _i \hhh (\mathbf{X}_u^m) \cdot \sigma (X_u^{i},\Xidu )d\hat{B} _u^i }

\DN\Mart{\sum_{i=1}^m \int_0^{\cdot }
\nabla _i \hhh (\mathbf{X}_u^m) \cdot \sigma (X_u^{i},\Xidu )d\hat{B} _u^i }

\DN\MartN{\sum_{i=1}^m \int_0^{\cdot }
\nabla _i \hhh (\mathbf{X}_u^{\nN ,m}) 
\cdot \sigmaN (X_u^{\nN ,i},\XNidu )dB _u^i }

\DN\MartNp{\sum_{i=1}^m \int_0^{\cdot }
\nabla _i \hhh (\mathbf{X}_u^{\nN ,m}) 
\cdot \sigmaN _{\mathsf{p}}(X_u^{\nN ,i},\XNidu )dB _u^i }

\DN\nc{\nabla _i \hhh ( \mathbf{X}_u^{\nN , m } )}
\DN\grad{\mathrm{grad}}
\DN\ot{\otimes}
\DN\Ai{\mathrm{Ai}}

\DN\phat{\hat{p}}
\DN\p{\mathsf{p}} \DN\q{\mathsf{q}}
\DN\erf{\mathrm{Erf}}
\DN\muonebar{\bar{\mu} _r^{[1]}}
\DN\tail{\mathrm{tail}}
\DN\supN{\sup_{\nN \in \N }}
\DN\nN{N}
\DN\n{N}
\DN\sigmaN{\sigma ^{\nN }}
\DN\sN{\sigma ^{\nN }}
\DN\MNi{\mathsf{M} ^{\nN ,i}}
\DN\MN{\mathsf{M}^{\nN }}
%

\DN\aN{\mathsf{a}^{\nN }} 
\DN\aNi{\mathsf{a}^{\nN ,i}} 
\DN\aNrs{\mathsf{a}^{\nN }}

\DN\bN{\mathsf{b}^{\nN }} 
\DN\bNi{\mathsf{b}^{\nN ,i}}
\DN\bNrs{\mathsf{b}_{r,s}^{\nN }}
\DN\bNrsp{\mathsf{b}_{r,s,\p }^{\nN }}
\DN\bNrsq{\mathsf{b}_{r,s,\q }^{\nN }}
\DN\brsp{\mathsf{b}_{r,s,\p }}
\DN\brsq{\mathsf{b}_{r,s,\q }}
\DN\brs{\mathsf{b}_{r,s }}
\DN\br{\mathsf{b}_{r}}
\DN\bNrstail{\mathsf{b}_{r,s}^{\nN ,\tail }}
\DN\btail{\mathsf{b} ^{\tail }}
\DN\cNrsp{\mathsf{c}_{r,s,\p }^{\nN }}
%

 \DN\ANirs{\mathsf{A}^{\nN ,i}} 
 \DN\ANi{\mathsf{A}^{\nN ,i}} 
 \DN\ANjrs{\mathsf{A}^{\nN ,j}} 
 \DN\Airs{\mathsf{A}^{i}} 
 \DN\ANrs{\mathsf{A}^{\nN }} 

\DN\Birs{\mathsf{B}_{r,s}^{i}}
\DN\Birsp{\mathsf{B}_{r,s,\p }^{i}}
\DN\Brsm{\mathbf{B}_{r,s}^{m}}
\DN\Brspm{\mathbf{B}_{r,s,\p }^{m}}
\DN\Cirsp{\mathsf{C}_{r,s,\p }^{i}}
\DN\BNi{\mathsf{B}^{\nN ,i}}
\DN\BNirs{\mathsf{B}_{r,s}^{\nN ,i}}
\DN\BNjrs{\mathsf{B}_{r,s}^{\nN ,j}}
\DN\BNirsp{\mathsf{B}_{r,s,\p }^{\nN ,i}}
\DN\BNrs{\mathsf{B}_{r,s}^{\nN }}
\DN\BNrsp{\mathsf{B}_{r,s,\p }^{\nN }}
\DN\BNrspm{\mathbf{B}_{r,s,\p }^{\nN ,m}}
\DN\CNirsp{\mathsf{C}_{r,s,\p }^{\nN ,i}}
\DN\CNjrsp{\mathsf{C}_{r,s,\p }^{\nN ,j}}
\DN\CNrsp{\mathsf{C}_{r,s,\p }^{\nN }}

\DN\labN{\lab _{\nN }} \DN\labNm{\lab _{\nN ,m}} \DN\labm{\lab _{m }}
\DN\Dcp{D_{\mathrm{cp}}}

\DN\RNrsp{\mathsf{R}_{r,s,\p }^{\nN ,m }}
\DN\RN{\mathsf{R}_{r,s }^{\nN }}
\DN\QN{\mathsf{Q}_{r,s,\p }^{\nN }}
\DN\Brsp{\mathsf{B}_{r,s,\p }}

\DR\S{S} 
\DR\SS{\mathsf{S}}
\DN\SN{\S ^{\mathbb{N}}}

\DN\iii{\mathrm{i}}
\DN\rrr{{r}}
\DN\ww{\rr } 
\DN\www{\rr (x)}
\DN\rr{{w}}
\DN\rrN{\mathsf{\rr }^{N}}
\DN\rrNs{\rrN _{s}}
\DN\rrbar{\bar{\rr }}

\DN\As[1]{\textbf{(#1)}}

\DN\Ss{\mathit{S}}
\DN\Sk{\S ^{k}}
\DN\SkS{\Sk \times \SSS }
\DN\SSS{\mathsf{S}}
\DN\sss{\mathsf{s}}
\DN\SSSsi{\SSS _{\mathrm{s.i.}}}
\DN\SoneSSS{\S \times \SSS }
\DN\SSSksingle{\SSSsi ^{k}}
\DN\Srm{\SSS _{r}^{m}}
\DN\SSSS{\mathbb{S}}
\DN\SR{\S _R}
\DN\SrSS{\Sr \ts \SS } 
\DN\Sr{\S _r}

\DN\AAA{\mathsf{A}}

\DN\xsss{(x,\sss )}

\DN\muNl{\muN \circ \labN ^{-1}}

\DN\muone{\mu ^{[1]}}
\DN\muN{\mu ^{N}}
\DN\muNcheck{\check{\mu }^{\nN }}
\DN\muNx{\mu ^{N}_{x}}
\DN\muNzero{\mu ^{N}_{0}}
\DN\muNone{\mu ^{N,{[1]}}}
\DN\muk{\mu ^{{[k]}}}
\DN\muNm{\mu ^{{N,[m]}}}
\DN\muNonebar{\bar{\mu }^{N,{[1]}}_{\rrr }}

\DN\mul{\mu \circ \lab ^{-1}}

\DN\bbb{\mathsf{b}}
\DN\aaa{\mathsf{a}}
\DN\xx{\mathbf{x}}
\DN\xxx{\mathsf{x}}
\DN\yy{\mathbf{y}}
\DN\yyy{\mathsf{y}}
\DN\zz{\mathbf{z}}
\DN\zzz{\mathsf{z}}
\DN\xyi{|x-y_i|}

\DN\Sq{\S _{\rrr }}

\DR\d{\mathcal{D}} 
\DN\di{\d _{\circ}}
\DN\dione{C^{\infty}_{0} (\S )\otimes \di }
\DN\dik{\di ^{k}}
\DN\dak{\d ^{\aaa ,k}}

\DN\DDD{\mathbb{D}^{\aaa }}
\DN\DDDk{\mathbb{D}^{\aaa ,k}}

\DN\E{\mathcal{E}}
\DN\Ea{\E ^{\aaa }}
\DN\Eak{\E ^{\aaa ,k}}

\DN\dlog{\mathsf{d}}
\DN\dmu{\dlog ^{\mu }}
\DN\dmuN{\dlog ^{N}}
\DN\dmuone{\dlog ^{\mu ^1}}

\DN\dnuik{C_0^{\infty}(\Sk )\otimes \di }

\DN\ulab{\mathfrak{u} }
\DN\lab{\ell  }
\DN\lpath{\lab _{\mathrm{path}}}
\DN\lkpath{\lab _{k,\mathrm{path}}}
\DN\upath{\ulab _{\mathrm{path}}}

\DN\PP{\mathbf{P}}
\DN\PPs{\mathbf{P}_{\mathbf{s}}}
\DN\PPP{\mathsf{P}}
\DN\PPPs{\mathsf{P} _{\mathsf{s}}}
\DN\Pmu{\mathsf{P}_{\mu }}
\DN\PmuX{\mathsf{P}_{\mu }^{\XXX _t}}
\DN\Pxt{\PPP _{(x,\sss ) }}
\DN\PPk{\PPP ^{k}}
\DN\Pmt{\PPP _{\sss }}
\DN\PPmul{\PP _{\mul }}
\DN\PPmulN{\PP _{\muNl }}
\DN\PmuN{\PPP _{\mu ^N}}

\DN\g{g}
\DN\gN{\g ^{N}}
\DN\gNrs{\gN _{rs}}
\DN\gNs{\gN _{s}}

\DN\ggN{\mathsf{g} ^{N}}
\DN\ggNrs{\ggrs ^{N}}
\DN\ggNst{\ggst ^{N}}
\DN\ggNrt{\ggrt ^{N}}
\DN\ggNs{\ggN _{s}}
\DN\ggNr{\ggN _{r}}
\DN\ggrs{\mathsf{g}_{rs}} 
\DN\ggst{\mathsf{g}_{st}} 
\DN\ggrt{\mathsf{g}_{rt}} 

\DN\ggr{\mathsf{g}_{r}}
\DN\ggtilder{\ggtilde_{r}}
\DN\ggs{\mathsf{g}_{s}}
\DN\ggtildes{\ggtilde_{s}}
\DN\ggtilde{\tilde{\mathsf{g}}}
\DN\ggtilders{\tilde{\mathsf{g}}_{rs}}

\DN\wNr{{\mathsf w}_r^N}

\DN\hh{\mathfrak{h}}
\DN\hsym{h _{\mathrm{sym}}}

\DN\Y{(\mathsf{y}=\sum_{i=1}^{N-1}\delta_{y_i})}

\DN\uu{u}
\DN\uN{\uu ^{N}}
\DN\vN{v^{N}}

\DN\vNsinfty{\int_{\S } \{1- \chi _s (x-y)\} \vN (x,y)dy }
\DN\vNs{\int_{\S } \chi _s (x-y) \vN (x,y)dy }

\DN\vvv{v}
\DN\vsinfty{\int \{1- \chi _s (x-y)\} \vvv (x,y)dy }
\DN\vs{\int_{\S }  \chi _s (x-y) \vvv (x,y)dy }

\DN\rNone{\rho ^{N,\mathrm{1}}}
\DN\rNtwo{\rho ^{N,2}}
\DN\rNnk{\rho ^{N,n+k}}

\DN\F{{\mathcal F}}
\DN\Floc{\dot{{\mathcal F}}_{\mathrm{loc}}}
\DN\Fe{{\mathcal F}_{e}}
\DN\M{{\mathcal M}}
\DN\MA{\mathring{\M }}
\DN\MAloc{\mathring{\M }_{\mathrm{loc}}}
\DN\Nc{{\mathcal N} _c}
\DN\Ncloc{{\mathcal N} _{c,\mathrm{loc}}}
\DN\PCAF{\mathbf{A}_c^{+}}
\DN\Capa{\mathrm{Cap}}

\DN\XX{\mathbf{X}}
\DN\XXX{\mathsf{X}}
\DN\XXtt{\mathbf{X}_{\mathbf{t}}}
\DN\XXXtt{\mathsf{X}_{\mathbf{t}}}
\DN\Xm{\mathbf{X}^{m}}
\DN\XXXtms{\mathsf{X}_{t}^{m*}}
\DN\Xms{\mathbf{X}^{m*}}
\DN\XNm{\mathbf{X}^{N,m}}

\DN\diai{\diamond i}
\DN\diaj{\diamond j}
\DN\Xid{\XXX^{\diai }}
\DN\Xidt{\XXX _{t}^{\diai }}
\DN\Xidu{\XXX _{u}^{\diai }}

\DN\Xjdu{\XXX _{u}^{\diaj }}
\DN\Xjdt{\XXX _{t}^{\diaj }}
\DN\XNidt{\XXX _{t}^{N,\diai }}
\DN\XNidu{\XXX _{u}^{N,\diai }}
\DN\XNidv{\XXX _{v}^{N,\diai }}
\DN\XNjdu{\XXX _{u}^{N,\diaj }}
\DN\XNjdv{\XXX _{v}^{N,\diaj }}
\DN\XNjdz{\XXX _{0}^{N,\diaj }}
\DN\XNjdt{\XXX _{t}^{N,\diaj }}
\DN\BB{\mathbf{B}}

\DN\YY{\mathbf{Y}}
\DN\VV{\mathbf{V}}

\DN\HHH{\mathsf{H}}
\DN\Hone{\HHH^{[1]}}

\DN\WH{C([0,\infty);\HHH )}
\DN\WSN{C([0,\infty);\S ^\N )}
\DN\WTSN{C_T(\S ^\N )}
\DN\WTSS{C_T(\SSS )}
\DN\WTSSsi{C_T(\SSSsi )}
\DN\WTSNz{C_T^{\mathbf{0}}(\S ^\N )}
\DN\Wsol{W_{T,\mathrm{sol}}}
\DN\Wsols{W_{T,\mathrm{sol}}^{\mathbf{s}}}
\DN\Wfixs{W_{T,\mathrm{fix}}^{\mathbf{s}}}

\DN\Tail{\mathrm{Tail}}

\DN\SSSz{\SSS _0}
\DN\SSz{\mathbf{S}_0}

\DN\Psb{\bar{P}_{\mathbf{s}}}
\DN\Psbb{\bar{P}_{\mathbf{s},\mathbf{B}}}
\DN\PBr{P_{\mathrm{Br}}^\infty}

\DN\Tpath{\mathcal{T}_{\mathrm{path}} (\S ^\N)}
\DN\Tpathm{\mathcal{T}_{\mathrm{path}}^m (\S ^\N)}
\DN\Tpone[1]{\mathcal{T}_{\mathrm{path}}^{[1]} (\S ^\N ; #1)}
\DN\Tpcone[1]{\tilde{\mathcal{T}}_{\mathrm{path}}^{[1]} (\S ^\N ; #1)}
\DN\Tpc{\tilde{\mathcal{T}}_{\mathrm{path}} (\S ^\N)}
\DN\TS{\mathcal{T}(\SSS )}
\DN\TSp[1]{\mathcal{T}_{\mathrm{path}}^{\mathbf{#1}}(\SSS )}
\DN\TSone[1]{\mathcal{T}^{[1]}(\SSS ,#1)}
\DN\TSpc{\tilde{\mathcal{T}}_{\mathrm{path}} (\SSS )}
\DN\TSpcone[1]{\tilde{\mathcal{T}}_{\mathrm{path}}^{[1]} (\SSS ; #1)}

\DN\U{\mathcal{U}}
\DN\V{\mathcal{V}}
\DN\CCC{\mathsf{C}}
\DN\Cs[1]{\CCC ^{[\mathsf{#1}]}}

\DN\Fsi{F_{\mathbf{s}}^\infty}
\DN\Fsm{F_{\mathbf{s}}^m}
\DN\Fsii{F_{\mathbf{s}}^{\infty,i}}
\DN\Fsmi{F_{\mathbf{s}}^{m,i}}
\DN\Fsbi{F_{\mathbf{s},\mathbf{B}}^\infty}
\DN\Fsbm{F_{\mathbf{s},\mathbf{B}}^m}

\DN\mrT{\mathsf{m}_{r,T}}
\DN\MrTi{\mathsf{M}_{r,T}^i}

\DN\sigmaXm{\sigma _{\XXX }^m}
\DN\bXm{b _{\XXX }^m}

\DN\muTail{\mu _{\mathrm{Tail}}^{\mathsf{a}}}

\DN\ZNi{Z^{N,i}}

\DN\KtwoN{K _{2}^N}
\DN\StN{S _{\theta } ^{N}}
\DN\muNt{\mu _\theta ^N}

\DN\Srx{S _{r,\infty}^x}
\DN\SrxN{S _{r,\infty}^{x,N}}

\DN\osc[1]{\psi _{#1}}

\DN\rN{\sqrt{N}}
\DN\orN{\frac{1}{\sqrt{N}}}

\DN\Nyt{\frac{1}{x-Ny+N\theta } }
\DN\Nzt{\frac{1}{x-Nz+N\theta } }

\begin{document}

\pagestyle{myheadings}
\markboth{Finite-particle approximations for interacting Brownian particles with logarithmic potentials}{Finite-particle approximations for interacting Brownian particles with logarithmic potentials}

\title
{Finite-particle approximations for interacting Brownian particles with logarithmic potentials }
\author{Yosuke Kawamoto,\quad Hirofumi Osada}

\maketitle

\vskip 0.5cm

\begin{abstract}
We prove the convergence of $ \nN $-particle systems of Brownian particles with logarithmic interaction potentials onto a system described by the infinite-dimensional stochastic differential equation (ISDE). 
For this proof we present two general theorems on the finite-particle approximations of interacting Brownian motions. 
In the first general theorem, we present a sufficient condition for a kind of tightness of solutions of stochastic differential equations (SDE) describing finite-particle systems, and prove that the limit points solve the corresponding ISDE. This implies, if in addition the limit ISDE enjoy a uniqueness of  solutions, then the full sequence converges. 
We treat non-reversible case in the first main theorem. 
In the second general theorem, we restrict to the case of reversible particle systems and simplify the sufficient condition. We deduce the second theorem from the first.  
We apply the second general theorem to Airy$ _{\beta }$ interacting Brownian motion with $ \beta = 1,2,4$, and the Ginibre interacting Brownian motion. 
The former appears in the soft-edge limit of Gaussian (orthogonal/unitary/symplectic) ensembles in one spatial dimension, and the latter in the bulk limit of Ginibre ensemble in two spatial dimensions, corresponding to a quantum statistical system for which the eigen-value spectra belong to non-Hermitian Gaussian random matrices. The passage from the finite-particle stochastic differential equation (SDE) to the limit ISDE is a sensitive problem because the logarithmic potentials are long range and unbounded at infinity. Indeed, the limit ISDEs are not easily detectable from those of finite dimensions. 
Our general theorems can be applied straightforwardly to the grand canonical Gibbs measures with Ruelle-class potentials such as Lennard-Jones 6-12 potentials and and Riesz potentials. 
\end{abstract}

\maketitle

\section{Introduction}\label{s:1}

Interacting Brownian motion in infinite dimensions 
is prototypical of diffusion processes of infinitely many particle systems, 
initiated by Lang \cite{lang.1,lang.2}, followed by Fritz \cite{Fr}, Tanemura \cite{tane.2}, and others. Typically, interacting Brownian motion $ \mathbf{X}=(X^i)_{i\in\N }$ with Ruelle-class (translation invariant) interaction $ \Psi $ and inverse temperature $ \beta \ge 0 $ is given by 
\begin{align}\label{:10a}& \quad 
dX_t^i = dB_t^i - \frac{\beta }{2} 
\sum_{j;j\not=i} ^{\infty}\nabla \Psi (X_t^i-X_t^j ) dt 
\quad (i\in\N )
.\end{align}
Here an interaction $ \Psi $ is called Ruelle-class if $ \Psi $ is super stable in the sense of Ruelle, and integrable at infinity \cite{ruelle.2}.

The system $ \mathbf{X}$ is a diffusion process with state space 
$ \mathbf{S}_0 \subset (\Rd )^{\N }$, and has no natural invariant measures. Indeed, such a measure $ \check{\mu }$, if exists, is informally given by 
\begin{align}\label{:10b}&
\check{\mu } = \frac{1}{\mathcal{Z}} 
 e^{- \beta \sum_{(i,j);\, i<j} ^{\infty}\Psi (x_i-x_j)} 
 \prod_{k\in\N } dx_k 
,\end{align}
which cannot be justified as it is because of the presence of an infinite product of Lebesgue measures. 
To rigorize the expression \eqref{:10b}, the Dobrushin--Lanford--Ruelle (DLR) framework introduces the notion of a Gibbs measure. A point process $ \mu $ is called a $ \Psi $-canonical Gibbs measure if it satisfies the DLR equation: for each $ m \in \N $ and $ \mu $-a.s.\! $ \xi =\sum_i \delta_{\xi _i}$ 
\begin{align}\label{:10c}&
\mu _{r,\xi }^m (d\mathsf{s} ) 
= \frac{1}{\mathcal{Z}_{r,\xi }^m } 
e^{- \beta \{\sum_{i<j ,\, s_i , s_j \in \Sr }^m \Psi (s_i-s_j) + 
\sum_{ s_i \in \Sr, \xi _j \in \Sr ^c }^m \Psi (s_i-\xi _j) \} 
} \prod_{k=1}^m ds_k 
,\end{align}
where $ \mathsf{s} = \sum_i \delta_{s_i}$, $ \Sr = \{ |x| \le r \} $, 
$ \pi_r (\mathsf{s}) = \mathsf{s} (\cdot \cap \Sr )$, and 
$ \xi $ is the outer condition. 
Furthermore, $ \mu _{r,\xi }^m$ denotes the regular conditional probability: 
\begin{align*}&
\mu _{r,\xi}^m (d\mathsf{s} ) = 
\mu (\pi_r (\mathsf{s}) \in d\mathsf{s} | \, \mathsf{s} (\Sr ) = m ,\, \pi_r^c (\mathsf{s})=\pi_r^c (\xi )) 
.\end{align*}
Then $ \mu $ is a reversible measure of 
the delabeled dynamics $ \mathsf{X}$ such that 
$ \mathsf{X}_t = \sum_{i\in \N } \delta _{X_t^i}$. 

If the number of particles is finite, $ \nN $ say, then SDE \eqref{:10a} becomes 
\begin{align}\label{:10e}&
dX_t^{\nN ,\, i } = dB_t^i 
- \frac{\beta }{2} \{ \nabla \Phi ^{\nN }(X_t^{\nN ,\, i })
+ 
\sum_{j;j\not=i} ^{\nN }\nabla \Psi (X_t^{\nN ,\, i }-X_t^{\nN ,\, j }) \} dt 
\quad (1 \le i \le \nN ) 
,\end{align}
where $ \Phi ^{\nN }$ is a confining free potential vanishing zero as $ \nN $ 
goes to infinity. 
The associated labeled measure is then given by 
\begin{align}\label{:10f}&
\muNcheck = \frac{1}{\mathcal{Z}} 
 e^{- \beta \{ \sum_{i=1}^{\nN } \Phi ^{\nN } (x_i) 
+ \sum_{(i,j);\, i < j} ^{\nN }\Psi (x_i-x_j) \} 
} 
 \prod_{k=1 }^{\nN } dx_k 
.\end{align}
The relation between \eqref{:10e} and \eqref{:10f} is as follows. 
We first consider the diffusion process associated with the Dirichlet form 
with domain $ \mathcal{D}^{\muNcheck } $ on $ L^2 ((\Rd )^{\nN } ,\muNcheck )$, 
called the distorted Brownian motion, such that 
$$ \mathcal{E}^{\muNcheck } (f,g) = \int_{(\Rd )^{\nN } } \half \sum_{i=1}^{\nN } 
\nabla _i f \cdot \nabla _i g 
\, \muNcheck (d\mathbf{x}_{\nN })
,$$
where $ \nabla _i = (\PD{}{x_{ij}})_{j=1}^d$, 
$ \mathbf{x}_{\nN } =(x_1,\ldots,x_{\nN } ) \in (\Rd )^{\nN } $, 
 and $ \cdot $ denotes the inner product in $ \Rd $. 
The generator $ - L^{\muNcheck } $ of $ \mathcal{E}^{\muNcheck }$ is then given by 
$$ \mathcal{E}^{\muNcheck } (f,g) = 
 ( - L^{\muNcheck } f, g )_{L^2((\Rd )^{\nN } , \muNcheck )}
.$$
Integration by parts yields the representation of 
the generator of the diffusion process such that 
\begin{align*}&
L^{\muNcheck } =  \half \Delta - \frac{\beta }{2} 
\sum_{i=1}^{\nN } \{ 
\nabla {\Phi ^{\nN }} (x_i)
+ \sum_{j;\, j\not=i } ^{\nN }\nabla \Psi (x_i-x_j) 
\}\cdot \nabla _i 
,\end{align*}
which together with It$ \hat{\mathrm{o}}$ formula yields SDE \eqref{:10e}. 

For a finite or infinite sequence $ \mathbf{x}=( x_i ) $, we set 
$ \ulab (\mathbf{x}) = \sum_i \delta_{x_i}$ and call $ \mathfrak{u}$ 
a delabeling map. 
For a point process $ \mu $, we say a measurable map $ \lab = \lab (\mathsf{s})$ 
defined for $ \mu $-a.s.\! $ \mathsf{s}$ 
with value $ \S ^\infty \cup \{\bigcup _{\nN =1}^{\infty}\S ^{\nN } \} $ 
is called a label with respect to $ \mu $ if 
$\ulab \circ \lab (\mathsf{s}) = \mathsf{s}$. 
Let $ \labN $ be a label with respect to $ \muN $. 
We denote by $ \lab _m $ and $ \labNm $ the first $ m $-components of these labels, respectively. We take $ \Phi ^{\nN } $ such that the associated point process $ \muN = \muNcheck \circ \ulab ^{-1}$ converges weakly to $ \mu $:
\begin{align}\label{:10g}&
\limi{\nN } \muN = \mu \quad \text{ weakly} 
.\end{align}
The associated delabeling 
$ \mathsf{X}^{\nN } = \sum_{i=1}^{\nN } 
\delta _{X^{\nN ,\, i}}$ is reversible with respect to $ \muN $. The labeled process $ \mathbf{X}=(X^i)$ and $ \mathbf{X}^{\nN } = (X^{\nN ,\, i})$ 
can be recovered from $ \mathsf{X}$ and $ \mathsf{X}^{\nN }$ by taking suitable initial labels $ \lab $ and $ \labN $, respectively. 
Choosing the labels in such a way that for each $ m \in \N $
\begin{align}\label{:10h}&
\limi{\nN } \muN \circ \labNm ^{-1 } = 
\mu \circ \labm ^{-1}\quad \text{ weakly}
,\end{align}
we have the convergence of labeled dynamics $ \mathbf{X}^{\nN }$ to $ \mathbf{X}$ such that for each $ m $
\begin{align}\label{:10i}&
\limi{\nN } (X^{\nN ,1},\ldots,X^{\nN ,m }) = 
 (X^{1},\ldots,X^{m }) 
\quad \text{ in law in } C([0,\infty ); (\Rd )^m )
.\end{align}
We expect this convergence because of 
the absolute convergence of the drift terms in \eqref{:10a} and 
energy in the DLR equation \eqref{:10c} for well-behaved initial distributions 
although it still requires some work to justify this rigorously even if $ \Psi \in C_0^3(\Rd )$ \cite{lang.1}. 

If we take logarithmic functions as interaction potentials, 
then the situation changes drastically. Consider the soft-edge scaling limit of 
Gaussian (orthogonal/unitary/symplectic) ensembles. 
Then the $ \nN $-labeled density is given by 
\begin{align}\label{:10k}&
\check{\mu } _{\mathrm{Airy}, \beta } ^{\nN }(d\mathbf{x}_{\nN })= 
\frac{1}{Z}
\{ \prod_{i<j}^{\nN }|x_i-x_j|^\beta \} 
\exp\bigg\{-\frac{\beta}{4}\sum_{k=1}^{\nN } |2\sqrt{\nN }+\nN ^{-1/6}x_k|^2 \bigg\}
d\mathbf{x}_{\nN } 
\end{align}
and the associated $ \nN $-particle dynamics described by SDE
\begin{align}\label{:10l}&
dX_t^{\n ,i} = dB_t^i + 
\frac{\beta }{2} \sum_{j=1,\, j\not= i}^{\n } 
\frac{1}{X_t^{\n ,i} - X_t^{\n ,j} } dt 
- \frac{\beta }{2 } 
\{ \n ^{1/3} + \frac{1}{2\n ^{1/3}}X_t^{\n ,i} \}dt 
.\end{align}
The correspondence between \eqref{:10k} and \eqref{:10l} is transparent and 
 same as above. Indeed, we first consider distorted Brownian motion 
(Dirichlet spaces with $ \check{\mu } _{\mathrm{Airy}, \beta } ^{\nN }$ 
as a common time change and energy measure), 
then we obtain the generator of the associated diffusion process by integration by parts. SDE \eqref{:10l} thus follows from the generator immediately. 

It is known that the thermodynamic limit $\mu _{\mathrm{Airy},\beta }$
of the associated point process $ \mu _{\mathrm{Airy}, \beta } ^{\nN }$ 
exists for each $ \beta >0$ \cite{rrv.airy}. 
Its $ m $-point correlation function is explicitly given as a determinant of certain kernels if $ \beta = 1,2,4$ \cite{AGZ,mehta}. Indeed, if $ \beta = 2 $, then the $ m $-point correlation function of 
the limit point process $ \mu _{\mathrm{Airy},2}$ is 
\begin{align} \notag &
\rho _{\Ai ,2}^{m} (\mathbf{x}_{m}) = 
\det [K_{\Ai , 2}(x_i,x_j) ]_{i,j=1}^{m} 
,\end{align}
where $ K_{{\Ai },2}$ is the continuous kernel such that, for $ x \not= y $, 
\begin{align} & \notag 
K_{{\Ai },2}(x,y) = 
\frac{{\Ai }(x) {\Ai }'(y)-{\Ai }'(x) {\Ai }(y)}{x-y}
.\end{align}
We set here ${\Ai }'(x)=d {\Ai }(x)/dx$ and 
denote by ${\Ai }(\cdot)$ the Airy function given by 
\begin{align}& \notag 
{\Ai }(z) = \frac{1}{2 \pi} \int_{\R} dk \,
e^{i(z k+k^3/3)},
\quad z\in\R 
.\end{align}
For $ \beta = 1,4 $ similar expressions in terms of the quaternion determinant 
are known \cite{AGZ,mehta}. 

From the convergence of equilibrium states, we may expect the convergence of solutions of SDEs \eqref{:10l}. 
The divergence of the coefficients in \eqref{:10l} and the very long-range nature of the logarithmic interaction however prove to be problematic. 
Even an informal representation of the limit coefficients is nontrivial but 
has been obtained in \cite{o-t.airy}. Indeed, the limit ISDEs are given by 
\begin{align} \label{:10H} &
dX_t^i=dB_t^i+
\frac{\beta }{2}
\limi{r} \{ \sum_{|X_t^j|<r, j\neq i} \frac{1}{X_t^i-X_t^j} -\int_{|x|<r}\frac{\varrho (x)}{-x}\,dx\}dt \quad (i\in\N)
.\end{align}
Here $ \varrho (x) = 1_{(-\infty , 0)} (x) \sqrt{-x}$, which is the shifted and rescaled semicircle function at the right edge. 

As an application of our main theorem (\tref{l:22}), 
we prove the convergence \eqref{:10i} of solutions from \eqref{:10l} to \eqref{:10H} for 
 $ \{ \mu _{\mathrm{Airy}, \beta } ^{\nN } \} $ with $ \beta = 2 $. 
We also prove that the limit points of solutions of \eqref{:10l} 
satisfy ISDE \eqref{:10H} with $ \beta = 1,2,4 $. 

For general $ \beta \not=1,2,4$, the existence and uniqueness of solutions of \eqref{:10H} is still an open problem. 
Indeed, the proof in \cite{o-t.airy} relies on a general theory developed in 
\cite{o.tp,o.isde,o.rm,o.rm2,o-t.tail}, which reduces the problem to the quasi-Gibbs property and the existence of the logarithmic derivative of the equilibrium 
state. These key properties are proved only for $ \beta = 1,2,4 $ at present. 
We refer to \cite{o.rm,o.rm2} for the definition of the quasi-Gibbs property and 
\dref{d:21} for the logarithmic derivative.

Another typical example is the  Ginibre interacting Brownian motion, which is an infinite-particle system in $ \R ^2$ (naturally regarded as $ \mathbb{C}$), whose equilibrium state is  the Ginibre point process $ \mu _{\mathrm{gin}} $. 
The $ m $-point correlation function $ \rho_{\mathrm{gin}}^m $ with respect to 
Gaussian measure $ (1/\pi ) e^{-|x|^2}dx $ on $ \mathbb{C}$ is then given by 
\begin{align}& \notag 
 \rho_{\mathrm{gin}}^m (\mathbf{x}_m)=\det [e^{x_i\bar{x}_j}]_{i,j=1}^m 
.\end{align}
The Ginibre point process $ \mu _{\mathrm{gin}} $ 
is the thermodynamic limit of $ \nN $-particle point process 
$ \mu _{\mathrm{gin}}^{\nN } $ whose labeled measure is given by 
\begin{align}& \notag 
\check{\mu }_{\mathrm{gin}}^{\nN } (d\mathbf{x}_{\nN }) = 
\frac{1}{\mathcal{Z} } \prod_{i<j}^{\nN }|x_i-x_j|^2 
e^{-\sum_{i=1}^{\nN }|x_i|^2} d\mathbf{x}_{\nN }
.\end{align}
The associated $ \nN $-particle SDE is then given by 
\begin{align}\label{:10r} 
dX_t^{N,i} = \, & dB_t^i-X_t^{N,i}dt +\sum_{j=1, j\neq i}^{\nN } \frac{X_t^{N,i}-X_t^{N,j}}{|X_t^{N,i}-X_t^{N,j}|^2}dt \quad (1\le i\le \nN )
.\end{align}
We shall prove that the limit ISDEs are 
\begin{align}\label{:10s}
dX_t^i= \, & 
dB_t^i+\limi{r} \sum_{|X_t^i-X_t^j|<r, j\neq i} \frac{X_t^i-X_t^j}{|X_t^i-X_t^j|^2}dt \quad (i\in\N)
\\\intertext{
{\em and }}
\label{:10t}
dX_t^i= \, & 
dB_t^i - X_t^idt + 
\limi{r} \sum_{|X_t^j|<r, j\neq i} \frac{X_t^i-X_t^j}{|X_t^i-X_t^j|^2}dt \quad (i\in\N)
.\end{align}
In \cite{o.isde, o-t.tail}, it is proved that these ISDEs have the same pathwise unique strong solution for $ \mu _{\mathrm{gin}}\circ \lab^{-1} $-a.s. $ \mathbf{s}$, 
where $ \lab $ is a label and $ \mathbf{s}$ is an initial point. 
As an example of applications of our second main theorem (\tref{l:22}), 
we prove the convergence of 
solutions of \eqref{:10r} to those of \eqref{:10s} and \eqref{:10t}. 
This example indicates again the sensitivity of the representation of the limit ISDE. Such varieties of the limit ISDEs are a result of the long-range nature of the logarithmic potential.  

The main purpose of the present paper is to develop a general theory for finite-particle convergence applicable to logarithmic potentials, and in particular, the Airy and Ginibre point processes. Our theory is also applicable to essentially all Gibbs measures with Ruelle-class potentials such as the Lennard-Jones 6-12 potential and Riesz potentials. 

In the first main theorem (\tref{l:21}), we present a sufficient condition for a kind of tightness of solutions of stochastic differential equations (SDE) describing finite-particle systems, and prove that the limit points solve the corresponding ISDE. This implies, if in addition the limit ISDE enjoy uniqueness of  solutions, then the full sequence converges. 
We treat non-reversible case in the first main theorem. 

In the second main theorem (\tref{l:22}), we restrict to the case of reversible particle systems and simplify the sufficient condition. Because of reversibility, the sufficient condition is reduced to the convergence of logarithmic derivative of $ \mu ^{\nN }$ with marginal assumptions. 
We shall deduce \tref{l:22} from \tref{l:21} and 
apply \tref{l:22} to all examples in the present paper. 

If $ \Psi (x) = -\log |x|$, $ \beta = 2 $ and $ d=1$, there exists an algebraic method to construct the associated stochastic processes \cite{j.02,j.03,KT07b,KT11}, and to prove the convergence of finite-particle systems \cite{o-t.sm,o-t.core}. This method requires that interaction $ \Psi $ is the logarithmic function with $ \beta = 2$ and depends crucially on an explicit calculation of space-time determinantal kernels. It is thus not applicable to $ \beta \not=2$ even if $ d=1$. 

As for Sine$_{\beta } $ point processes, Tsai proved the convergence of finite-particle systems for all $ \beta \ge 1$ \cite{tsai.14}. His method relies on a coupling method based on monotonicity of SDEs, which is very 
specific to this model. 

The organization of the paper is as follows: 
In \sref{s:2}, we state the main theorems (\tref{l:21} and \tref{l:22}). 
In \sref{s:3}, we prove \tref{l:21}. 
In \sref{s:4}, we prove \tref{l:22} using \tref{l:21}. 
In \sref{s:5}, we present examples. 

\section{Set up and the main theorems}\label{s:2}

\subsection{Configuration spaces and Campbell measures}\label{s:21}
Let $ \S $ be a closed set in $ \Rd $ whose interior 
$ \S _{\mathrm{int}}$ is a connected open set 
satisfying $ \overline{\S _{\mathrm{int}}} = \S $ and 
the boundary $ \partial \S $ having Lebesgue measure zero. 
A configuration 
$ \mathsf{s} = \sum_i \delta _{s_i}$ on $ \S $ 
is a Radon measure on $ \S $ consisting of delta masses. 
We set $ \Sr = \{ s \in \S \, ;\, |s| \le r \} $. 
Let $ \SS $ be the set consisting of all configurations of $ \S $. 
By definition, $ \SS $ is given by 
\begin{align} & \notag 
\SS = \{ \mathsf{s} = \sum_{i} \delta _{s_i}\, ;\, 
 \text{ $\mathsf{s} ( \Sr ) < \infty $ for each $ r\in\N $} \} 
.\end{align}
By convention, we regard the zero measure as an element of $ \SS $. 
We endow $ \SS $ with the vague topology, which makes 
$ \SS $ a Polish space. 
$ \SS $ is called the configuration space over $ \S $ and a probability measure $ \mu $ on $ (\SS , \mathcal{B}(\SS ) )$ 
is called a point process on $ \S $. 

A symmetric and locally integrable function 
$ \map{\rho ^n }{\S ^n}{[0,\infty ) } $ is called 
the $ n $-point correlation function of a point process $ \mu $ 
on $ \S $ with respect to the Lebesgue measure if $ \rho ^n $ satisfies 
\begin{align} & \notag 
\int_{A_1^{k_1}\ts \cdots \ts A_m^{k_m}} 
\rho ^n (x_1,\ldots,x_n) dx_1\cdots dx_n 
 = \int _{\SS } \prod _{i = 1}^{m} 
\frac{\mathsf{s} (A_i) ! }
{(\mathsf{s} (A_i) - k_i )!} d\mu 
 \end{align}
for any sequence of disjoint bounded measurable sets 
$ A_1,\ldots,A_m \in \mathcal{B}(\S ) $ and a sequence of natural numbers 
$ k_1,\ldots,k_m $ satisfying $ k_1+\cdots + k_m = n $. 
When $ \mathsf{s} (A_i) - k_i < 0$, according to our interpretation, 
${\mathsf{s} (A_i) ! }/{(\mathsf{s} (A_i) - k_i )!} = 0$ by convention. 
Hereafter, we always consider correlation functions with respect to Lebesgue measures.

A point process $ \mu _{x}$ is called the reduced Palm measure of $ \mu $ 
conditioned at $ x \in \S $ if $ \mu _{x}$ is the regular conditional probability defined as 
\begin{align} & \notag 
 \mu _{x} = \mu (\cdot - \delta_x | \mathsf{s} (\{ x \} ) \ge 1 )
.\end{align} 
A Radon measure $ \muone $ on $ \S \times \SS $ 
is called the 1-Campbell measure of $ \mu $ if $ \muone $ is given by 
\begin{align}\label{:21h}&
 \muone (dx d\mathsf{s}) = \rho ^1 (x) \mu _{x} (d\mathsf{s}) dx 
.\end{align}

\subsection{Finite-particle approximations (general case)}\label{s:22}

Let $ \{\muN \} $ be a sequence of point processes on $\S $ 
such that $ \muN (\{ \mathsf{s}(\S ) = \nN \} ) = 1 $. We assume: \\
\As{H1} Each $ \muN $ has a correlation function $ \{\rho ^{N,n}\} $ 
satisfying for each $r \in\N$ 
\begin{align}\label{:20f} & 
\limi{N} \rho ^{N,n} (\mathbf{x})= 
\rho ^{n} (\mathbf{x}) \quad \text{ uniformly on $\Sr ^{n}$ for all $n\in\N$} 
,\\ \label{:20g}& 
\sup_{N\in\N } \sup_{\mathbf{x} \in \Sr ^{n}} \rho ^{N,n} (\mathbf{x}) \le 
\cref{;40b} ^{n} n ^{\cref{;40c}n} 
,\end{align}
where $ 0 < \Ct{;40b}(r) < \infty $ and 
$ 0 < \Ct{;40c}(r)< 1 $ are constants independent of $ n \in \N $. 

It is known that \eqref{:20f} and \eqref{:20g} imply weak convergence 
\eqref{:10g} \cite[Lemma A.1]{o.rm}. As in \sref{s:1}, let $ \lab $ and $ \labN $ 
be  labels of $ \mu $ and $ \muN $, respectively. We assume: 

\medskip
\noindent
\As{H2} For each $m\in\N$, \eqref{:10h} holds. That is, 
 \begin{align}\tag{\ref{:10h}}& \quad 
 \limi{N}\mu^{\nN } \circ \labNm ^{-1} =\mu \circ \labm ^{-1}
 \quad \text{ weakly in $ \S ^m $}
 .\end{align}

\medskip 

We shall later take $ \mu^{\nN } \circ \labN ^{-1} $ 
as an initial distribution of a labeled finite-particle system. 
Hence \As{H2} means convergence of the initial distribution of the labeled dynamics. 
There exist infinitely many different labels $\lab $, and 
we choose a label such that the initial distribution of the labeled dynamics converges. \As{H2} will be used in \tref{l:22} and \tref{l:21}. 

\medskip 
For $ \mathbf{X}=(X^i)_{i=1}^{\infty}$ and 
$ \mathbf{X}^{\nN }=(X^{\nN ,i})_{i=1}^{\nN }$, we set 
\begin{align} & \notag 
 \Xidt = \sum_{j\not=i}^{\infty} \delta_{X_t^{j}}, \quad \text{ and }\quad 
 \mathsf{X}_t^{N,\diai }= \sum_{j\not=i}^{ N } \delta_{X_t^{N,j}} 
,\end{align}
where $X_t^{N,\diai }$ denotes the zero measure for $ \nN = 1 $. 
Let $ \map{\sN ,\sigma }{\S \ts \SSS }{\R ^{d^2}}$ and 
$ \map{\bbb ^{\nN } ,\bbb }{\S \ts \SSS }{\Rd }$ be measurable functions. 
We introduce the finite-dimensional SDE 
of $ \mathbf{X}^{\nN } =(X^{\nN ,i})_{i=1}^{\nN } $ 
with these coefficients such that for $ 1\le i\le N $ 
\begin{align}\label{:20m}
dX_t^{N,i} &= 
\sN (X_t^{N,i},\XNidt )dB_t^i + 
\bbb ^{\nN }(X_t^{N,i},\XNidt )dt 
\\\label{:20n}
\XX _0^{\nN } & = \mathbf{s} 
.\end{align}
We assume: 

\medskip

\noindent \As{H3} 
 SDE \eqref{:20m} and \eqref{:20n} 
has a unique solution for $ \muN \circ \labN ^{-1}$-a.s.\! $ \mathbf{s}$ 
for each $ \nN $: this solution does not explode. 
Furthermore, when $ \partial \S $ is non-void, particles never hit the boundary.

\bs

We set $ \aN = \sN {}^{t}\sN $ and assume: 

\medskip 
\noindent 
\As{H4} 
$\sN $ are bounded and continuous on $ \S \ts \SS $, 
and converge uniformly to $\sigma $ on $ \SrSS $ for each $ r \in \N $. 
Furthermore, $\aN $ are uniformly elliptic on $ \Sr \ts \SS $ for each $ r \in \N $ and $ \nablax  \aN $ are uniformly bounded on $ \S \ts \SS $.

\medskip 

From \As{H4} we see that 
$ \aN $ converge uniformly to $ \aaa := \sigma {}^t\sigma $
 on each compact set $ \SrSS $, and that 
$ \aN $ and $\aaa $ are bounded and continuous on $ \S \ts \SS $. 
There thus 
exists a positive constant $\Ct{;3} $ 
such that 
\begin{align}\label{:20o}& 
||\aaa ||_{\S \ts \SS } ,\ 
||\nabla_x \aaa ||_{\S \ts \SS } , \ 
\sup_{\nN \in\N } ||\aN ||_{\S \ts \SS } ,\ 
\sup_{\nN \in\N } ||\nablax  \aN ||_{\S \ts \SS } \le \cref{;3} 
.\end{align}
Here $ \| \cdot \|_{\S \ts \SS } $ 
denotes the uniform norm on $ \S \ts \SS $. 
Furthermore, we see that $ \aaa $ is uniformly elliptic on each $ \SrSS $. 
From these, we expect that SDEs \eqref{:20m} 
have a sub-sequential limit. 
\begin{align} \notag 
\limi{\nN }\{X_t^{N,i} - X_0^{N,i}\} &= 
\limi{\nN }\int_0^t \sN (X_t^{N,i},\XNidt )dB_u^i + 
\limi{\nN } \int_0^t \bbb ^{\nN }(X_t^{N,i},\XNidt )du 
\\ \notag 
&= 
\int_0^t \sigma (X_t^{N,i},\XNidt )dB_u^i + 
\limi{\nN } \int_0^t \bbb ^{\nN }(X_t^{N,i},\XNidt )du 
.\end{align}
To identify the second term on the right-hand side and to justify the convergence, we make further assumptions. As the examples in \sref{s:1} suggest, the identification of the limit is a sensitive problem, which is at the heart of the present paper. 

\bs

We set the maximal module variable $ \overline{\mathbf{X}}^{N,m}$ 
of the first $ m $-particles by 
\begin{align}&\notag 
\overline{\mathbf{X}}^{N,m}= \max_{i=1}^m 
 \sup_{t\in [0,T] }|X_t^{N,i}| 
.\end{align}
and by $ \mathcal{L}_r^{N}$ the maximal label with which the particle 
intersects $ \Sr $; that is, 
\begin{align}&\notag 
\mathcal{L}_r^{N} = 
\max \{ i \in \N \cup \{ \infty \} \, ;\, |X_t^{N,i}| \le r 
\text{ for some } 0\le t \le T \} 
.\end{align}
We assume the following. 
\\
\As{I1} For each $ m \in \N $ 
\begin{align}\label{:40x}&
\limi{a} \liminfi{\nN }P ^{\muNl } 
(\overline{\mathbf{X}}^{N,m} \le a ) = 1 
\end{align}
and there exists a constant $\Ct{;41a}=\cref{;41a}(m,a)$ such that 
for $ 0 \le t , u \le T $
\begin{align} \label{:40z} & 
\supN \sum_{i=1}^m 
\mathrm{E} ^{\muNl }
[|X _t^{N,i} - X _u^{N,i}|^{4 };\overline{\mathbf{X}}^{N,m} \le a ] 
 \le \cref{;41a} |t-u|^{2} 
.\end{align}
Furthermore, for each $ r \in \N $
\begin{align}\label{:40p}&
\limi{L} \liminfi{\nN }P ^{\muNl } 
(\mathcal{L}_r^{N} \le L ) = 1 
.\end{align}

Let $ \muNone $ be the one-Campbell measure of $ \muN $ 
defined as \eqref{:21h}. Set $ \Ct{;43}(\rrr , N)= \muNone ( \Sr \ts \SSS )$. 
Then by \eqref{:20g} $\sup_{N}\cref{;43}(\rrr ,N)<\infty $ 
for each $\rrr \in \N $. Without loss of generality, 
we can assume that $\cref{;43} > 0 $ for all $\rrr , N $. 
Let $ \muNone _r = \muNone (\cdot \cap \{\Sr \ts \SSS \}) $. 
Let $ \muNonebar $ be the probability measure defined as 
$\muNonebar (\cdot )= 
\muNone (\cdot \cap \{\Sr \ts \SSS \})/\cref{;43}$. 
Let $ \varpi _{r,s}$ be a map from $ \Sr \ts \SS $ to itself such that 
$ \varpi _{r,s} (x,\mathsf{s}) = (x,\sum_{|x-s_i|<s} \delta_{s_i})$, where 
$ \mathsf{s}= \sum_i \delta_{s_i}$. 
Let $ \mathcal{F}_{\rrr ,s} = \sigma [\varpi _{r,s}]$ 
be the sub-$ \sigma $-field of $ \mathcal{B}(\Sr \ts \mathcal{\SS }) $ 
generated by $ \varpi _{r,s}$. Because 
$ \Sr $ is a subset of $ \S $, we can and do regard 
$ \mathcal{F}_{\rrr ,s} $ as a $ \sigma $-field on $ \S \ts \SS $, 
which is trivial outside $ \Sr \ts \SS $. 

We set a tail-truncated coefficient $ \bNrs $ of $ \bN $ and their tail parts $ \6 $ by 
\begin{align}
\label{:40a}&
 \bbb _{r,s}^{\nN } = 
 \mathrm{E}^{\muNonebar }
[ \bbb ^{N}|\mathcal{F}_{\rrr ,s} ] ,\quad 
\bN = \bbb _{r,s}^{\nN } + \6 
.\end{align}
We can and do take a version of $ \bNrs $ such that 
\begin{align}
\label{:40b}& 
\bNrs (x,\mathsf{y}) = 0 \quad \text{ for } x \not\in \Sr 
,\\ 
\label{:40c}& 
 \bNrs (x,\mathsf{y}) = \bbb _{r+1,s}^{N} (x,\mathsf{y}) 
\quad \text{ for } x \in \S _{r} 
.\end{align}

We next introduce a cut-off coefficient $ \bNrsp $ of $ \bNrs $. 
Let $ \bNrsp $ be a continuous and $ \mathcal{F}_{\rrr ,s}$-measurable function 
on $ \S \ts \SS $ such that 
\begin{align} \label{:40ff}
 \bNrsp (x,\mathsf{y}) &=0 
\quad \text{ for } x \not\in \Sr 
\\
\label{:40f}
 \bNrsp (x,\mathsf{y}) & = \bbb _{r+1,s,\p }^{N} (x,\mathsf{y}) ,\quad 
\quad \text{ for } x \in \S _{r-1} 
\end{align}
and that, for $  (\S \ts \SS )_{r,\p } = \{ (x,\mathsf{y} ) \in \S _{r} \ts \SS 
;\, |x- y_i|\le 1/2^{\p } \text{ for some } y_i \}$, where 
$ \mathsf{y}=\sum_i\delta_{y_i}$, 
\begin{align}
\label{:40F}&
 \bNrsp (x,\mathsf{y}) = 0
\quad \text{ for $ (x,\mathsf{y}) \in (\S \ts \SS )_{r,\p +1} $}
,\\ 
\label{:40g}&
 \bNrsp (x,\mathsf{y}) = \bbb _{r,s}^{N} (x,\mathsf{y}) 
 \quad \text{ for $ (x,\mathsf{y}) \not\in (\S \ts \SS )_{r,\p } $}
.\end{align}

The main requirements for $ \bN $ and $ \bNrsp $ are the following: 

\ms 

\noindent 
\As{I2} There exists a $ \phat $ such that 
$ 1 < \phat $ and that for each $ r \in \N $
\begin{align}\label{:40h}&
\limsupi{\nN } \int _{\SrSS }|\bN | ^{\phat } d\muNone < \infty 
.\end{align}
Furthermore, for each $ r , i \in \N $, there exists a constant $ \Ct{;40i}$ such that 
\begin{align}\label{:40i}& 
\sup_{\p \in\N } 
\sup_{\nN \in \N } E ^{\muNl } 
[\int_0^T | \bNrsp (X_t^{N,i},\XNidt ) | ^{\phat } dt ] \le 
\cref{;40i}
.\end{align}

\ms 

We set $ \SS _r^m = \{\mathsf{s}\, ; \mathsf{s}(\Sr ) = m\}$. 
Let $ \| \cdot \|_{\S \ts \SS _r^m} $ denote the uniform norm on 
$ \S \ts \SS _r^m $ and set 
$ L^{\phat }(\muNone _r ) = L^{\phat }(\Sr \ts \SS , \muNone )$. 
For a function $ f $ on $\S \ts \SS _r^m $ we denote by 
$ \nabla f = (\nablax  \check{f}, \nabla_{y_i} \check{f})$, 
where $ \check{f} $ is a function on $ \Sr \ts \Sr ^m $ such that 
$ \check{f}(x,(y_i)_{i=1}^m)$ is symmetric in $ (y_i)_{i=1}^m $
 for each $ x $ and $ f(x,\sum_i\delta_{y_i}) = \check{f}(x,(y_i)_{i=1}^m)$. 
We decompose $ \bNrs $ as 
\begin{align}\label{:40d}&
\bNrs = \bNrsp + \7 
\end{align}
and we assume: 

\medskip 
\noindent 
\As{I3} 
For each $ m ,\p , r , s \in \N $ such that $ r < s $, there exists 
$ \bbb _{r,s,\p } $ such that 
\begin{align}
\label{:41r} & 
\limi{\nN } \| \bbb _{r,s,\p }^{\nN } - \bbb _{r,s,\p } 
\|_{\S \ts \SS _r^m }
= 0 
.\\
\intertext{Moreover, $ \bbb _{r,s,\p }^{\nN } $ are differentiable in $ x $ and satisfying the bounds: }
\label{:41U}& 
\supN \| \nabla \bbb _{r,s,\p }^{\nN } \|_{\S \ts \SS _r^m } < \infty 
,\\\label{:41s}&
\limi{\p } \supN \| \bNrsp - \bNrs \|_{ L^{\phat }( \muNone _r ) } =0 
.\end{align}
Furthermore, we assume for each $ i , r < s \in \N $ 
\begin{align}\label{:41t}&
\limi{\p } \limsupi{\nN } 
E ^{\muNl } 
[\int_0^T | \{ \bNrsp - \bNrs \} (X_t^{N,i},\XNidt ) | ^{\phat }
dt ] = 0 
,\\\label{:41tt}&
\limi{\p }
E ^{\mul } 
[\int_0^T | \{ \brsp - \brs \} (X_t^{i},\Xidt ) | ^{\phat }
dt ] = 0 
,\end{align}
where $ \bbb _{r,s} $ is such that 
\begin{align}\label{:50l}&
 \bbb _{r,s} (x,\mathsf{y}) = 
\limi{\nN } \bbb _{r,s}^{\nN } (x,\mathsf{y}) 
\quad \text{ for each } (x,\mathsf{y}) \in \bigcup_{\p \in \N }
 (\S \ts \SS )_{r,\p } ^c 
.\end{align}
\begin{rem}\label{r:J3} 
We see that $ \bigcup_{\p \in \N }
 (\S \ts \SS )_{r,\p } ^c = \{\Sr ^c \ts \SS \} \cup 
\{ (x,\mathsf{y}) ; x \not= y_i \text{ for all }i\} $ by definition and 
$ \bbb _{r,s} (x,\mathsf{y}) = 0 $ for $ x \not\in \Sr $ by \eqref{:40b}.  
The limit in \eqref{:50l} exists because of 
 \eqref{:40F}, \eqref{:40g}, and \eqref{:41r}. 
\end{rem}

\noindent 
\As{I4} 
There exists a $ \btail \in C (\S ; \Rd )$ independent of $ r \in \N $ and 
$ \mathsf{s} \in \SSS $ such that 
\begin{align} \label{:41u} & 
\limi{s} \limsupi{\nN } \| \bNrstail - \btail 
\|_{ L^{\phat }( \muNone _r ) } 
 =0 
.\end{align}
Furthermore, for each $ r , i \in \N $: 
\begin{align}\label{:41v}&
\limi{s} \limsupi{\nN } 
 E ^{\muNl } 
[\int_0^T | ( \bNrstail - \btail ) (X_t^{N,i},\XNidt ) | ^{\phat }
dt ] = 0 
.\end{align}

We remark that $ \btail $ is automatically independent of $ r $ 
for consistency \eqref{:40g}. 
By assumption, $ \btail = \btail (x)$ is a function of $ x $. 
From \eqref{:40a} and \eqref{:40d} we have 
\begin{align}\label{:41w}&
\bN = \bNrsp + \btail + \{\7 \} + \{ \6 -\btail \} 
.\end{align}
In \As{I3} and \As{I4}, we have assumed that the last two terms 
$ \{\7 \} $ and $ \{ \6 -\btail \} $ in \eqref{:41w} are asymptotically negligible. 

Under these assumptions, we prove in \lref{l:50} that 
there exists $ \bbb $ such that for each $ r \in \N $
\begin{align}
\label{:41a}&
\limi{s} \| \brs - \bbb \|_{ L^{\phat }( \muNone _r ) } = 0 
.\end{align}
We assume: \\ 
\As{I5} For each $ i , r \in \N $ 
\begin{align}\label{:41z}&
\limi{s }
E ^{\mul } 
[\int_0^T | ( \brs - \mathsf{b} ) (X_t^{i},\Xidt ) | ^{\phat }
dt ] = 0 
.\end{align}

We say a sequence $ \{ \mathbf{X}^{\nN } \} $ of $ C([0,T];\S ^{\nN })$-valued 
random variables is tight if for any subsequence we can choose a subsequence 
denoted by the same symbol such that 
$ \{ \mathbf{X}^{\nN ,m} \}_{\nN \ge m } $ is convergent in law 
in $ C([0,T]; \S ^m )$ for each $ m \in \N $. 
With these preparations, we state the main theorem in this section. 
\begin{thm} \label{l:21}
Assume \As{H1}--\As{H4} and \0. 
Then, $ \{ \mathbf{X}^{\nN } \} _{\nN \in \N } $ 
is tight in $ C([0,T];\S ^{\N } )$ and, any limit point 
$ \mathbf{X} = (X^i)_{i\in\N } $ of $ \{ \mathbf{X}^{\nN } \} _{\nN \in \N } $ 
is a solution of the ISDE 
\begin{align}\label{:41b}&
dX_t^i = \sigma (X_t^i,\mathsf{X}_t^{\diai }) dB_t^i + 
\{\bbb (X_t^i,\mathsf{X}_t^{\diai })+ 
\btail (X_t^i)\} dt 
.\end{align}
\end{thm}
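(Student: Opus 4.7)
The plan is threefold: prove tightness of each finite-dimensional marginal of $\{\mathbf{X}^{\nN}\}$ via a Kolmogorov-type criterion, extract a subsequential limit by a diagonal argument, and then pass to the limit in SDE \eqref{:20m} term by term using the decomposition \eqref{:41w}.

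First I would establish tightness of $\{(X^{\nN,1},\ldots,X^{\nN,m})\}_{\nN}$ in $C([0,T];\S^m)$ for every fixed $m$. The maximum-module bound \eqref{:40x} allows one to restrict attention to the event $\{\overline{\mathbf{X}}^{\nN,m}\le a\}$, on which the fourth-moment estimate \eqref{:40z} and Kolmogorov's continuity criterion give equi-continuity of the paths; sending $a\to\infty$ closes tightness. A diagonal extraction then produces a subsequence along which all $m$-marginals converge simultaneously, and Skorohod representation allows me to realize the convergence $\mathbf{X}^{\nN,m}\to\mathbf{X}^m$ almost surely on a common probability space together with the driving Brownian motions.

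To identify the limit I pass to the limit in each piece of \eqref{:20m}. The diffusion part is handled by \As{H4}: $\sN\to\sigma$ uniformly on $\Sr\times\SS$, combined with the label bound \eqref{:40p} ensuring that only finitely many particles enter $\Sr$, yields convergence of $\int_0^\cdot \sN(X_u^{\nN,i},\XNidu)\,dB_u^i$ to $\int_0^\cdot \sigma(X_u^i,\Xidu)\,dB_u^i$ in probability. For the drift I insert
\begin{align*}
\bN = \bNrsp + \btail + \{\bNrs - \bNrsp\} + \{\bNrstail - \btail\}
\end{align*}
and take limits in the order $\nN\to\infty$, then $\p\to\infty$, then $s\to\infty$, with $r$ fixed. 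The cut-off drift $\bNrsp$ is equi-Lipschitz in $x$ by \eqref{:41U} and converges uniformly to $\bbb_{r,s,\p}$ on $\S\times\SS_r^m$ by \eqref{:41r}, so combined with the almost sure convergence $\mathbf{X}^{\nN,m}\to\mathbf{X}^m$ and bounded convergence, $\int_0^t\bNrsp(X_u^{\nN,i},\XNidu)\,du\to\int_0^t\bbb_{r,s,\p}(X_u^i,\Xidu)\,du$. The bracketed remainders are controlled by the $L^{\phat}$ bounds \eqref{:41t} and \eqref{:41v}, and after passing $\nN\to\infty$ the limit-level analogues \eqref{:41tt}, \eqref{:41z}, together with the consolidations \eqref{:41s}, \eqref{:41u}, and \eqref{:41a}, permit sending $\p,s\to\infty$ to produce the limit drift $\bbb(X_u^i,\Xidu)+\btail(X_u^i)$.

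The main obstacle is the near-diagonal singularity of $\bN$ at $\{x=y_i\}$, where the coefficient itself may blow up; this is precisely why the auxiliary cut-offs $\bNrsp$, supported outside the $2^{-\p}$-neighborhood of the diagonal by \eqref{:40F}, have been introduced. The error induced by removing this neighborhood must be controlled uniformly in $\nN$, which requires transferring the $L^{\phat}(\muNonebar)$-bounds of \As{I2}--\As{I3} into pathwise time-integral bounds along the random trajectories; this transfer is supplied by \eqref{:40i} and \eqref{:41t}. A secondary subtlety is the tail: \As{I4} provides the Campbell-level convergence of $\bNrstail$ to the $r$-independent limit $\btail$, while \eqref{:41v} makes this pathwise uniform in $\nN$, letting the $\btail(X_u^i)$ term survive unchanged in the limit and yielding the asserted ISDE \eqref{:41b}.
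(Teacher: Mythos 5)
Your overall strategy coincides with the paper's: Kolmogorov tightness from \eqref{:40x} and \eqref{:40z}, diagonal extraction, and passage to the limit through the decomposition \eqref{:41w} in the order $\nN\to\infty$, then $\p\to\infty$, then $s\to\infty$, with each remainder controlled by the pathwise $L^{\phat}$ bounds of \As{I2}--\As{I5}. Two points, however, are genuine gaps rather than routine omissions. First, you treat \eqref{:41a} as an available hypothesis, but it is not one of the assumptions: the very existence of the limit drift $\bbb $ appearing in \eqref{:41b} must be proved (this is the paper's \lref{l:50}). The argument is not automatic: one first shows that $\brsp \to \brs $ in $L^{\phat}(\muonebar )$ as $\p \to\infty$ by a Cauchy argument combining \eqref{:41r} with \eqref{:41s}, and then observes that $\brs = \mathrm{E}^{\muonebar }[\, \bbb _{r,s+1}\,|\,\mathcal{F}_{r,s}]$, so that $\{\brs \}_{s>r}$ is an $L^{\phat}$-bounded martingale in $s$; the martingale convergence theorem then yields $\bbb $. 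Without this step the ISDE \eqref{:41b} is not even well posed.

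Second, your treatment of the martingale part is too loose. After a Skorohod representation the driving Brownian motions $B^{N,i}$ are themselves part of the converging sequence, so ``convergence in probability of $\int_0^{\cdot}\sN \, dB^i$ to $\int_0^{\cdot}\sigma\, dB^i$'' with a fixed $B^i$ is not the correct assertion. One must instead show that the quadratic variations of the martingale parts converge (using the uniform convergence $\aN \to \aaa $ from \As{H4}), identify the limit as a continuous local martingale with the correct bracket, and then represent it as $\int_0^{\cdot}\sigma\, d\hat{B}^i$ for a \emph{new} Brownian motion $\hat{B}$; this is exactly what \lref{l:55} does. Related to this, the paper avoids Skorohod representation entirely by applying It\^{o}'s formula to test functions $\hhh \in C_0^{\infty}(\S ^m)$ and passing to the limit through a continuous functional $F$ on path space; the coordinate function is recovered only at the end via $\hhh =\hhh _R$ together with the stopping times $\tau_R$. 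Since the coordinate map is unbounded, some such localization is needed in your version as well before you can let $\p ,s\to\infty$ using \eqref{:41tt} and \eqref{:41z}.
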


\begin{rem}\label{r:49}
If diffusion processes are symmetric, we can dispense with 
\eqref{:40z}, \eqref{:40i}, \eqref{:41t}, \eqref{:41tt}, 
\eqref{:41v}, and \eqref{:41z} as we see in Subsection \ref{s:23}. 
Indeed, using the Lyons-Zheng decomposition 
we can derive these from {\em static} conditions 
\As{H4}, \eqref{:40h}, \eqref{:41r}, \eqref{:41s}, \eqref{:41u}, and \eqref{:41a}.  
We remark that we can apply \tref{l:21} to non-symmetric diffusion processes by assuming these {\em dynamical} conditions. 
\end{rem}

\subsection{Finite-particle approximations (reversible case)}\label{s:23}

For a subset $ A $, we set $ \map{\pi_{A}}{\SS }{\SS } $ by 
 $ \pi_{A} (\mathsf{s}) = \mathsf{s} (\cdot \cap A )$. 
 We say a function $ f $ on $ \SS $ is local if 
$ f $ is $ \sigma [\pi_{K}]$-measurable for some compact set $ K $ in $ \S $. 
 For a local function $ f $ on $ \SS $, we say $ f $ is smooth if $ \check{f}$ is smooth, where 
 $ \check{f}(x_1,\ldots )$ is a symmetric function such that 
 $ \check{f}(x_1,\ldots ) = f (\mathsf{x})$ for $ \mathsf{x} = \sum _i \delta _{x_i}$. 
 Let $ \di $ be the set of all bounded, local smooth functions on $ \SS $. 
We write $ f \in L_{\mathrm{loc}}^p(\muone )$ if 
$ f \in L^p(\Sr \ts \SS , \muone )$ for all $ r \in\N $. 
Let 
$ C_{0}^{\infty}(\S )\ot \di = \{ \sum_{i=1}^Nf_i (x) g_i (\mathsf{y})\, ;\, f_i \in C_{0}^{\infty}(\S ),\, g_i \in \di ,\, N \in \N \} $  
denote the algebraic tensor product of $ C_{0}^{\infty}(\S ) $ and $\di $. 

\begin{dfn}\label{d:21}
A $ \Rd $-valued function $ \dmu \in L_{\mathrm{loc}}^1(\muone )$ is called 
 {\em the logarithmic derivative} of $\mu $ if, for all 
$\varphi \in C_{0}^{\infty}(\S )\ot \di $, 
\begin{align}&\notag 
 \int _{\S \times \SS } 
 \dmu (x,\mathsf{y})\varphi (x,\mathsf{y}) 
 \muone (dx d\mathsf{y}) = 
 - \int _{\S \times \SS } 
 \nablax  \varphi (x,\mathsf{y}) \muone (dx d\mathsf{y}) 
.\end{align}
\end{dfn}

\begin{rem}\label{r:21}
\thetag{1} 
The logarithmic derivative $ \dmu $ is determined uniquely (if exists). 
\\\thetag{2}
If the boundary $ \partial \S $ is nonempty and particles 
hit the boundary, then $ \dmu $ would contain a term arising from the boundary condition. 
 For example, if the Neumann boundary condition is imposed on the boundary, 
 then there would be local time-type drifts. 
We shall later assume that particles never hit the boundary, 
and the above formulation is thus sufficient in the present situation. 
\\
\thetag{3} 
A sufficient condition for the explicit expression of 
the logarithmic derivative of point processes is given in \cite[Theorem 45]{o.isde}. 
Using this, one can obtain the logarithmic derivative of point processes appearing in random matrix theory such as sine$_{\beta } $, Airy$_{\beta }, $ ($ \beta=1,2,4$), Bessel$_{2,\alpha } $ ($ 1\le \alpha $), 
and the Ginibre point process (see Examples in \sref{s:5}). For canonical Gibbs measures with Ruelle-class interaction potentials, one can easily calculate the logarithmic derivative employing DLR equation \cite[Lemma 10.10]{o-t.tail}. 
\end{rem}

We assume:

\bs 

\noindent 
\As{J1} 
Each $ \muN $ has a logarithmic derivative $ \dmuN $, 
and the coefficient $ \bbb ^{\nN } $ is given as 
\begin{align}\label{:21j}&
\bbb ^{\nN }=\frac{1}{2}\{\nablax \aN + \aN \dmuN \}
.\end{align}
Furthermore, the vector-valued functions $ \{\nablax \aN \}_{\nN }$ are continuous and converge to $ \nablax \aaa $ uniformly on each $ \Sr \ts \SS $, where 
$ \nablax \aN $ is the $ d $-dimensional column vector such that 
\begin{align}\label{:21k}&
\nablax \aN (x,\mathsf{y}) = 
{}^{t}\Big( 
\sum_{i=1}^d \PD{}{x_i}\aN _{1i} (x,\mathsf{y}),\ldots,
\sum_{i=1}^d \PD{}{x_i}\aN _{di} (x,\mathsf{y}) 
\Big)
.\end{align}

\begin{rem}\label{r:26} 
From \As{J1} we see that the delabeled dynamics 
$ \mathsf{X}^{\nN } = \sum_{i=1}^{\nN } \delta_{X^i}$ 
of $ \mathbf{X}^{\nN } $ is reversible with respect to $ \muN $. 
Thus \As{J1} relates the measure $ \muN $ 
with the labeled dynamics $ \mathbf{X}^{\nN } $. 
For each $ \nN < \infty $, $\XX ^{\nN }$ has a reversible measure. 
Indeed, the symmetrization $ (\muN \circ \labN ^{-1})_{\mathrm{sym}}$ 
of $\muN \circ \labN ^{-1} $ is a reversible measure of $\XX ^{\nN }$ 
as we see for $ \muNcheck $ in Introduction, where 
$ (\muN \circ \labN ^{-1})_{\mathrm{sym}}= \frac{1}{\nN ! }
\sum_{\sigma \in \mathfrak{S}_N}(\muN \circ \labN ^{-1})\circ \sigma ^{-1} $ and 
$ \mathfrak{S}_N$ is the symmetric group of order $ N $. 
When $ \nN = \infty $, $ \mathbf{X}$ does not have any reversible measure 
in general. For example, infinite-dimensional Brownian motion $ \mathbf{B}=(B^i)_{i\in\N }$ on $ (\Rd )^{\N }$ has no reversible measures. 
We also remark that the Airy$_{\beta }$ ($ \beta = 1,2,4$) interacting Brownian motion 
defined by \eqref{:10H} has a reversible measure given by 
$  \mu _{\mathrm{Airy}, \beta }\circ\lab ^{-1}$ 
 with label $ \lab (\mathsf{s}) = (s_1,s_2,\ldots)$ such that $ s_i > s_{i+1}$ 
for all $ i \in \N $ because $ \lab $ gives a bijection from (a subset of) $ \SS $ to $ \R ^{\N }$ defined for $  \mu _{\mathrm{Airy}, \beta } $-a.s.\  $ \mathsf{s}$, and thus the relation 
$ \mathbf{X}_t = \lab (\mathsf{X}_t)$ holds for all $ t $. 
\end{rem}

We prove that convergence of the logarithmic derivative 
implies weak convergence of the solutions of the associated SDEs. 
Each logarithmic derivative $ \dmuN $ belongs to a different $ L^p $-space 
$ L^p (\muNone )$, and $ \muNone $ are mutually singular. 
Hence we decompose $ \dmuN $ to define a kind of $ L^p $-convergence.

Let $ \map{\uu ,\ \uN ,\ \ww }{\S }{\Rd }$ and 
$\map{\g ,\, \gN ,\, \vvv ,\, \vN }{\S ^{2} }{\Rd }$ 
 be measurable functions. We set 
\begin{align}
\label{:21l}&
\ggs (x,\mathsf{y})= \vs + \sum_{i} \chi _s (x-y_i) \g (x,y_i)
,\\&\notag 
\ggNs (x,\mathsf{y})= \vNs + \sum_{i} \chi _s (x-y_i) \gN (x,y_i)
,\\&\notag 
\rrNs (x,\mathsf{y})= \vNsinfty + \sum_{i}(1- \chi _s (x-y_i))\gN (x,y_i) 
,\end{align}
where $ \mathsf{y}=\sum_{i}\delta_{y_i}$ and 
$ \chi _s \in C_0^{\infty} (\S )$ is a cut-off function such that 
$ 0 \le \chi _s \le 1 $, $ \chi _s (x) = 0 $ for $ |x| \ge s + 1 $, and 
$ \chi _s (x) = 1$ for $ |x| \le s $. 
We assume the following. 

\medskip

\noindent 
\As{J2} Each $ \muN $ has a logarithmic derivative $ \dmuN $ such that 
\begin{align}\label{:21m}&
\dmuN (x,\mathsf{y})= \uN (x) + \ggNs (x,\mathsf{y}) + \rrNs (x,\mathsf{y})
.\end{align}
Furthermore, we assume that 
\begin{itemize}

\item[(1)]
$\uN $ are in $ C^1 (\S ) $. Furthermore, 
$ \uN $ and $ \nabla \uN $ converge uniformly to $ u $ and $ \nabla u $, respectively, on each compact set in $ \S $.

\item[(2)] 
For each $ s \in \N $, $\vNs $ are in $ C^1 (\S ) $. 
Furthermore, functions 
$\vNs $ and $ \nablax \vNs $ converge uniformly to $\vs $ and 
$ \nablax \vs $, respectively, on each compact set in $ \S $. 

\item[(3)]
$ g^{\nN }$ are in $ C^1 (\S ^2 \cap \{x \not= y \}) $. 
Furthermore, 
$ g^{\nN }$ and $\nablax  g^{\nN }$ converge uniformly to 
$ g $ and $\nablax  g $, respectively, on $ \S ^2 \cap \{ |x-y| \ge 2^{-\p } \} $ for each $ \p >0 $. 
In addition, for each $ r \in \N $, 
\begin{align}\label{:21q}&
\limi{\p } \limsupi{\nN } 
\int_{x \in \Sr , |x-y| \le 2^{-\p } } 
\chi _s (x-y) | \gN (x,y) | ^{\phat }\, \rho _x^{\nN ,1}(y) dxdy = 0 
,\end{align}
where $ \rho _x^{\nN ,1} $ is a one-correlation function of the reduced Palm measure $ \mu _x^{\nN }$. 

\item[(4)] There exists a continuous function $ \map{w }{\S }{\R }$ such that 
\begin{align}\label{:21r}& 
\limi{s}\limsupi{N} 
 \int_{\Sr \times \SSS } |\rrNs (x,\mathsf{y}) - \www |^{\phat } d\muNone 
= 0 , \quad \ww \in L^{\phat }_{\mathrm{loc}} (\S ,dx)
.\end{align}
\end{itemize}

Let $ p $ be such that $ 1 < p < \phat $. 
Assume \As{H1} and \As{J2}. 
Then from \cite[Theorem 45]{o.isde} we see that 
the logarithmic derivative $\dmu $ of $ \mu $ exists in 
$ \Lploc (\muone )$ and is given by 
\begin{align} \label{:21v}
\dmu (x,\mathsf{y})= 
\uu (x) + \mathsf{g} (x,\mathsf{y}) + \www 
.\end{align}
Here $ \mathsf{g} (x,\mathsf{y})= \limi{s} \ggs (x,\mathsf{y}) $ and 
the convergence of $ \lim \ggs $ takes place in $ \Lploc (\muone )$. 
We now introduce the ISDE of $ \mathbf{X}=(X^i)_{i\in\N }$: 
\begin{align}\label{:21w}&
dX_t^i = \sigma (X_t^i,\mathsf{X}_t^{\diai }) dB_t^i 
 + 
\frac{1}{2} \{
\nablax  \aaa (X_t^i,\mathsf{X}_t^{\diai }) 
+ 
\aaa (X_t^i,\mathsf{X}_t^{\diai })
\dmu (X_t^i,\mathsf{X}_t^{\diai }) 
\} dt 
\\\label{:21W}&
\mathbf{X}_0 = \mathbf{s}
.\end{align}
Here $ \nablax  \aaa $ is defined similarly as \eqref{:21k}. 
If $ \sigma $ is the unit matrix and \As{J2} is satisfied, we have 
\begin{align}\label{:21x}
dX_t^i &= dB_t^i 
+ \frac{1}{2} \{ 
u (X_t^i) + w (X_t^i) 
 + 
\mathsf{g} (X_t^i,\mathsf{X}_t^{\diai }) 
\} dt 
.\end{align}
In the sequel, we give a sufficient condition 
for solving ISDE \eqref{:21w} (and \eqref{:21x}). 

Let $ \mathbb{D}$ be the standard square field on $ \SS $ such that 
for any $ f , g \in \di $ and $ \mathsf{s}=\sum_i\delta_{s_i}$
\begin{align*}&
\mathbb{D}[f,g] (\mathsf{s})= 
\half \{\sum_i \nabla _i{\check{f}} \cdot \nabla _i{\check{g}}\} \, 
 (\mathsf{s})
,\end{align*}
where $ \cdot $ is the inner product in $ \Rd $. Since the function 
$ \sum_i 
\nabla _i{\check{f}} (\mathbf{s})\cdot \nabla _i{\check{g}} (\mathbf{s})$, 
where $ \mathbf{s}=(s_i)_i$ and $ \mathsf{s}= \sum_i \delta_{s_i}$, 
is symmetric in $ (s_i)_i$, we regard it as a function of $ \mathsf{s}$. 
We set $ \Lm = L^2(\SS ,\mu )$ and let 
\begin{align*}&
\mathcal{E}^{\mu }(f,g) = \int_{\SS } \mathbb{D}[f,g] (\mathsf{s}) \mu (d\mathsf{s}), 
\\ & 
 \di ^{\mu } =\{ f \in \di \cap \Lm \, ;\, 
\mathcal{E}^{\mu }(f,f) < \infty \} 
.\end{align*}
We assume: 

\ms 

\noindent 
\As{J3} \quad 
$ (\mathcal{E}^{\mu }, \mathcal{D}_{\circ }^{\mu } )$ is closable on $ \Lm $. 

\bs 

From \As{J3} and the local boundedness of correlation functions given by 
\As{H1}, we deduce that 
the closure $ (\mathcal{E}^{\mu }, \mathcal{D}^{\mu } )$ 
of $ (\mathcal{E}^{\mu }, \mathcal{D}_{\circ }^{\mu } )$ 
becomes a quasi-regular Dirichlet form \cite[Theorem 1]{o.dfa}. 
Hence, using a general theory of quasi-regular Dirichlet forms, 
we deduce the existence of 
the associated $ \SS $-valued diffusion $ (\mathsf{P},\mathsf{X})$ \cite{mr}. 
By construction, 
$ (\mathsf{P},\mathsf{X})$ is $ \mu $-reversible. 

If one takes $ \mu $ as Poisson point process with Lebesgue intensity, then the diffusion $ (\mathsf{P},\mathsf{X})$ thus obtained 
is the standard $ \SS $-valued Brownian motion $ \mathsf{B}$ such that 
$ \mathsf{B}_t=\sum_{i\in\N } \delta_{B_t^i}$, where 
$ \{ B^i \}_{i\in\N } $ are independent copies of the standard Brownian motions on $ \Rd $. This is the reason why we call $ \mathbb{D}$ the standard square field.

\bigskip

Let $ \mathrm{Cap}^{\mu }$ denote the capacity given by the Dirichlet space 
$ (\mathcal{E}^{\mu }, \mathcal{D}^{\mu }, \Lm )$ \cite{FOT.2}. 
Let 
\begin{align}& \notag 
\SSSsi =
\{ \sss \in \SSS \, ;\, \, \sss (x)\le 1 \text{ for all }x \in \S ,\, \, 
\sss (\S )= \infty \} 
\end{align}
and assume:

\medskip
\noindent
\As{J4}
$\mathrm{Cap}^{\mu } (\{\SSSsi \}^c) = 0 $. 

\ms 

\noindent 
Let $\erf (t)=\frac{1}{\sqrt{2\pi }} \int_t^\infty e^{ - {|x|^2}/{2}}\,dx$ be the error function. 
Let $ \Sr = \{ |x| < r \} $ as before. We assume: 

\medskip

\noindent
\As{J5}
There exists a $ Q >0 $ such that for each $ R >0 $ 
\begin{align}\label{:21t}
\liminfi{r} \sup_{\nN \in\N } 
\Bigl\{ \int_{\S _{r+R}} \rho ^{\nN ,1} (x)\,dx 
\Bigr\} \erf \Bigl(\frac{r}{\sqrt{(r+R ) Q }}\Bigr)= 0
.\end{align}
We write $ s_i=\labN (\mathsf{s})_i$ and assume for each $ r \in \N $
\begin{align}\label{:21T}&
 \limi{L} \limsupi{\nN } \sum_{i>L} 
\int_{\SS } \mathrm{Erf} (\frac{|s_i|-r}{\sqrt{\cref{;3}}T}) 
\muN (d\mathsf{s}) = 0 
.\end{align}

We remark that \eqref{:21T} is easy to check. Indeed, 
we prove in \lref{l:67} that, if $ s_i=\labN (\mathsf{s})_i$ is taken 
such that 
\begin{align}\label{:25g}&
|s_1|\le |s_2|\le \cdots 
,\end{align}
then \eqref{:21T} follows from \As{H1} and \eqref{:25f} below. 
\begin{align}\label{:25f}& \quad 
\limi{q}\limsupi{N} \int_{\S \backslash \S _{q}} 
\mathrm{Erf} (\frac{|x|-r}{\sqrt{\cref{;3}}T}) \rho^{\nN , 1} (x) dx =0 
.\end{align}

Let $ \lab $ be the label as before. 
Let $ \mathbf{X} = (X^i)_{i\in\N }$ be a family of solution of 
\eqref{:21w} satisfying 
$ \mathbf{X}_0 = \mathbf{s} $ for $ \mu \circ \lab ^{-1}$-a.s.\! $ \mathbf{s}$. 
We call $ \mathbf{X}$ satisfies $ \mu $-absolute continuity condition if 
\begin{align}\label{:21y}&
\mu _t \prec \mu \quad \text{ for all } t \ge 0 
,\end{align}
where $ \mu _t $ is the distribution of $ \mathsf{X}_t $ and 
$ \mu _t \prec \mu $ means $ \mu _t $ 
is absolutely continuous with respect to $ \mu $. 
Here $ \mathsf{X}_t = \sum_{i\in\N } \delta_{X_t^i}$, 
for $ \mathbf{X}_t = (X_t^i)_{i\in\N }$. By definition 
$ \mathsf{X} =\{ \mathsf{X}_t \} $ is the delabeled dynamics of 
$ \mathbf{X}$ and by construction $ \mathsf{X}_0 = \mu $ in distribution. 

We say ISDE \eqref{:21w} has $ \mu $-uniqueness of solutions in law 
if $ \mathbf{X}$ and $ \mathbf{X}'$ are solutions with the same initial distributions 
 satisfying the $ \mu $-absolute continuity condition, then 
 they are equivalent in law. 
We assume: 

\medskip 

\noindent 
\As{J6} 
 ISDE \eqref{:21w} has $ \mu $-uniqueness of solutions in law.

\medskip 

Let $ \mathbf{X}^{N} $ be a solution of \eqref{:20m}. From \eqref{:21j} 
we can rewrite \eqref{:20m} as 
\begin{align}\label{:21z}&
dX_t^{N,i} = \sigmaN (X_t^{N,i} ,\mathsf{X}_t^{\nN , \diai }) dB_t^i 
 + 
\frac{1}{2} \{
\nablax  \aN 
+ 
\aN 
\dmuN 
\} (X_t^{N,i} ,\mathsf{X}_t^{\nN , \diai }) dt 
.\end{align} 
We set $ \mathbf{X}^{\nN , m }=  (X^{\nN ,1},X^{\nN ,2},\ldots,X^{\nN ,m}) $ 
$ 1 \le m \le \nN $ and 
$ \mathbf{X}^{m}= (X^{1},X^{2},\ldots,X^{m})$. 
We say $ \{ \mathbf{X}^{\nN } \} $ is tight in $ C([0,\infty );\S ^{\N })$ 
if each subsequence $ \{ \mathbf{X}^{\nN '} \} $ contains a subsequence 
$ \{ \mathbf{X}^{\nN ''} \} $ such that 
$ \{ \mathbf{X}^{\nN '',m} \} $ is convergent weakly in $ C([0,\infty );\S ^m )$ 
for each $ m \in \N $. 

\begin{thm}\label{l:22}
Assume \As{H1}--\As{H4} and \As{J1}--\As{J5}. 
Assume that $  \mathbf{X}_0^{\nN }= \muN \circ \labN ^{-1} $ in distribution. 
Then $ \{ \mathbf{X}^{\nN } \} $ is tight in $ C([0,\infty );\S ^{\N })$ and 
each limit point $ \mathbf{X}$ of $ \{ \mathbf{X}^{N} \} $ is a solution of 
\eqref{:21w} with initial distribution $ \mul $. 
Furthermore, if we assume \As{J6} in addition, then for any $m \in \mathbb{N}$ 
\begin{align}\label{:21a}
\limi{\nN } \mathbf{X}^{\nN , m } = \mathbf{X}^m 
 \quad\text{ weakly in } C([0,\infty) ,\S ^m)
.\end{align}
\end{thm}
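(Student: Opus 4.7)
The plan is to deduce \tref{l:22} from \tref{l:21} by verifying that \0\ all follow from \As{J1}--\As{J5}. By \As{J1} the drift has the form $\bbb^{\nN}=\tfrac{1}{2}\{\nabla_x\aN+\aN\dmuN\}$, and substituting the cutoff decomposition \eqref{:21m} of $\dmuN$ produces a natural splitting of $\bbb^{\nN}$ into (i) a near-field part whose conditional expectation over $\mathcal{F}_{r,s}$ gives $\bNrs$ and $\bNrsp$, and (ii) a tail part converging to $\btail=\tfrac{1}{2}\{\nabla_x\aaa+\aaa(u+w)\}$, which depends only on $x$ as \As{I4} requires. The static convergences \eqref{:41r}, \eqref{:41U}, \eqref{:41s}, \eqref{:41u}, \eqref{:41a} then follow from \As{H4} together with the uniform convergence of $\uN$, $\vNs$, $\gN$ on compacts away from the diagonal, the diagonal smallness \eqref{:21q}, and the tail control \eqref{:21r}. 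Moreover, by \cite[Theorem 45]{o.isde} the logarithmic derivative $\dmu$ of $\mu$ exists in $\Lploc(\muone)$ and admits the representation \eqref{:21v}, so the ISDE \eqref{:41b} produced by \tref{l:21} coincides with the target ISDE \eqref{:21w}.

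The main obstacle is to promote the static $L^{\phat}(\muNone_r)$ bounds to the pathwise integrals in \eqref{:40z}, \eqref{:40i}, \eqref{:41t}, \eqref{:41tt}, \eqref{:41v}, and \eqref{:41z}. Here reversibility is decisive: because $\mathsf{X}^{\nN}$ is $\muN$-stationary (from \As{J1}) with initial law $\muN$,
\begin{equation*}
\sum_{i=1}^{\nN}\mathrm{E}^{\muNl}\Bigl[\int_0^T f(X_t^{N,i},\XNidt)\,dt\Bigr]
= T\int_{\S\times\SS} f\,d\muNone
\end{equation*}
for any Borel $f\ge 0$; combining this with the label truncation \eqref{:40p}, which itself follows from \As{J5} via \lref{l:67} and \eqref{:21t}--\eqref{:25f}, yields the single-index bounds \eqref{:40i}, \eqref{:41t}, \eqref{:41v}, \eqref{:41z}, and the limit analogue \eqref{:41tt} via the same identity with $\mu$ in place of $\muN$. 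The fourth-moment modulus \eqref{:40z} is obtained from the Lyons-Zheng decomposition applied to the reversible diffusion: $X_t^{N,i}-X_u^{N,i}$ is the half-sum of a forward martingale and a time-reversed backward martingale, each with quadratic variation $\int\aN\,ds$, so the Burkholder-Davis-Gundy inequality combined with the uniform bound $\|\aN\|\le\cref{;3}$ from \eqref{:20o} gives $\mathrm{E}[|X_t^{N,i}-X_u^{N,i}|^4]\le C|t-u|^2$; the confinement \eqref{:40x} likewise follows from \As{H1}, \As{H2}, and \As{J5}.

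Once \As{I1}--\As{I5} are verified, \tref{l:21} yields tightness and a subsequential limit $\mathbf{X}$ solving \eqref{:21w}. To apply \As{J6}, it remains to verify the $\mu$-absolute continuity \eqref{:21y} of the limit, which holds because $\mathsf{X}^{\nN}_t$ has marginal law $\muN$ for every $t$ by stationarity, so \eqref{:10g} yields $\mathsf{X}_t\sim\mu$ for every $t$ in the weak limit. \As{J6} then forces all subsequential limits to agree, upgrading tightness to the full convergence \eqref{:21a}.
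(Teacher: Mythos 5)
Your argument is essentially the paper's own proof: \As{I1} is obtained via the Lyons--Zheng decomposition together with the time-change and $\erf$ estimates (\lref{l:62}, \lref{l:63}), \As{I2}--\As{I5} by summing over $i$ and invoking the $\muNone$-stationarity of the one-labeled diffusion to reduce the pathwise integrals to the static $L^{\phat}(\muNone )$ bounds (\lref{l:64}, \lref{l:65}), after which \tref{l:21} and \As{J6} finish the proof exactly as in the paper. The only slip is the identification $\btail =\tfrac12\{\nablax \aaa +\aaa (u+w)\}$: since $\uN $ and $\nablax \aN $ are $\mathcal{F}_{\rrr ,s}$-measurable on $\Sr $, they are retained in $\bNrs =\mathrm{E}^{\muNonebar }[\bbb ^N|\mathcal{F}_{\rrr ,s}]$ rather than in the tail remainder $\6 $, so only the $w$-contribution (roughly $\tfrac12\aaa w$) survives in $\btail $ --- a harmless misattribution that does not affect the verification of \As{I4}.
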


 \begin{rem}\label{r:21a} 
 To prove \eqref{:21a} it is sufficient to prove the convergence in $ C([0,T];\S ^m )$ for each $ T \in \N $. We do this in the following sections. 
 \end{rem}
%

%

\begin{rem}\label{r:k4} \thetag{1} 
A sufficient condition for \As{J3} is obtained in \cite{o.rm, o.rm2}. 
Indeed, if $ \mu $ is a $ (\Phi ,\Psi )$-quasi-Gibbs measure 
with upper semi-continuous potential $ (\Phi ,\Psi )$, 
then \As{J3} is satisfied. This condition is mild and is satisfied by 
all examples in the present paper. 
We refer to \cite{o.rm, o.rm2} for the definition of quasi-Gibbs property. 
\\
\thetag{2}
From the general theory of Dirichlet forms, 
we see that \As{J4} is equivalent to the non-collision of particles \cite{FOT.2}. 
We refer to \cite{inu} for a necessary and sufficient condition of this non-collision property of interacting Brownian motions in finite-dimensions, which gives a sufficient condition of non-collision in infinite dimensions. 
We also refer to \cite{o.col} for a sufficient condition for non-collision property of interacting Brownian motions in infinite-dimensions applicable to, in particular, determinantal point processes. 
\\\thetag{3} 
From \eqref{:21t} of \As{J5}, we deduce that each tagged particle $ X^i $ 
does not explode \cite{FOT.2,o.tp}. We remark that the delabeled dynamics 
$ \mathsf{X}=\sum_i \delta_{X^i}$ are $ \mu $-reversible, and they thus 
never explode. Indeed, as for configuration-valued diffusions, explosion occurs if and only if infinitely many particles gather in a compact domain, 
so the explosion of tagged particle does not imply that of the configuration-valued process. 
\\\thetag{4} 
It is known that, if we suppose \As{H1}, \As{J1}--\As{J5}, then ISDE \eqref{:21w} has a solution for $ \mul $-a.s. $ \mathbf{s}$ satisfying 
the non-collision and non-explosion property \cite{o.isde}. 
Indeed, let $ \mathbf{X}=(X^i)$ be the $ \SN $-valued continuous process 
consisting of 
tagged particles $ X^i$ of the delabeled diffusion process $ \mathsf{X}=\sum_{i\in\N } \delta_{X^i}$ given by the Dirichlet form of \As{J3}. 
Then from \As{J4} and \As{J5} \eqref{:21t} we see 
$ \mathbf{X}$ is uniquely determined by its initial starting point. 
It was proved that $ \mathbf{X}$ is a solution of 
\eqref{:21w} in \cite{o.isde}. 
\end{rem}

\begin{rem}\label{r:K5} 
Assumption \As{J6} follows from tail triviality of $ \mu $ \cite{o-t.tail}, where tail triviality  of $ \mu $ means that the tail $ \sigma $-field 
$ \mathcal{T}=\bigcap_{r=1}^{\infty} \sigma [\pi _{\Sr ^c}]$ is $ \mu $-trivial. 
Indeed, from tail triviality of $ \mu $ and marginal assumptions (\As{E1}, \As{F1}, and \As{F2} in \cite{o-t.tail}), we obtain \As{J6}. 
Tail triviality holds for all determinantal point processes \cite{o-o.tt} and 
grand canonical Gibbs measures with sufficiently small inverse temperature 
$ \beta > 0 $. 
\end{rem}

\section{Proof of \tref{l:21}}\label{s:3} 

The purpose of this section is to prove \tref{l:21}. 
We assume the same assumptions as \tref{l:21} 
throughout this section. We begin by proving \eqref{:41a}. 

\begin{lem} \label{l:50} 
\eqref{:41a} holds. 
\end{lem}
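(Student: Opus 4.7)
The plan is to exploit the decomposition $\bN = \bNrs + \bNrstail$ of \eqref{:40a} together with the tail-control assumption \eqref{:41u}. For any $r < s < s'$, subtracting two instances of this decomposition gives
\begin{align*}
\bNrs - \bbb_{r,s'}^{\nN} = \bbb_{r,s'}^{\nN,\tail} - \bNrstail,
\end{align*}
and inserting $\btail$ as a pivot yields
\begin{align*}
\|\bNrs - \bbb_{r,s'}^{\nN}\|_{L^{\phat}(\muNone_r)} \le \|\bNrstail - \btail\|_{L^{\phat}(\muNone_r)} + \|\bbb_{r,s'}^{\nN,\tail} - \btail\|_{L^{\phat}(\muNone_r)}.
\end{align*}
Taking $\limsup_{\nN \to \infty}$ and invoking \eqref{:41u}, the right-hand side vanishes as $s, s' \to \infty$. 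Hence $\{\bNrs\}_s$ is Cauchy in $L^{\phat}(\muNone_r)$ uniformly in $\nN$, in the $\limsup_\nN$ sense.

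Next I would pass to the $\nN \to \infty$ limit with $s$ held fixed. By \eqref{:50l}, $\bNrs(x,\mathsf{y}) \to \brs(x,\mathsf{y})$ pointwise on the non-collision set $\bigcup_{\p}(\S \ts \SS)_{r,\p}^c$, which is of full $\muone_r$-measure (the diagonal collision set has zero Lebesgue, hence zero Palm, measure under \As{H1}). Since $\bNrs = \mathrm{E}^{\muNonebar}[\bN \mid \mathcal{F}_{r,s}]$ is a conditional expectation, its $L^{\phat}(\muNonebar)$ norm is dominated by that of $\bN$, so $\|\bNrs\|_{L^{\phat}(\muNone_r)}$ is uniformly bounded in both $\nN$ and $s$ by \eqref{:40h}. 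Combined with the weak convergence of Campbell measures $\muNone_r \to \muone_r$ implied by \As{H1}, a Vitali/uniform-integrability argument upgrades the pointwise convergence to $\bNrs \to \brs$ in $L^{\phat}(\muone_r)$. Letting $\nN \to \infty$ in the Cauchy estimate from the first step then gives that $\{\brs\}_s$ is Cauchy in $L^{\phat}(\muone_r)$; by completeness, define $\bbb := \lim_{s \to \infty} \brs$, which establishes \eqref{:41a}. Independence of the choice of $r$ on overlaps comes from the consistency relation \eqref{:40c}: $\bNrs = \bbb_{r+1,s}^{\nN}$ on $\S_r$, and this equality is preserved in the double limit.

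The delicate point is transferring the $\nN$-uniform Cauchy estimate, which lives in the $\nN$-dependent, mutually singular spaces $L^{\phat}(\muNone_r)$, to a genuine Cauchy estimate with respect to a single limit measure $\muone_r$. The contractivity of conditional expectation together with the uniform bound \eqref{:40h} and the locally uniform convergence of correlation functions in \As{H1} is exactly what is needed to make this transfer possible; once in hand, the rest of the argument is an essentially formal telescoping via the pivot $\btail$.
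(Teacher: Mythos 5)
Your argument is correct in substance, but it takes a genuinely different route from the paper's. The paper never touches the tail hypothesis \As{I4} in this lemma: after showing $\limi{\p}\brsp = \brs$ in $\Lp$ via \eqref{:41s}, it observes that the tower property $\bbb _{r,s}^{\nN } = \mathrm{E}^{\muNonebar } [ \bbb _{r,s+1}^{N}|\mathcal{F}_{\rrr ,s} ]$ survives the $\nN \to\infty$ limit, so that $ \{ \bbb _{r,s} \}_{s>r} $ is a martingale in $ s $ which is bounded in $L^{\phat}$ by \eqref{:40h}, contractivity of conditional expectation, and Fatou; the martingale convergence theorem then produces $\bbb$. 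You instead extract a Cauchy estimate in $s$ directly from the algebraic identity $\bNrs - \bbb _{r,s'}^{\nN } = \bbb _{r,s'}^{\nN ,\tail } - \bNrstail$ and the pivot $\btail$ supplied by \eqref{:41u}. What each buys: the paper's proof shows \eqref{:41a} already follows from \As{I2}--\As{I3} plus the conditional-expectation structure of \eqref{:40a}, without consuming \As{I4}, and it identifies $\brs = \mathrm{E}^{\muonebar}[\bbb \,|\, \mathcal{F}_{r,s}]$, which is structurally useful; your proof is more elementary (no martingale convergence theorem) and makes transparent that $\bbb$ is the limit of the non-tail part of the drift, but it needs \As{I4}, which is of course available under the standing hypotheses of \tref{l:21}.

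One small repair: to transfer the $\limsup_{\nN}$ Cauchy bound to the limit measure you do not need, and should not claim, convergence $\bNrs \to \brs$ \emph{in} $L^{\phat}$ --- a uniform $L^{\phat}$ bound plus Vitali only yields $L^{p}$ convergence for $p<\phat$. What you actually need is the one-sided Fatou inequality $\| \brs - \bbb_{r,s'}\|_{\Lp } \le \liminfi{\nN } \| \bNrs - \bbb _{r,s'}^{\nN }\|_{\LNp }$, which follows from the pointwise convergence \eqref{:50l} off the ($\muone$-null) collision set together with the locally uniform convergence of the Campbell densities under \As{H1}; this is precisely the device the paper itself uses to obtain \eqref{:50m}, so the gap is cosmetic rather than substantive.
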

\begin{proof}
From \As{H1} and \eqref{:41r}, we obtain 
\begin{align} \label{:50z}&\quad \quad \quad 
\limi{N} \bNrsp = \, \brsp 
\quad \text{ for $ \muonebar $-a.s.\! and in $ \Lp $. }
\end{align}
We next prove the convergence of $ \{ \brsp \} $ as $ \p \to \infty $. 
Note that 
\begin{align}\label{:50i}&
\| \brsp - \brsq \|_{ \Lp }
 \\\le \, & \notag
\| \brsp - \bNrsp \|_{ \Lp } + 
\| \bNrsp - \bNrsq \|_{ \Lp } + 
\| \bNrsq - \brsq \|_{ \Lp }
.\end{align}
From \eqref{:41s} for each $ \epsilon $ there exists a $ \p _0$ such that 
for all $ \p ,\q \ge \p _0 $ 
\begin{align}\label{:50p}&
\supN \| \bNrsp -\bNrsq \|_{ L^{\phat }( \muNone _r ) } < \epsilon 
\end{align}
By \eqref{:50z} there exists an $ \nN = \nN_{\p ,\q }$ such that 
\begin{align}\label{:50q}&
 \| \brsp - \bNrsp \|_{ \Lp } < \epsilon ,\quad 
 \| \brsq - \bNrsq \|_{ \Lp } < \epsilon 
.\end{align}
Putting \eqref{:50p} and \eqref{:50q} into \eqref{:50i}, we deduce that 
$ \{ \brsp \}_{\p \in \N } $ is a Cauchy sequence in $ \Lp $. 
Hence from \eqref{:40g}, \eqref{:41s}, and \eqref{:50l} we see 
\begin{align}\label{:50a}
 \limi{\p } \brsp = \, &\brs \quad \text{ in } \Lp 
.\end{align}

Recall that 
$ \bbb _{r,s}^{\nN } = 
 \mathrm{E}^{\muNonebar } [ \bbb ^{N}|\mathcal{F}_{\rrr ,s} ] $ 
by \eqref{:40a}. Then, because 
$ \mathcal{F}_{\rrr ,s} \subset \mathcal{F}_{\rrr ,s+1}$, we have 
\begin{align}\label{:50J}&
\bbb _{r,s}^{\nN } = 
 \mathrm{E}^{\muNonebar } [ \bbb _{r,s+1}^{N}|\mathcal{F}_{\rrr ,s} ] 
.\end{align}
From $ \bbb _{r,s}^{\nN } = 
 \mathrm{E}^{\muNonebar } [ \bbb ^{N}|\mathcal{F}_{\rrr ,s} ] $ 
we have 
\begin{align}& \notag 
 \| \bbb _{r,s}^{\nN } \|_{\LNp } \le \| \bN \|_{\LNp }
\end{align}
From this and \eqref{:40h} we obtain 
\begin{align}\label{:50X}&
\sup_{r < s } \limsupi{\nN } \| \bbb _{r,s}^{\nN } \|_{\LNp } \le 
\limsupi{\nN } \| \bN \|_{\LNp } < 
\infty 
.\end{align}
Combining \eqref{:50l}, \eqref{:50J} and \eqref{:50X}, we have 
\begin{align}\label{:50n}&
\bbb _{r,s} = 
\limi{\nN}  \bbb _{r,s}^{\nN } =  \limi{\nN} 
\mathrm{E}^{\muNonebar } [\bbb _{r,s+1}^{N}|\mathcal{F}_{\rrr ,s} ] =
\mathrm{E}^{\muonebar } [ \bbb _{r,s+1}|\mathcal{F}_{\rrr ,s} ]
.\end{align}

From \As{H1}, \eqref{:50l}, \eqref{:50X}, and Fatou's lemma, we see that 
\begin{align}\label{:50m}& 
 \sup_{r < s } \| \bbb _{r,s} \|_{\Lp } \le 
 \sup_{r < s } \liminfi{\nN } \| \bbb _{r,s}^{\nN } \|_{\LNp }
 < 
\infty 
.\end{align}
From \eqref{:50n} we deduce that 
$ \{ \bbb _{r,s} \}_{s=r+1}^{\infty} $ is martingale in $ s $. 
Applying the martingale convergence theorem to $ \{\brs \}_{s=r+1}^{\infty}$ and 
using  \eqref{:50m}, we deduce that there exists a $ \br $ such that 
\begin{align}\label{:50o}&
 \brs = \mathrm{E}^{\muonebar } [ \br |\mathcal{F}_{\rrr ,s}] 
\end{align}
and that 
\begin{align}&\notag 
\limi{s} \bbb _{r,s} = \br 
\quad \text{for $ \muonebar $-a.s.\! and in }
L^{\phat }(\muonebar )
.\end{align}
By the consistency of $ \{ \muonebar \}_{r \in \N } $ in $ r $, 
the function $ \br $ in \eqref{:50o} can be taken to be independent of $ r $. 
This together with \eqref{:50a} completes the proof of \eqref{:41a}. 
\end{proof}

\bs

We proceed with the proof of the latter half of \tref{l:21}. 
Recall SDE \eqref{:20m}. Then 
\begin{align}\label{:52a} & 
X_t^{N,i} - X_0^{N,i} = 
\int_0^{t} \sN (X_u^{N,i},\XNidu )dB _u^i + 
\int_0^t \bbb ^{N} (X_u^{N,i},\XNidu ) du 
.\end{align}
Using the decomposition in \eqref{:41w}, we see from \eqref{:52a} that 
\begin{align}\label{:52b} 
X_t^{N,i} - X_0^{N,i} =\, &
\int_0^{t} \sN (X_u^{N,i},\XNidu )dB _u^i
 + 
\int_0^t \{\bNrsp + \btail \} (X_u^{N,i},\XNidu ) du 
\\ \notag + &
\int_0^t \Big[ \{\7 \} 
+
\{ \6 - \btail \} \Big] (X_u^{N,i},\XNidu ) du 
.\end{align}

Let $ \partial _{i,j} = \PD{}{x_{i,j}}$, 
$ x_i = (x_{i,j})_{j=1}^d \in \Rd $, 
and 
$ \mathbf{x}_m = (x_i)_{i=1}^m \in (\Rd )^m $. 
Set $ \nabla _i =(\partial _{i,j} )_{j=1}^d$. 
Let $ \hhh \in C_0^{\infty} (\S ^m) $ and 
$ \aN _i\nabla _i\nabla _i \hhh (\mathbf{x}_m) = 
\sum_{k,l=1}^d \aN _{kl} (x_i) \partial_{i,k}\partial_{i,l} 
\hhh (\mathbf{x}_m)$. 
Applying the It$ \hat{\mathrm{o}}$ formula to $ \hhh $ and \eqref{:52b}, and 
putting $ \mathbf{X}_t^{\nN , m } = (X_t^{N,1},\ldots,X_t^{N,m})$, we deduce that 
\begin{align}\label{:52d}&
\hhh (\mathbf{X}_t^{N,m} ) - \hhh (\mathbf{X}_0^{N,m} ) = 
\sum_{i=1}^m \Big ( \int_0^{t}
\nc \cdot \sN (X_u^{N,i},\XNidu )dB _u^i 
 \\ \notag & 
+ 
{\int_0^t \half 
\aN _i\nabla _i\nabla _i \hhh (\mathbf{X}_u^{\nN , m } ) + 
 \{\bNrsp + \btail \} (X_u^{N,i},\XNidu ) \cdot \nc du }
 \Big) 
\\ \notag & + 
\sum_{i=1}^m \int_0^t \nc \cdot \{ \7 \} (X_u^{N,i},\XNidu ) du 
\\ \notag & + 
\sum_{i=1}^m \int_0^t 
\nc 
 \cdot 
 \{\6 - \btail \} (X_u^{N,i},\XNidu ) du 
.\end{align}
We set 
\begin{align}&\notag 
\QN = 
\sum_{i=1}^m \int_0^T \Big| \{ \7 \} (X_u^{N,i},\XNidu ) 
\Big| 
du 
,\\ &\notag 
\RN = 
\sum_{i=1}^m \int_0^T \Big|
 \{\6 - \btail \} (X_u^{N,i},\XNidu ) \Big|du 
.\end{align}

\begin{lem} \label{l:56}For each $ m , r < s \in \N $ 
\begin{align}& \notag 
\limi{\p } \limsupi{\nN } \mathrm{E} ^{\muNl }
\big[ (\QN )^{\phat }\big] 
= 0 
,\\& \notag 
\limi{s} \limsupi{\nN } \mathrm{E} ^{\muNl }
\big[ (\RN )^{\phat }\big] 
= 0 
.\end{align}
\end{lem}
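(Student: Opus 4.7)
The plan is to reduce both limits to the dynamical $L^{\phat}$-controls \eqref{:41t} and \eqref{:41v} by a single application of Jensen's inequality. Set
\[
f_i^{N,\p}(u):= \bigl|\{\bNrs-\bNrsp\}(X_u^{N,i},\XNidu )\bigr|,
\qquad
g_i^{N,s}(u):= \bigl|\{\6-\btail \}(X_u^{N,i},\XNidu )\bigr|,
\]
so that $\QN=\sum_{i=1}^{m}\int_0^{T} f_i^{N,\p}(u)\,du$ and $\RN=\sum_{i=1}^{m}\int_0^{T} g_i^{N,s}(u)\,du$. Regard each sum-integral as the integral of a nonnegative function against the product of counting measure on $\{1,\ldots,m\}$ and Lebesgue measure on $[0,T]$, a finite measure of total mass $mT$. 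Since $\phat>1$, Jensen's inequality gives
\[
(\QN )^{\phat }\le (mT)^{\phat -1}\sum_{i=1}^{m}\int_0^{T} f_i^{N,\p}(u)^{\phat }\,du,
\qquad
(\RN )^{\phat }\le (mT)^{\phat -1}\sum_{i=1}^{m}\int_0^{T} g_i^{N,s}(u)^{\phat }\,du.
\]

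Next I would take expectation under $P^{\muNl }$ and interchange it with the finite sum, obtaining
\[
\mathrm{E}^{\muNl }\bigl[(\QN )^{\phat }\bigr]\le (mT)^{\phat -1}\sum_{i=1}^{m} E^{\muNl }\Bigl[\int_0^{T}\bigl|\{\bNrsp -\bNrs \}(X_t^{N,i},\XNidt )\bigr|^{\phat }dt\Bigr],
\]
and similarly for $\mathrm{E}^{\muNl }[(\RN )^{\phat }]$. The hypothesis \eqref{:41t} in \As{I3} says exactly that each summand in the first estimate vanishes upon taking $\limsup_{N\to\infty }$ followed by $\lim_{\p \to\infty }$; since $m$ is fixed and the sum has only $m$ terms, the first assertion of the lemma follows. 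The second assertion is identical, with \eqref{:41v} in \As{I4} playing the role of \eqref{:41t}: $\limsup_{N\to\infty }$ then $\lim_{s\to\infty }$ sends each summand to zero.

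There is no real obstacle: the lemma is essentially an integrated, power-$\phat$ version of the pointwise-in-$(i,t)$ controls already built into \As{I3} and \As{I4}, and the prefactor $(mT)^{\phat -1}$ is harmless because $m$, $T$, and $\phat$ are all fixed throughout. The only bookkeeping point is that $r$ is also fixed in the statement, so the $r$-dependence hidden in $\bNrs$, $\bNrsp$, $\6$, and $\btail $ does not interfere with the two successive limits.
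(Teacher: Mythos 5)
Your proof is correct and is essentially the paper's argument: the paper simply states that the lemma follows immediately from \eqref{:41t} and \eqref{:41v}, and your Jensen's-inequality step with the factor $(mT)^{\phat -1}$ is exactly the routine reduction being left implicit. Nothing further is needed.
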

\begin{proof}
\lref{l:56} follows from \eqref{:41t} and \eqref{:41v} immediately. 
\end{proof}

Let $ \Xi = \S ^m \ts (\R ^{d^2})^m \ts (\Rd )^m $ and $ \hhh \in C_0^{\infty} (\S ^m) $. 
Let $ \map{F }{C([0,T];\Xi )}{C([0,T]; \R )}$ such that 
\begin{align}\label{:52p}
F (\xi , \eta , \zeta )& (t) = \hhh (\xi (t)) - \hhh (\xi (0) ) - 
\int_0^t \sum_{i=1}^m \zeta _i (u) \cdot \nabla _i \hhh (\xi (u)) du 
\\ \notag &
- 
\int_0^t \sum_{i=1}^m \Big(
 \half \eta _{i} (u) \Delta _i \hhh (\xi (u)) + 
 \btail (\xi _i(u)) \cdot \nabla _i \hhh (\xi (u)) \Big) du 
,\end{align}
where $ \xi = (\xi _i)_{i=1}^m $, 
$ \eta = (\eta _i)_{i=1}^m $, $ \eta _i = (\eta _{i,kl})_{k,l=1}^d$, 
$ \zeta = (\zeta _i)_{i=1}^m $, and 
$ \Delta _i = \sum_{j=1}^d \partial_{i,j} ^2 $. 

As $ \hhh \in C_0^{\infty} (\S ^m) $ and $ \btail \in C (\S ^m) $ by definition, we see that $ F $ satisfies the following. 
\\
\thetag{1} $ F $ is continuous. \\
\thetag{2} $ F (\xi , \eta , \zeta ) $ is bounded in 
$ (\xi , \eta ) $ for each $ \zeta $, and linear in $ \zeta $ for each $ (\xi , \eta )$.

\medskip

Let 
$ \mathbf{A}^{\nN ,m}= ( \mathsf{A}^{\nN ,i})_{i=1}^m $ and 
$ \BNrspm = ( \BNirsp )_{i=1}^m $ 
such that 
\begin{align}\label{:52g}& 
 \ANirs (t) = \aNrs (X_t^{N,i},\XNidt ) 
,\quad 
\BNirsp (t) = \bNrsp (X_t^{N,i},\XNidt ) 
.\end{align}
Then we see from \eqref{:52d}--\eqref{:52g} that for each $ m \in \N $
\begin{align}\label{:52h}&
\Big|F (\mathbf{X}^{N,m}, \mathbf{A}^{\nN ,m} , \BNrspm ) -
\sum_{i=1}^m \int_0^{\cdot }
\nc \cdot \sN (X_u^{N,i},\XNidu )dB _u^i \Big|\\ \notag &
 \le 
 \cref{;52h}\{ \QN + \RN \} 
,\end{align}
where $ \Ct{;52h} =\cref{;52h}(\psi )$ is the constant such that 
$ \cref{;52h}= \max_{i=1}^m \|\nabla _i \psi \|_{\S ^m}$ 
($ \| \cdot \|_{A}$ is the uniform norm over $ A$ as before). 
We take the limit of each term in \eqref{:52h} in the sequel. 

\begin{lem} \label{l:51}
$ \{X^{N,i}\}_{\nN \in\N }$, $\{ \ANirs \}_{N\in\N }$ and 
$\{ \BNirsp \}_{N \in\N }$ 
are tight for each $ i , r , s ,\p \in \N $. 
\end{lem}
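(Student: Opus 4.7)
My plan is to prove that each of the three families is tight in the appropriate path space using Kolmogorov's tightness criterion, with \As{I1} (in particular \eqref{:40x}, \eqref{:40z} and \eqref{:40p}) as the main source of uniform-in-$N$ estimates, and to treat $X^{N,i}$ first as a building block for the other two.

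\textbf{Tightness of $X^{N,i}$.} First I would check that the initial distribution is tight, which is immediate from \As{H2}. On the event $\{\overline{\mathbf{X}}^{N,i}\le a\}$ the path lies in the compact set $\S_a$, and \eqref{:40z} supplies the Kolmogorov fourth-moment bound
\begin{align*}
E^{\muNl}\bigl[|X^{N,i}_t-X^{N,i}_u|^{4};\,\overline{\mathbf{X}}^{N,i}\le a\bigr]\le \cref{;41a}|t-u|^{2}.
\end{align*}
Combining this with \eqref{:40x}, which sends the probability of $\{\overline{\mathbf{X}}^{N,i}\le a\}$ to one as $a\to\infty$ uniformly for large $N$, yields a uniform modulus-of-continuity estimate in probability, hence tightness in $C([0,T];\S)$.

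\textbf{Tightness of $\ANirs$ and $\BNirsp$.} By \eqref{:20o}, $\ANirs$ takes values in a fixed bounded set in $\R^{d^{2}}$. I would split
\begin{align*}
\ANirs(t)-\ANirs(u) = \bigl[\aN(X_t^{N,i},\XNidt)-\aN(X_u^{N,i},\XNidt)\bigr] + \bigl[\aN(X_u^{N,i},\XNidt)-\aN(X_u^{N,i},\XNidu)\bigr].
\end{align*}
The first bracket is bounded by $\cref{;3}|X^{N,i}_t-X^{N,i}_u|$ using the uniform gradient bound in \eqref{:20o}, and is therefore controlled by the previous step. For the second bracket, on the truncation event $\{\mathcal{L}_a^{N}\le L\}\cap\{\overline{\mathbf{X}}^{N,L}\le a\}$ only the finitely many particles with labels $\le L$ ever enter $\S_a$ during $[0,T]$, so the relevant part of $\XNidt$ is determined by those paths; the joint continuity of $\aN$ on $\S\ts\SS$ (vague topology) from \As{H4}, together with the simultaneous modulus-of-continuity of those finitely many paths already established, forces this bracket to be uniformly small in $N$ as $|t-u|\to 0$. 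Letting $a,L\to\infty$ via \eqref{:40x} and \eqref{:40p} finishes the argument. The treatment of $\BNirsp$ is analogous and even cleaner: since $\bNrsp$ is continuous on $\S\ts\SS$, vanishes outside $\Sr$ by \eqref{:40ff}, and is $\mathcal{F}_{r,s}$-measurable, $\BNirsp(t)$ depends only on particles within distance $s$ of $X_t^{N,i}$, i.e.\ only on particles in $\S_{r+s}$ when $X_t^{N,i}\in\Sr$; on $\{\mathcal{L}_{r+s}^{N}\le L\}\cap\{\overline{\mathbf{X}}^{N,L}\le r+s+1\}$ it is a continuous functional of finitely many continuous paths staying in a fixed compact set, and the uniform gradient bound \eqref{:41U} combined with the first step yields the required modulus-of-continuity estimate.

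\textbf{Main obstacle.} The principal difficulty is that $\aN$ and $\bNrsp$ depend on the entire configuration $\XNidt$, which in principle encodes information on all $N$ particles; reducing this dependence uniformly in $N$ to finitely many well-behaved paths is the heart of the argument, and it is driven precisely by the $\mathcal{F}_{r,s}$-measurability of $\bNrsp$, the uniform gradient bound \eqref{:41U}, and the label-truncation estimates \eqref{:40x}, \eqref{:40p} of \As{I1}.
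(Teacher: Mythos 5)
Your overall strategy coincides with the paper's: tightness of $ \{X^{N,i}\} $ is read off from \eqref{:40x} and \eqref{:40z} via Kolmogorov's criterion, and tightness of $ \{\BNirsp \} $ is obtained exactly as in the paper by localizing on $ \{\overline{\mathbf{X}}^{N,m}\le a\}\cap\{\mathcal{L}_{r+s}^{N}\le m\} $, where the $ \mathcal{F}_{r,s}$-measurability of $ \bNrsp $ (together with \eqref{:51b}) reduces the configuration dependence to finitely many coordinates, the gradient bound \eqref{:41U} converts the increment of $ \BNirsp $ into $ \sum_{j\le m}|X_u^{N,j}-X_v^{N,j}| $, \eqref{:40z} supplies the fourth-moment estimate, and the truncations are removed by \eqref{:40x} and \eqref{:40p}. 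So for the first and third families your proposal is essentially the paper's proof.

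The one genuine divergence is the configuration part of the increment of $ \ANirs $. The paper stays quantitative there as well: it derives a fourth-moment bound $ \mathrm{E}[|\ANirs (u)-\ANirs (v)|^4;\cdot ]\le c|u-v|^2 $ from the uniform bound on $ \nablax \aN $ in \As{H4} combined with \eqref{:40z}, plus tightness of $ \ANirs (0) $, and never passes through a soft compactness argument. Your substitute — uniform equicontinuity of $ \{\aN \} $ on compacts of $ \S \ts \SS $ applied to the increment in the configuration variable — is morally sound but has a soft spot as written: the claim that on $ \{\mathcal{L}_a^{N}\le L\} $ ``the relevant part of $ \XNidt $ is determined by'' the first $ L $ paths presupposes a locality of $ \aN (x,\cdot ) $ that \As{H4} does not grant ($ \aN $ is merely continuous on $ \S \ts \SS $, so it may depend on particles far from $ x $). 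To make the equicontinuity argument rigorous you must (i) confine the configurations to an actual compact subset of $ \SS $, which in the vague topology requires uniform particle-number bounds on \emph{every} ball $ \S _b $, i.e.\ an application of \eqref{:40p} for all radii simultaneously via a diagonal argument, and (ii) produce a uniform-in-$ N $ vague modulus of continuity of $ t\mapsto \XNidt $, which again needs \eqref{:40z} for all particles entering each $ \S _b $. Both ingredients are available from \As{I1}, so your route can be completed, but the reduction is not as immediate as your sentence suggests; it is fair to note that the paper's own one-line treatment of the configuration dependence of $ \ANirs $ is comparably terse, and the cleanest fix in either version is to impose or verify an analogue of the Lipschitz-in-all-coordinates bound used for $ \bNrsp $.
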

\begin{proof} 
The tightness of $ \{X^{N,i}\}_{\nN \in\N }$ is clear from \As{I1}. 

We note that $ \{\nablax  \aN \}_{\nN }$ is uniformly bounded 
on $ \Sr \ts \SS $ for each $ r \in \N $ by \As{H4}. 
Hence from this and \As{I1} there exists a constant $ \Ct{;51d}$ independent of $ \nN $ such that for all $ 0 \le u,v \le T $ 
\begin{align}&\notag 
\mathrm{E} ^{\muNl }
[|\ANirs (u) - \ANirs (v) |^{4}; \sup_{t\in [0,T] }|X_t^{N,i}| \le a ] 
 \le \cref{;51d} |u-v|^{2} 
.\end{align}
By \As{I1} we see that $\{\ANirs (0) \}_{\nN \in \N }$ 
 is tight. Combining these deduces the tightness of $\{ \ANirs \}_{N\in\N }$.

Recall that $ \BNirsp (t) = \bNrsp (X_t^{N,i},\XNidt ) $ and that 
$ \bNrsp $ is $ \mathcal{F}_{\rrr ,s}$-measurable by assumption. 
By construction
\begin{align}& \label{:51b}
P ^{\muNl } ( X_t^{N,j} \in \Sr \text{ for all } 1\le j \le m ,\, 
0 \le t \le T 
| \ \mathcal{L}_{r+s}^{N} \le m ) = 1 
.\end{align}

Let $ \Ct{;51a}= 
\supN \| \nabla \bbb _{r,s,\p }^{\nN } \|_{\S \ts \SS _s^{m-1} } $. 
From \eqref{:52g}, \eqref{:41U}, \eqref{:51b}, and \eqref{:40z} 
we see 
\begin{align}& \notag 
 E ^{\muNl } 
[|\BNirsp (u) - \BNirsp (v)| ^{4}; \sup_{t\in [0,T] }|X_t^{N,i}| \le a ,\ 
\mathcal{L}_{r+s}^{N} \le m 
]
\\\notag = \, & 
 E ^{\muNl } 
[|\bNrsp (X_u^{N,i},\XNidu ) - \bNrsp (X_v^{N,i},\XNidv ) 
| ^{4}; \sup_{t\in [0,T] }|X_t^{N,i}| \le a ,\ 
\mathcal{L}_{r+s}^{N} \le m 
]
\\ \notag \le \, & 
 E ^{\muNl } 
[ \sum_{j=1 }^{m} 
\cref{;51a}^4
| X_u^{N,j} - X_v^{N,j}| ^{4} ; 
 \sup_{t\in [0,T] }|X_t^{N,i}| \le a ,\ \mathcal{L}_{r+s}^{N} \le m ] 
\\ \notag 
 \le \, & \cref{;51a}^4 \cref{;40i}|u-v| ^{2} 
\quad \text{ for all $ 0 \le u, v \le T $}
.\end{align}
From this, \eqref{:40x}, and \eqref{:40p}, we deduce 
the tightness of $\{ \BNirsp \}_{N\in\N }$. 
\end{proof}
\begin{lem} \label{l:53}
$ \{((X^{N,i}, \ANirs , \BNirsp ) )_{i=1}^m\}_{N \in\N } $ is tight in 
$C([0,T], \Xi ^m )$ for each $ m , r , s , \p \in \N $. 
\end{lem}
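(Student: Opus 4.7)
The plan is to deduce the joint tightness in $C([0,T], \Xi^m)$ directly from the coordinate-wise tightness established in Lemma~\ref{l:51}, using the elementary fact that marginal tightness implies joint tightness on a product of Polish spaces.

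First, I would recall that $\Xi = \S^m \ts (\R^{d^2})^m \ts (\Rd)^m$, so a process with values in $\Xi^m$ (indexed by the particle label $i=1,\dots,m$) is simply the product of $3m$ coordinate processes. From Lemma~\ref{l:51} we already know that each of the three families $\{X^{N,i}\}_{N}$, $\{\ANirs\}_{N}$, and $\{\BNirsp\}_{N}$ is tight in the appropriate $C([0,T];\cdot)$ space, for every $i=1,\dots,m$.

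Next, I would invoke the standard fact that if $\{Z_N^{(k)}\}_N$ is tight in a Polish space $E_k$ for each $k=1,\dots,K$, then the joint family $\{(Z_N^{(1)},\dots,Z_N^{(K)})\}_N$ is tight in the product space $\prod_k E_k$. The argument is a one-line application of Prokhorov's theorem: given $\epsilon > 0$, choose compact sets $K_k \subset E_k$ with $P(Z_N^{(k)} \notin K_k) \le \epsilon/K$ uniformly in $N$; then $K_1\ts\cdots\ts K_K$ is compact in the product space and $P((Z_N^{(1)},\dots,Z_N^{(K)}) \notin K_1\ts\cdots\ts K_K ) \le \epsilon$. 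Applying this with $K=3m$ and the three families from Lemma~\ref{l:51} (for $i=1,\dots,m$) gives the desired tightness of $\{((X^{N,i},\ANirs ,\BNirsp))_{i=1}^m\}_{N}$ in $C([0,T],\Xi^m)$.

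There is essentially no obstacle here; the lemma is a bookkeeping step that packages the three separate tightness statements of Lemma~\ref{l:51} into a single joint statement, which is what is needed in order to extract a common convergent subsequence and pass to the limit in the identity \eqref{:52h} in the subsequent steps of the proof of \tref{l:21}. The only point worth mentioning in the write-up is the identification $C([0,T],\Xi^m) \cong C([0,T],\S)^m \ts C([0,T],\R^{d^2})^m \ts C([0,T],\Rd)^m$, which justifies reducing joint tightness on $C([0,T],\Xi^m)$ to joint tightness of the $3m$ coordinate processes.
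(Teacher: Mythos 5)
Your proposal is correct and is essentially the paper's own argument: the paper likewise deduces Lemma~\ref{l:53} directly from Lemma~\ref{l:51} by remarking that tightness of measures on a (countable) product space follows from tightness of each component, which is exactly the product-of-compacts argument you spell out.
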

\begin{proof}
\lref{l:53} is obvious from \lref{l:51}. 
 Indeed, the tightness of the probability measures on a countable product space follows from that of the distribution of each component. 
\end{proof}

%
Assumption \As{I1} and \lref{l:53} combined with the diagonal argument 
imply that for any subsequence of 
$ \{((X^{N,i}, \ANirs , \BNirsp ) )_{i=1}^m\}
_{N, \, \p \in\N ,\, r < s < \infty } $, 
there exists a convergent-in-law subsequence, denoted by the same symbol. 
That is, for each $ \p, s, r , m\in\N $, 
\begin{align}& \label{:54a} 
\limi{\nN }
(X^{N,i}, \ANirs , \BNirsp ) _{ i=1}^{m} = 
(X^{i}, \Airs , \Birsp ) _{ i=1}^{m} 
\quad \text{ in law}
.\end{align}
We thus assume \eqref{:54a} in the rest of this section. 

Let $ \mathbf{A}^m = (\Airs )_{i=1}^m $, 
$ \BNrspm = ( \BNirsp )_{i=1}^m $, and 
$ \mathbf{X}^{m}= (X^i)_{i=1}^m $ 
for $ \mathbf{X}=(X^i)_{i\in\N }$ in \tref{l:21}. 

\begin{lem} \label{l:54} 
For each $ m \in \N $ 
\begin{align}\label{:54b} &
 \limi{\nN } 
F (\mathbf{X}^{N,m}, \mathbf{A}^{\nN ,m} , \BNrspm ) = 
F (\mathbf{X}^{m}, \mathbf{A}^m , \Brspm ) 
\quad \text{ in law}
.\end{align}
Moreover, $ \Airs $ and $ \Birsp $ are given by 
\begin{align}\label{:54u}&
 \Airs (t) = \aaa (X_t^{i},\Xidt ) , \quad 
\Birsp (t) = \brsp (X_t^{i},\Xidt ) 
.\end{align}
\end{lem}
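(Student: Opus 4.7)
The plan is to establish the functional convergence \eqref{:54b} and the identification \eqref{:54u} separately. For \eqref{:54b} I would apply the continuous mapping theorem directly: the map $F$ defined in \eqref{:52p} is continuous from $C([0,T];\Xi ^m)$ into $C([0,T];\R )$, as was noted immediately after its definition; since $\hhh \in C_0^{\infty}(\S ^m)$ has bounded derivatives and $\btail $ is continuous by \As{I4}, uniform convergence of triples $(\xi _n,\eta _n,\zeta _n)$ in $C([0,T];\Xi ^m)$ forces uniform convergence of each of the three integrands in \eqref{:52p}. Combined with the joint convergence in law \eqref{:54a}, this yields \eqref{:54b}.

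For \eqref{:54u} I would invoke Skorohod's representation theorem so that the convergence \eqref{:54a} can be assumed to hold almost surely in the uniform norm on $[0,T]$. By continuity of the limit paths it then suffices to verify $\Airs (t)=\aaa (X_t^i,\Xidt )$ and $\Birsp (t)=\brsp (X_t^i,\Xidt )$ pointwise in $t$. For $a,L\in \N $, set $E_{a,L}:=\{\overline{\mathbf{X}}^{N,\max (m,L)}\le a,\ \mathcal{L}_{r+s}^N\le L\}$; by \As{I1} one has $\liminf _N P^{\muNl}(E_{a,L})\to 1$ as $a,L\to \infty $, so it suffices to identify the limits on $E_{a,L}$ and then let $a,L\to \infty $. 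On $E_{a,L}$, every particle with label exceeding $L$ remains outside $\S _{r+s}$ throughout $[0,T]$. Because $\bNrsp $ is $\mathcal{F}_{\rrr ,s}$-measurable and vanishes for $x\notin \S _r$ by \eqref{:40ff}, this forces $\bNrsp (X_t^{N,i},\XNidt )=\bNrsp \big(X_t^{N,i},\sum _{j\le L,\, j\ne i}\delta _{X_t^{N,j}}\big)$ on $E_{a,L}$, a finite-dimensional object. The componentwise Skorohod convergence yields vague convergence of this truncated configuration to $\sum _{j\le L,\, j\ne i}\delta _{X_t^{j}}$; decomposing $E_{a,L}$ according to the number $k\le L$ of truncated particles in $\S _r$ at time $t$, and applying \eqref{:41r} on $\S \times \Srm $ with $k$ in place of $m$ together with continuity of $\brsp $, I obtain $\Birsp (t)=\brsp (X_t^i,\Xidt )$, the tail beyond $L$ being invisible to $\brsp $ by the same locality. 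The identification of $\Airs $ follows the same blueprint but is simpler: the uniform convergence $\aN \to \aaa $ from \As{H4} holds on all of $\S _r\times \SSS $, so continuity of $\aaa $ combined with the vague convergence of $\XNidt $ on $E_{a,L}$ gives $\Airs (t)=\aaa (X_t^i,\Xidt )$ at once.

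The hard part is the configuration-space bookkeeping for $\Birsp $: although $\XNidt $ is a sum of $N-1$ particles (with $N\to \infty $) while $\Xidt $ is an infinite configuration, the $\mathcal{F}_{\rrr ,s}$-locality of $\bNrsp $ combined with the label bound $\mathcal{L}_{r+s}^N\le L$ reduces the comparison to finitely many coordinates, after which \eqref{:41r} and componentwise Skorohod convergence close the argument. A minor boundary issue---that a limiting particle may lie exactly on $\partial \S _r$ and interfere with vague convergence---can be circumvented by perturbing $r$ slightly to stay clear of such atypical values.
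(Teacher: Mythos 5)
Your proposal is correct and follows essentially the same route as the paper: the paper's own proof simply invokes continuity of $F$ together with \eqref{:54a} for \eqref{:54b}, and the uniform convergences from \As{H4} and \eqref{:41r} together with \eqref{:52g} for \eqref{:54u}. Your Skorohod-representation argument with the localization events $E_{a,L}$ and the $\mathcal{F}_{\rrr ,s}$-locality of $\bNrsp $ is a careful expansion of the identification step that the paper states in a single sentence, and it is consistent with the paper's intent.
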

\begin{proof}Recall that $ F (\xi , \eta , \zeta )$ is continuous. 
Hence \eqref{:54b} follows from \eqref{:54a}. 
By \As{H4} we see $ \{\aN \}$ converges to $ \aaa $ uniformly on each 
$ \SrSS $. Then, from this, \eqref{:41r}, and \eqref{:52g} 
we obtain \eqref{:54u}. 
\end{proof}
\begin{lem} \label{l:55} For each $ m \in \N $ 
\begin{align}\notag 
& \limi{\nN } \sum_{i=1}^m \int_0^{\cdot }
\nc \cdot \sN (X_u^{N,i},\XNidu )dB _u^i = 
\mart 
\quad \text{ in law},\end{align}
where $ (\hat{B}^i)_{i=1 }^m $ is the first $ m $-components of 
 a $ (\Rd )^{\N } $-valued Brownian motion $ (\hat{B}^i)_{i\in\N } $. 
\end{lem}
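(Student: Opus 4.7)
The plan is to treat the stochastic integrals $M_t^{N,i} := \int_0^t \nabla_i\hhh(\mathbf{X}_u^{\nN,m}) \cdot \sN(X_u^{\nN,i},\XNidu)\, dB_u^i$ as continuous real-valued martingales, establish their joint tightness alongside the already-convergent coordinates from \lref{l:53}, identify the limit via martingale characterization of its quadratic variation, and conclude using the martingale representation theorem.

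First I would verify tightness of $\{M^{N,i}\}_{\nN\in\N}$ in $C([0,T];\R)$. Since $\hhh \in C_0^{\infty}(\S^m)$, the gradients $\nabla_i\hhh$ are bounded, and by \eqref{:20o} the matrices $\sN$ are uniformly bounded, so the integrands are bounded uniformly in $\nN,t,\omega$. Burkholder--Davis--Gundy then yields a constant $C$ independent of $\nN$ with $\mathrm{E}^{\muNl}[|M^{N,i}_t - M^{N,i}_s|^4]\le C|t-s|^2$, and Kolmogorov's criterion gives the desired tightness. Combined with \lref{l:53}, the joint family $\{(X^{N,i},\ANirs,\BNirsp, M^{N,i})_{i=1}^m\}_{\nN}$ is tight, and along a further subsequence of the one yielding \eqref{:54a} it converges in law to $(X^i,\Airs,\Birsp, M^i)_{i=1}^m$ with $(\Airs,\Birsp)$ already identified in \lref{l:54}. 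By Skorokhod's representation theorem I may then assume a.s.\ uniform convergence on $[0,T]$ on a common probability space.

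Next I would show that each $M^i$ is a continuous martingale in the filtration generated by the limit process. For any bounded continuous cylindrical $F$ of the path up to time $s$, the identity $\mathrm{E}[(M^{\nN,i}_t - M^{\nN,i}_s)F] = 0$ passes to the limit by uniform integrability (from the bounded integrand). The cross-variation is
\begin{align*}
 \langle M^{\nN,i}, M^{\nN,j}\rangle_t = \delta_{ij}\int_0^t \nabla_i\hhh(\mathbf{X}^{\nN,m}_u) \cdot \aN(X^{\nN,i}_u,\XNidu)\nabla_i\hhh(\mathbf{X}^{\nN,m}_u)\,du.
\end{align*}
Because $\hhh$ has compact support and $\aN \to \aaa$ uniformly on each $\SrSS$ by \As{H4}, together with the a.s.\ uniform convergence $X^{\nN,i}\to X^i$, the right-hand side converges a.s.\ to $\delta_{ij}\int_0^t \nabla_i\hhh(\mathbf{X}^m_u)\cdot \aaa(X^i_u,\Xidu)\nabla_i\hhh(\mathbf{X}^m_u)\,du$. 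Passing this through $\mathrm{E}[(M^{\nN,i}_tM^{\nN,j}_t-\langle M^{\nN,i},M^{\nN,j}\rangle_t - (M^{\nN,i}_sM^{\nN,j}_s-\langle M^{\nN,i},M^{\nN,j}\rangle_s))F]=0$ identifies the covariation matrix of the limit as diagonal with entries $\int_0^t \nabla_i\hhh\cdot\aaa\nabla_i\hhh\,du$.

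Finally, on a suitably enlarged probability space I would apply the martingale representation theorem to obtain independent standard $\Rd$-valued Brownian motions $\hat{B}^i$, $i\in\N$, such that
\begin{align*}
 M^i_t = \int_0^t \nabla_i\hhh(\mathbf{X}^m_u)\cdot \sigma(X^i_u,\Xidu)\, d\hat{B}^i_u \quad (1\le i\le m),
\end{align*}
which is exactly the claimed limit. The main technical obstacle is the representation step when $\sigma$ is degenerate: one must invoke the Ikeda--Watanabe enlargement procedure with an auxiliary independent $(\Rd)^{\N}$-valued Brownian motion, and verify that the diagonal form of the covariation matrix translates into the desired independence of the $\hat{B}^i$ across $i$ and the extension to $i > m$.
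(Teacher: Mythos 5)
Your proposal is correct and follows essentially the same route as the paper: the paper's proof also identifies the limit through the quadratic (cross-)variation $\delta_{ij}\int_0^{\cdot}\aN_{kl}\,\partial_{i,k}\hhh\,\partial_{i,l}\hhh\,du$, passes to the limit using \As{H4} and \As{I1}, and concludes that the limit has the covariation structure of $\Mart$. You simply make explicit the standard steps the paper leaves implicit (tightness of the stochastic integrals via Burkholder--Davis--Gundy and Kolmogorov, preservation of the martingale property under weak limits, and the martingale representation theorem on an enlarged space), all of which are sound.
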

\begin{proof}
By the calculation of quadratic variation, we see 
\begin{align*}&
\bra 
\int_{0}^{\cdot} 
\partial _{i,k} \hhh (\mathbf{X}_u^{N,m}) 
\sum_{n=1}^d \sN _{kn}(X_u^{N,i},\XNidu ) dB_{u}^{i,n} 
, 
\int_{0}^{\cdot} 
\partial _{j,l} \hhh (\mathbf{X}_u^{N,m}) 
\sum_{n=1}^d \sN _{ln}(X_u^{N,j},\XNjdu ) dB_{u}^{j,n}
 \ket_u 
\\ \notag &
= \delta_{ij}
\int_0^{\cdot } \aN _{kl}(X_u^{N,i},\XNidu )
\partial _{i,k} \hhh (\mathbf{X}_u^{N,m}) 
\partial _{i,l} \hhh (\mathbf{X}_u^{N,m}) du
.\end{align*}
From \As{H4}, we see that 
$\aN $ converges to $ \mathsf{a}$ uniformly on $ \Sr $ for each $ r \in \N $. 
Hence we deduce from \As{I1} and $ \hhh \in C_0^{\infty} (\S ^m)$ 
the convergence in law such that 
\begin{align} \notag 
\limi{\nN }&\sum_{i=1}^m
\int_0^{\cdot } \aN _{kl}(X_u^{N,i},\XNidu )
\partial _{i,k} \hhh (\mathbf{X}_u^{N,m}) 
\partial _{i,l} \hhh (\mathbf{X}_u^{N,m}) du
\\ \notag &
= \sum_{i=1}^m \int_0^{\cdot } \aaa _{kl}(X_u^{i},\Xidu )
\partial _{i,k} \hhh (\mathbf{X}_u^{m}) 
\partial _{i,l} \hhh (\mathbf{X}_u^{m}) du
.\end{align}
Then the right-hand side gives the quadratic variation of 
$\Mart $. This completes the proof. 
\end{proof}

\bs

We are now ready for the proof of \tref{l:21}.

\noindent 
{\em Proof of \tref{l:21}. } 
From \lref{l:56} and \eqref{:52h} we deduce that 
\begin{align}\notag 
\limsupi{\nN } &\mathrm{E} ^{\muNl }\Big[
\sup_{0\le t \le T }
\Big|F (\mathbf{X}^{N,m}, \mathbf{A}^{\nN ,m} , \BNrspm )(t) 
\\ \notag & \quad \quad 
-
\sum_{i=1}^m \int_0^{t} 
\nc \cdot \sN (X_u^{N,i},\XNidu )dB _u^i \Big|^{\phat } \Big]
\\ & \notag 
\le \limsupi{\nN } 
\mathrm{E} ^{\muNl }\big[ (\QN )^{\phat }+ (\RN )^{\phat }\big] 
=: \cref{;41}(s,\p ) 
,\end{align}
where $ 0\le \Ct{;41}(s,\p )=\cref{;41}(s,\p ,\hhh ) \le \infty $ is a constant 
depending on $ s , \p , \hhh $. 
Applying \lref{l:54} and \lref{l:55} to \eqref{:52h}, we then deduce that 
\begin{align*}& 
 \mathrm{E} ^{\mul }\big[ \sup_{0\le t \le T } \big|
F (\mathbf{X}^{m}, \mathbf{A}^m , \Brspm )(t) 
-
\sum_{i=1}^m 
\int_0^{t } \8 \cdot \sigma (X_u^{i},\Xidu )d\hat{B }_u^i 
\big|^{\phat } \big] 
\le \cref{;41}(s,\p )
.\end{align*}
From this and \eqref{:52p}, we obtain that 
\begin{align}\label{:59f}&
 \mathrm{E} ^{\mul }\big[\sup_{0\le t \le T } \big|
\hhh (\mathbf{X}_{t }^{m} ) - \hhh (\mathbf{X}_0^{m} ) - 
\sum_{i=1}^m 
\int_0^{t }\8 \cdot 
 \sigma (X_u^{i},\Xidu )d\hat{B }_u^i 
\\ \notag & \quad 
 - \sum_{i=1}^m 
 \9 
\\ \notag & \quad 
 - \sum_{i=1}^m \int_0^{t } 
\mathsf{b}_{r,s,\p}(X_u^{i},\Xidu ) \cdot 
 \8 du \big|^{\phat } \big]
\\ \notag &
\le \cref{;41}(s,\p ) 
.\end{align}

Take $ \hhh = \hhh _R \in C_0(\S ^m )$ 
such that $ \hhh (x_1,\ldots,x_m) = x_i $ for $ \{ |x_j|\le R; j=1,\ldots,m \} $ while keeping $ |\nabla_i \hhh |$ bounded in such a way that 
\begin{align*}&
\cref{;41}(\p , s )=\sup_R\cref{;41}(\p , s ,R) = o (\p , s )
.\end{align*}
Then we deduce from \eqref{:59f} that 
\begin{align}\label{:59h}&
 \mathrm{E} ^{\mul }\big[\sup_{0\le t \le T }\big|
X_{t\wedge \tau_R}^i - 
X_0^i 
- \int_0^{t\wedge \tau_R } 
\sigma (X_u^i,\mathsf{X}_u^{\diai }) 
d\hat{B }_u^i 
 \\ \notag &
- 
\int_0^{t\wedge \tau_R } 
\{\brsp (X_u^i,\mathsf{X}_u^{\diai })+ 
\btail (X_u^i)\} du 
\big|^{\phat } \big]
\le \cref{;41}(s,\p ) 
,\end{align}
where $ \tau_R $ is a stopping time such that, for 
$ \mathbf{X}^m=(X^{i},\Xid )_{i=1}^m \in C([0,T];(\S \ts \SS )^m)$, 
\begin{align} \notag 
& 
\tau_R = \inf\{ t>0; |X_t^i| \ge R \text{ for some } i=1,\ldots,m \} 
.\end{align}
As $ R > 0 $ is arbitrary, \eqref{:59h} holds for all $ R > 0$. 
Taking $ R \to \infty $, we thus obtain 
\begin{align}\label{:59i}&
\mathrm{E} ^{\mul }\big[\sup_{0\le t \le T } \big|
X_{t}^i - 
X_0^i 
- \int_0^{t } 
\sigma (X_u^i,\mathsf{X}_u^{\diai }) 
d\hat{B }_u^i 
 \\ \notag &\quad \quad \quad \quad \quad \quad \quad 
- 
\int_0^{t } 
\{\brsp (X_u^i,\mathsf{X}_u^{\diai })+ 
\btail (X_u^i)\} du 
\big| \big] 
\\ \notag \le \, & 
\liminfi{R} \mathrm{E} ^{\mul }\big[\sup_{0\le t \le T }\big|
X_{t\wedge \tau_R}^i - 
X_0^i 
- \int_0^{t\wedge \tau_R} 
\sigma (X_u^i,\mathsf{X}_u^{\diai }) 
d\hat{B }_u^i 
 \\ \notag & \quad \quad \quad \quad \quad \quad \quad 
- 
\int_0^{t\wedge \tau_R} 
\{\brsp (X_u^i,\mathsf{X}_u^{\diai })+ 
\btail (X_u^i)\} du 
\big| \big]
\\ \notag 
\le \, &\cref{;41}(s,\p )^{1/{\phat }} 
.\end{align}
We note here that the integrands in the first and second lines of \eqref{:59i} 
are uniformly integrable because of \eqref{:59h}. 
Taking $ \p \to \infty $, then $ s \to \infty $ in \eqref{:59i}, and using assumptions \eqref{:41tt} and \eqref{:41z} we thus obtain 
\begin{align}& \notag 
 \mathrm{E} ^{\mul }\big[\sup_{0\le t \le T } \big| 
X_{t}^i - X_0^i 
- \int_0^{t} \sigma (X_u^i,\mathsf{X}_u^{\diai }) d\hat{B }_u^i 
- 
\int_0^{t} 
\{\bbb (X_u^i,\mathsf{X}_u^{\diai })+ 
\btail (X_u^i)\} du 
\big| \big] = 0 
.\end{align}
This implies for all $ 0\le t \le T $ 
\begin{align}\label{:59l}&
X_{t}^i - X_0^i 
- \int_0^{t} \sigma (X_u^i,\mathsf{X}_u^{\diai }) 
d\hat{B }_u^i 
- 
\int_0^{t} 
\{\bbb (X_u^i,\mathsf{X}_u^{\diai })+ 
\btail (X_u^i)\} du = 0 
.\end{align}
We deduce \eqref{:41b} from \eqref{:59l}, which completes the proof of \tref{l:21}. 
\qed 

\section{Proof of \tref{l:22} } \label{s:4}

Is this section we prove \tref{l:22} using \tref{l:21}. 
\As{H1}--\As{H4} are commonly assumed in \tref{l:22} and \tref{l:21}. 
Hence our task is to derive condition \0 from conditions stated in \tref{l:22}. 
From \As{J2} we easily deduce that 
\begin{align} \label{:60a}&
\limi{N}\uN =\uu \quad \text{ in } L^{\phat }_{\mathrm{loc}}(\S ,dx)
,\\ \label{:60b}&
\limi{N}\ggNs = \ggs \quad \text{ in }L^{\phat }_{\mathrm{loc}}(\muone ) 
\quad \text{for all }s 
.\end{align}

\begin{lem} \label{l:61}
$ \mu $ has a logarithmic derivative $ \dmu $ in $ \Lplochat $, 
where $ 1 \le p < \phat $. 
\end{lem}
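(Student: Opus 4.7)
The plan is to identify the candidate logarithmic derivative as the function $\dmu(x,\mathsf{y}) = \uu(x) + \mathsf{g}(x,\mathsf{y}) + \www$ from \eqref{:21v}, verify that it lies in $\Lploc(\muone)$ for $1 < p < \phat$, and verify the defining integration-by-parts identity of Definition~\ref{d:21} by passing to the limit in the corresponding identity for $\dmuN$. The ingredients are all in place: \As{J2} provides the decomposition $\dmuN = \uN + \ggNs + \rrNs$ and the uniform convergence properties \eqref{:60a}, \eqref{:60b}, \eqref{:21r}; \As{H1} provides the convergence of correlation functions, which in turn yields the weak convergence of the one-Campbell measures $\muNone \to \muone$ against continuous bounded functions on $\S \times \SS$ that vanish outside $\Sr \times \SS$ (via \cite[Lemma A.1]{o.rm}).

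First I would record the integrability. From \eqref{:21r} the sequence $\{\rrNs\}_s$ is Cauchy in $L^{\phat}(\muNone_r)$ uniformly in $N$, and \eqref{:60b} together with the uniform bound on $\rho^{N,1}$ from \As{H1} transfers this to the limit measure: $\mathsf{g} = \limi{s} \ggs$ exists in $\Lploc(\muone)$. Combined with $\uu \in L^{\phat}_{\mathrm{loc}}(\S, dx)$ and the trivial bound on the Lebesgue part of $\muone$ from the local boundedness of $\rho^1$, this places $\dmu$ in $\Lploc(\muone)$.

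Next I would verify the integration-by-parts identity. For $\varphi \in C_0^{\infty}(\S) \otimes \di$, the identity
\begin{equation*}
\int_{\S \times \SS} \dmuN \varphi \, d\muNone = -\int_{\S \times \SS} \nablax \varphi \, d\muNone
\end{equation*}
holds by definition of $\dmuN$. The right-hand side converges to $-\int \nablax \varphi \, d\muone$ by weak convergence of $\muNone$ and the fact that $\nablax \varphi$ is bounded and of bounded $x$-support. For the left-hand side I would split $\dmuN = \uN + \ggNs + \rrNs$, fix $s$ large, pass $\nN \to \infty$ term by term using \eqref{:60a}, \eqref{:60b}, and \eqref{:21r} together with the uniform $L^{\phat}(\muNone_r)$ control from \As{J2} to justify uniform integrability, and finally let $s \to \infty$ using the Cauchy property \eqref{:21r} and the definition of $\mathsf{g}$ and $\www$.

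The main obstacle is that $\muNone$ and $\muone$ are mutually singular, so term-by-term convergence must be phrased as a weak-type statement and the passage to the limit in the $\ggNs$ term requires handling the diagonal singularity of $\gN$; this is precisely what \eqref{:21q} is designed to control, ensuring that the integrand $\chi_s(x-y)\gN(x,y)$ against $\rho_x^{N,1}(y)\,dy$ has a small contribution near $x=y$ uniformly in $N$. Once these convergences are in hand, the result follows; alternatively, since the assumptions \As{H1} and \As{J2} coincide with those of \cite[Theorem 45]{o.isde}, a direct citation of that theorem gives the same conclusion and the explicit expression \eqref{:21v} simultaneously.
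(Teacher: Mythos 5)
Your proposal is correct, and your closing ``alternatively'' is in fact the paper's entire proof: the authors simply match the hypotheses and invoke \cite[Theorem 45]{o.isde}, noting that \As{H1} corresponds to \thetag{4.1}--\thetag{4.2} there and that \eqref{:60a}, \eqref{:60b}, \eqref{:21m}, \eqref{:21r} correspond to \thetag{4.15}, \thetag{4.30}, \thetag{4.29}, \thetag{4.31}. Your primary plan --- identifying the candidate $\dmu = \uu + \mathsf{g} + \www $ from \eqref{:21v}, checking $\Lploc(\muone)$ membership, and passing to the limit in the integration-by-parts identity for $\dmuN$ term by term, with \eqref{:21q} controlling the diagonal singularity of $\gN$ and the mutual singularity of the Campbell measures forcing a weak formulation --- is precisely an unpacking of what that cited theorem does internally, so it is a legitimate self-contained route but buys nothing the citation does not already give; the paper's version is shorter at the cost of being a black box. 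The only slips are cosmetic: the lemma asserts membership for $1 \le p < \phat$ (your $1 < p$ suffices since $\muone$ is locally finite, so local $L^p$ embeds in local $L^1$), and one should be explicit that the weak convergence of $\muNone$ to $\muone$ used for the right-hand side is itself derived from \eqref{:20f}--\eqref{:20g}, which is exactly the role of \As{H1} in the hypotheses of the cited theorem.
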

\begin{proof}
We use a general theory developed in \cite{o.isde}. 
\As{H1} corresponds to \thetag{4.1} and \thetag{4.2} in \cite{o.isde}. 
\eqref{:60a}, \eqref{:60b}, \eqref{:21m}, and \eqref{:21r} 
correspond to \thetag{4.15}, \thetag{4.30}, \thetag{4.29}, 
and \thetag{4.31} in \cite{o.isde}. 
Then all the assumptions of \cite[Theorem 45]{o.isde} are satisfied. 
We thus deduce \lref{l:61} from \cite[Theorem 45]{o.isde}. 
\end{proof}

\bs 

Let $ \{ \mathbf{X}^{\nN } \}_{\nN \in \N } $ be a sequence of solutions in \eqref{:20m} and \eqref{:20n}. We set the $ m $-labeling 
\begin{align}\label{:62p}&
 \mathbf{X}^{\nN , [m] } = 
(X^{\nN , 1},\ldots, X^{\nN , m},\sum_{j= 1+m }^{\nN } \delta_{X^{\nN , j}}) 
.\end{align}
It is known \cite{o.tp, o.isde} that $ \mathbf{X}^{\nN , [m] } $ 
is a diffusion process associated with the Dirichlet form 
$ \mathcal{E}^{\muNm } $ on $ L^2(\S ^{m}\ts \SS , \muNm ) $ such that 
\begin{align}\label{:62q}&
\mathcal{E}^{\muNm } (f,g) = 
\int_{\S ^{m}\ts \SS } 
\half \{ \sum_{i=1}^m \nabla _i{f} \cdot \nabla _i{g} \} + 
\mathbb{D}[f,g] d\muNm 
,\end{align}
where the domain $ \mathcal{D}^{[m]}$ is taken as the closure of 
$ \mathcal{D}_0^{[m]}= C_0^{\infty} (\S ^m)\ot \mathcal{D}_{\circ } $. 
Note that the coordinate function $ x_i = x_i\ot 1 $ is locally in $ \mathcal{D}^{[m]}$. 
From this we can regard $ \{X _t^{N,i}\}$ as a Dirichlet process of the 
$ m $-labeled diffusion $ \mathbf{X}^{\nN } $ 
associated with the Dirichlet space as above. 
In other words, we can write 
\begin{align}&\notag 
X _t^{N,i} - X _0^{N,i} = 
 f_i (\mathbf{X}^{\nN } _t) - f_i (\mathbf{X}^{\nN } _0) 
=: A _t^{[f_i]} 
,\end{align}
where $ f_i (\mathbf{x},\mathsf{s}) = x_i \ot 1 $, $ x_i \in \Rd $, and 
$ \mathbf{x} = (x_j)_{j=1}^m \in (\Rd )^m$. 
By the Fukushima decomposition of $ X_t^{N,i} $, there exist a unique continuous local martingale additive functional $ \MNi = \{ \MNi _t \} $ and 
an additive functional of zero energy 
$ \mathsf{N}^{\nN ,i } = \{ \mathsf{N}^{\nN ,i } _t \} $ such that 
\begin{align}&\notag 
 X_t^{N,i} - X_0^{N,i} = \MNi _t + \mathsf{N}^{\nN ,i } _t 
.\end{align}
We refer to \cite[Chapter 5]{FOT.2} for the Fukushima decomposition. 
Because of \eqref{:20m}, we then have 
\begin{align}&\notag 
 \MNi _t = \int_0^t \sN (X_u^{N,i},\XNidu )dB_u^i 
,\quad 
 \mathsf{N}^{\nN ,i } _t = \int_0^t \bbb ^{\nN }(X_u^{N,i},\XNidu )du 
.\end{align}
\begin{lem} \label{l:62} 
Let $ \map{r_T}{C([0,T];\S )}{C([0,T];\S )}$ be such that 
$ r_T (X )_t = X_{T-t}$. 
Suppose that $ \mathbf{X}_0^{\nN , [m] } = \mu ^{N,[m]} $ in law. 
Then 
\begin{align} \label{:62a}
X_t^{N,i} - X_0^{N,i} 
&=\half \MNi _t + \half (\MNi _{T-t} (r_{T}) - \MNi _{T} (r_{T})) 
\quad \text{ a.s.}
.\end{align}
\end{lem}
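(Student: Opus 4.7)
\textit{Proof plan.} The identity is the Lyons--Zheng forward--backward martingale decomposition, applied to the coordinate function $f_i(\mathbf{x},\mathsf{s}) = x_i$ viewed as an element of the local domain of the Dirichlet form $(\mathcal{E}^{\mu ^{N,[m]}},\mathcal{D}^{[m]})$ governing the $m$-labeled dynamics $\mathbf{X}^{\nN ,[m]}$ from \eqref{:62p}--\eqref{:62q}. The key structural inputs are: (i) by construction $\mathbf{X}^{\nN ,[m]}$ is $ \mu ^{N,[m]} $-symmetric, hence reversible; (ii) the hypothesis $ \mathbf{X}_0^{\nN ,[m]} = \mu ^{N,[m]} $ in law places the process in stationary equilibrium on $[0,T]$, so the pathwise time-reversal map $r_T$ preserves the joint law of the path; (iii) the coordinate function $f_i = x_i \otimes 1$ is locally in $\mathcal{D}^{[m]}$, as noted after \eqref{:62q}.

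\textbf{Step 1 (Fukushima decomposition).} Record the already-stated Fukushima decomposition
\begin{align*}
X_t^{N,i} - X_0^{N,i} = \MNi _t + \mathsf{N}^{\nN ,i}_t,
\end{align*}
where $\MNi$ is the continuous local martingale additive functional and $\mathsf{N}^{\nN ,i}$ is the continuous additive functional of zero energy. Since \eqref{:20m} holds, $\MNi _t = \int_0^t \sN (X_u^{N,i},\XNidu)\, dB_u^i$ is already identified.

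\textbf{Step 2 (Lyons--Zheng).} For any $ g \in \mathcal{D}^{[m]}$ bounded, the Lyons--Zheng theorem for symmetric Dirichlet forms, applied to the process started from its reversible measure on the finite time interval $[0,T]$, yields the identity
\begin{align*}
\mathsf{N}_t^{[g]} = -\tfrac{1}{2}M_t^{[g]} + \tfrac{1}{2}\bigl(M_{T-t}^{[g]}(r_T) - M_T^{[g]}(r_T)\bigr) \quad\text{a.s.}
\end{align*}
Inserting this into the Fukushima decomposition and rearranging produces the target formula. Apply this recipe to $g = f_i$; the zero-energy functional $\mathsf{N}^{\nN ,i}$ then acquires the claimed representation and \eqref{:62a} follows.

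\textbf{Step 3 (localization; the main obstacle).} The coordinate function $f_i$ is only locally, not globally, in $\mathcal{D}^{[m]}$, so Step 2 cannot be invoked directly. The remedy is a standard truncation by stopping times: introduce $\sigma _R = \inf\{t \in [0,T]\,;\, |X_t^{N,i}| \ge R\}$ and a smooth cutoff $\chi_R \in C_0^{\infty}(\S )$ with $\chi_R (x) = x$ on $\{|x|\le R\}$, so that $\chi_R \circ f_i \in \mathcal{D}^{[m]}$ is bounded with bounded gradient. Apply Step 2 to $\chi_R \circ f_i$; on the event $\{\sigma _R > T\}$, the truncated identity restricted to $[0,\sigma _R \wedge t]$ coincides with \eqref{:62a} for the un-truncated functional, because the martingale and zero-energy parts are stable under multiplication by $1_{[0,\sigma _R]}$. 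By \As{H3}, the solution does not explode, so $P^{\muNl}(\sigma _R > T) \to 1 $ as $R\to \infty$. Passing $R\to\infty$ along this exhaustion gives the almost sure identity on all of $[0,T]$. The compatibility of $\sigma _R$ with the time-reversed filtration is automatic because under the stationary law the forward and backward filtrations of $\mathbf{X}^{\nN ,[m]}$ generate the same null sets; this is precisely where reversibility plus the choice $\mathbf{X}^{\nN ,[m]}_0 \sim \mu ^{N,[m]}$ is indispensable, and it is the one point that needs a careful bookkeeping. All other steps are direct appeals to the general Dirichlet-form theory recalled in \cite{FOT.2}.
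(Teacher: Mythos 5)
Your proposal is correct and follows exactly the paper's route: the paper's entire proof is a one-line appeal to the Lyons--Zheng decomposition \cite[Theorem 5.7.1]{FOT.2} applied to the additive functionals $ A^{[f_i]} $ of the $ \mu^{N,[m]} $-symmetric $ m $-labeled diffusion started in equilibrium. Your Steps 1--2 reproduce this, and your Step 3 merely makes explicit the localization needed because $ x_i\otimes 1 $ is only locally in $ \mathcal{D}^{[m]} $, a point the paper leaves implicit in its remark that $ X^{N,i} $ is a Dirichlet process of $ \mathbf{X}^{\nN ,[m]} $.
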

\begin{proof} 
Applying the Lyons-Zheng decomposition \cite[Theorem 5.7.1]{FOT.2} to 
additive functionals $ A ^{[f_i]} $ for $1 \le i\le m$, 
we obtain \eqref{:62a}. 
\end{proof}

\begin{lem} \label{l:63} 
\As{I1} holds. 
\end{lem}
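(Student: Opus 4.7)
\textbf{Proof proposal for \lref{l:63}.} My plan is to reduce everything to bounds on the martingale pieces produced by the Lyons--Zheng decomposition from \lref{l:62}, using that the diffusion matrix $\aN$ is uniformly bounded by $\cref{;3}$ via \eqref{:20o}. Writing
\begin{align*}
X_t^{N,i}-X_u^{N,i} \;=\; \tfrac12(\MNi_t-\MNi_u)\;+\;\tfrac12\bigl(\MNi_{T-t}(r_T)-\MNi_{T-u}(r_T)\bigr)
\end{align*}
and using that $\langle \MNi\rangle_t-\langle \MNi\rangle_u=\int_u^t \aN(X_v^{N,i},\XNidv)\,dv$ has absolute value at most $\cref{;3}|t-u|$, both the forward martingale $\MNi$ and its time-reversal $\MNi(r_T)$ (which is a martingale under the reversed filtration because of the $\muNm$-reversibility of $\mathbf{X}^{\nN,[m]}$ noted around \eqref{:62q}) have quadratic-variation increments bounded by $\cref{;3}|t-u|$ uniformly in $N$.

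For the moment estimate \eqref{:40z}, I would apply the Burkholder--Davis--Gundy inequality to each half of the Lyons--Zheng decomposition, obtaining
\begin{align*}
\mathrm{E}^{\muNl}\bigl[|X_t^{N,i}-X_u^{N,i}|^{4}\bigr]\;\le\; C\,\mathrm{E}^{\muNl}\bigl[(\langle \MNi\rangle_t-\langle \MNi\rangle_u)^{2}\bigr]\;\le\;C\,\cref{;3}^{2}|t-u|^{2},
\end{align*}
which is actually stronger than asked (the cutoff $\{\overline{\mathbf{X}}^{N,m}\le a\}$ is not even needed). Summing over $i=1,\dots,m$ yields \eqref{:40z}. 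For \eqref{:40x}, tightness of the initial positions $X_0^{N,i}$ for $i\le m$ follows from \As{H2}, and Doob's maximal inequality applied to the two martingale halves, combined with the uniform quadratic-variation bound, controls $\sup_{t\le T}|X_t^{N,i}-X_0^{N,i}|$; this shows $\overline{\mathbf{X}}^{N,m}$ is tight.

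The main work is \eqref{:40p}. I would argue as follows: for the $i$-th labeled particle to visit $\Sr$ during $[0,T]$, the martingale displacement must satisfy $\sup_{t\le T}|X_t^{N,i}-X_0^{N,i}|\ge |s_i|-r$, where $s_i=\labN(\mathsf{s})_i$. Applying a Gaussian-type exponential maximal inequality to each half of the Lyons--Zheng decomposition (using that the quadratic variation is bounded above by $\cref{;3}T$) produces the bound
\begin{align*}
P^{\muNl}\bigl(\sup_{t\le T}|X_t^{N,i}-X_0^{N,i}|\ge |s_i|-r \mid X_0^{N,i}=s_i\bigr)\;\le\;C\,\mathrm{Erf}\!\left(\frac{|s_i|-r}{\sqrt{\cref{;3}}\,T}\right).
\end{align*}
Integrating against $\muN$ and summing the tail yields
\begin{align*}
\limsupi{N} P^{\muNl}(\mathcal{L}_r^{N}>L)\;\le\;C\,\limsupi{N}\sum_{i>L}\int_{\SS}\mathrm{Erf}\!\left(\frac{|s_i|-r}{\sqrt{\cref{;3}}\,T}\right)\muN(d\mathsf{s}),
\end{align*}
and the right-hand side vanishes as $L\to\infty$ by assumption \eqref{:21T}. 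This is the step I expect to be the most delicate, since it is the only place where we must couple the label decay of the initial configuration through \As{J5} to the dynamical spreading of the tagged particles; the rest is bookkeeping around BDG and the Lyons--Zheng representation.
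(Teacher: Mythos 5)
Your proposal is correct and follows essentially the same route as the paper: the Lyons--Zheng decomposition of \lref{l:62}, the uniform quadratic-variation bound $\langle \MNi\rangle_t-\langle \MNi\rangle_u\le\cref{;3}|t-u|$ from \eqref{:20o}, a fourth-moment martingale estimate for \eqref{:40z} (the paper uses the explicit time-change $\MNi_t=B_{\langle\MNi\rangle_t}$ where you invoke BDG, which is equivalent here), maximal inequalities plus \As{H2} for \eqref{:40x}, and the $\mathrm{Erf}$ tail bound summed over $i>L$ against \eqref{:21T} for \eqref{:40p}. Your observation that the cutoff $\{\overline{\mathbf{X}}^{N,m}\le a\}$ is not needed in \eqref{:40z} also matches the paper, which proves the unrestricted bound \eqref{:63c}.
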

\begin{proof} 
Although $ \MNi $ is a $ d $-dimensional martingale by definition, 
we assume $ d =1 $ here and prove only this case for simplicity. 
The general case $ d \ge 1 $ can be proved in a similar fashion. 
Let $ \cref{;3}$ be the constant in \eqref{:20o} (under the assumption $ d=1$). Then we note that for $ u \ge v$
\begin{align}\label{:63z}&
0 \le \langle \MNi \rangle _u - \langle \MNi \rangle _v = 
 \int_v^u \ANirs (t) dt \le \cref{;3} (u-v)
\end{align}

We begin by proving \eqref{:40z}. 
From a standard calculation of martingales and \eqref{:63z}, we obtain 
\begin{align}\notag 
\mathrm{E} ^{\muNl }[|\MNi _u - \MNi _v |^{4}] &=
\mathrm{E} ^{\muNl }
[|B _{\langle \MNi \rangle _u} - B _{\langle \MNi \rangle _v }|^{4 }] 
\\&\notag 
= 3\mathrm{E} ^{\muNl }
\bigl[|\langle \MNi \rangle _u - \langle \MNi \rangle _v |^{2 }\bigr] 
\\& \notag 
\le \cref{;53} |u-v|^2
,\end{align}
where $ \Ct{;53}=3\cref{;3}^2 $ and $ \{B_t \}$ 
is a one-dimensional Brownian motion. 
Applying the same calculation to 
$ \MNi _{T-t} (r_{T}) - \MNi _{T} (r_{T}) $, we have 
\begin{align}\label{:63b} & \quad \quad \quad 
\mathrm{E} ^{\muNl }[|\MNi _{T-t} (r_{T}) - \MNi _{T-u} (r_{T})|^{4 }]
 \le \cref{;53} |t-u|^{2} 
\quad \text{ for each $ 0 \le t,u \le T $}
.\end{align}
Combining \eqref{:62a} and \eqref{:63b} with the Lyons-Zheng decomposition 
\eqref{:62a}, we thus obtain 
\begin{align}&\label{:63c} 
\quad \quad \quad 
\mathrm{E} ^{\muNl }[|X _t^{N,i} - X _u^{N,i}|^{4 }] \le 
2\cref{;53} |t-u|^{2} 
\quad \text{ for each $ 0 \le t,u \le T $}
.\end{align}
Taking a sum over $ i=1,\ldots,m$ in \eqref{:63c}, we deduce \eqref{:40z}. 

We next prove \eqref{:40x}. From \eqref{:62a} we have 
\begin{align} \notag 
2 |X_t^{N,i} - X_0^{N,i} | 
& \le |\MNi _t | + | \MNi _{T-t} (r_{T}) - \MNi _{T} (r_{T}) | 
\quad \text{ a.s.}
.\end{align}
From this and a representation theorem of martingales, we obtain 
\begin{align}\label{:63e} & 
P ^{\muNl } (\sup_{t\in[0,T]} |X _t^{N,i} - X_0^{N,i} |\ge a ) 
\\ \notag 
\le & 
P ^{\muNl } (\sup_{t\in[0,T]} |\MNi _t |\ge a ) + 
P ^{\muNl } 
(\sup_{t\in[0,T]} |\MNi _{T-t} (r_{T}) - \MNi _{T} (r_{T}) |\ge a ) 
\\ \notag 
= & 
2 P ^{\muNl } (\sup_{t\in[0,T]} |\MNi _t |\ge a ) 
\\ \notag 
= & 
2 P ^{\muNl } (\sup_{t\in[0,T]} |B_{\langle \MNi \rangle _t }|\ge a ) 
.\end{align}
A direct calculation shows
\begin{align}\label{:63f}&
P ^{\muNl } (\sup_{t\in[0,T]} |B_{\langle \MNi \rangle _t }|\ge a ) 
\le 
P ^{\muNl } (\sup_{t\in[0,\sqrt{\cref{;3}}T]}
 |B_{t}|\ge a ) \le 
\mathrm{Erf} (\frac{a}{\sqrt{\cref{;3}}T}) 
\end{align}
From \eqref{:63e}, \eqref{:63f}, and \As{H2}, we obtain \eqref{:40x}. 

We proceed with the proof of \eqref{:40p}. 
Similarly as \eqref{:63e} and \eqref{:63f}, we deduce 
\begin{align}\label{:63h} 
P ^{\muNl } (\inf_{t\in[0,T]} |X _t^{N,i} |\le r ) 
\le &
P ^{\muNl } 
(\sup_{t\in[0,T]} |X _t^{N,i} - X_0^{N,i} |\ge |X_0^{N,i} |-r ) 
\\ \notag \le &
2 P ^{\muNl } (\sup_{t\in[0,T]} |\MNi _t |\ge |X_0^{N,i} |-r ) 
\\ \notag \le & 
2 \int_{\SS } \mathrm{Erf} (\frac{|s_i|-r}{\sqrt{\cref{;3}}T}) 
\muN (d\mathsf{s})
,\end{align}
where $ s_i = \labN (\mathsf{s})_i$. 
We note that $ X_0^{N,i} =s_i $ by construction. 
From \eqref{:63h} and \eqref{:21T}, we deduce 
\begin{align}\notag 
 \limsupi{\nN } P ^{\muNl } (\mathcal{L}_r^{N} > L ) 
\le \, & \limsupi{\nN }
 \sum_{i>L} P ^{\muNl } (\inf_{t\in[0,T]} |X _t^{N,i} |\le r ) 
\\ \notag \le \, & 2
 \limsupi{\nN } \sum_{i>L} 
\int_{\SS } \mathrm{Erf} (\frac{|s_i|-r}{\sqrt{\cref{;3}}T}) 
\muN (d\mathsf{s}) 
\\ \notag \to &\, 0 \quad (L \to \infty )
.\end{align} 
This completes the proof. 
\end{proof}

\begin{lem} \label{l:64} 
\As{I2} holds. 
\end{lem}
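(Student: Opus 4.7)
The plan is to verify the two parts of \As{I2} in turn, by first reducing both to a uniform $L^{\hat p}$ bound with respect to $\muNone$ via the decomposition of the drift given by \As{J1} and \As{J2}, and then converting the resulting static estimate on $\bNrsp$ into the time-integral bound \eqref{:40i} via the reversibility of the symmetrized $N$-particle system recorded in \rref{r:26}. By \As{J1}, $\bN=\half(\nablax\aN+\aN\dmuN)$; both $\aN$ and $\nablax\aN$ are uniformly bounded on $\S\ts\SSS$ by \eqref{:20o}, and $\muNone(\Sr\ts\SSS)$ is uniformly bounded in $N$ by \eqref{:20g}. Consequently \eqref{:40h} reduces to establishing
\begin{align*}
\supN \|\dmuN\|_{L^{\hat p}(\muNone _r)} < \infty.
\end{align*}

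For this I would fix $s$ large and exploit the splitting $\dmuN=\uN+\ggNs+\rrNs$ of \eqref{:21m}. The components $\uN$ and $\vNs$ are uniformly bounded on $\Sr$ by \As{J2}(1)--(2); the tail $\rrNs$ is uniformly bounded in $L^{\hat p}(\muNone _r)$ by \eqref{:21r}. The particle sum $\sum_i\chi_s(x-y_i)\gN(x,y_i)$ I would split at scale $2^{-\p}$. On the near-diagonal region, the Campbell representation $\int F\,d\muNone=\int\int F(x,y,\cdot)\,\rho_x^{N,1}(y)\,dy\,dx$ reduces the estimate to the quantity in \eqref{:21q}, which becomes arbitrarily small uniformly in $N$ by choosing $\p$ large. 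On the complement, the uniform convergence of $\gN$ in \As{J2}(3) combines with the correlation-function bound \eqref{:20g} (which controls every factorial moment of the number of $y_i$ within distance $s+1$ of $x$) to yield a uniform $L^{\hat p}$ estimate. This proves \eqref{:40h}. Since $\bNrs=E^{\muNonebar}[\bN\mid\mathcal{F}_{r,s}]$ by \eqref{:40a}, conditional Jensen's inequality transfers the bound to $\bNrs$, and \eqref{:41s} propagates it to $\sup_{N,\p}\|\bNrsp\|_{L^{\hat p}(\muNone _r)}<\infty$.

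For the time-integral bound \eqref{:40i} I would appeal to the reversibility of the symmetrization $(\muNl)_{\mathrm{sym}}$ recorded in \rref{r:26}. Since $\sum_{i=1}^N|\bNrsp(X_t^{N,i},\XNidt)|^{\hat p}$ is a symmetric function of the labels, its expectation under $\muNl$ coincides with the expectation under the symmetrization; stationarity of the symmetrized process combined with the Campbell identity $\int F\,d\muNone=\int\sum_j F(s_j,\sss-\delta_{s_j})\,d\muN$ then yields
\begin{align*}
\sum_{i=1}^N E^{\muNl}\Big[\int_0^T|\bNrsp(X_t^{N,i},\XNidt)|^{\hat p}\,dt\Big] = T\int |\bNrsp|^{\hat p}\,d\muNone.
\end{align*}
The right-hand side is uniformly bounded in $N$ and $\p$ by the previous paragraph, and non-negativity of every summand on the left produces the bound for each individual $i$, which is precisely \eqref{:40i}. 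The main obstacle is the near-diagonal estimate in the middle paragraph: for the logarithmic and Riesz kernels to which the theorem is ultimately applied, $\gN$ is not locally $L^{\hat p}$ against Lebesgue measure, and the whole argument depends delicately on the cancellation between the kernel and the Palm one-point density $\rho_x^{N,1}$ encoded in condition \eqref{:21q}.
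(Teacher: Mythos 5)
Your proposal is correct and follows essentially the same route as the paper: \eqref{:40h} is reduced to the static $L^{\hat p}(\muNone_r)$ bounds coming from the decomposition $\dmuN=\uN+\ggNs+\rrNs$ of \As{J2} (your near-/off-diagonal splitting is just the detailed content of the paper's terse appeal to \eqref{:60a}, \eqref{:60b}, \eqref{:21r}), and \eqref{:40i} is obtained by summing over $i$, passing to the one-Campbell measure, and using the $\muNone$-symmetry (stationarity) of the one-labeled process $\mathbf{X}^{N,[1]}$ to bound the time integral by $T\int|\bNrsp|^{\hat p}\,d\muNone$. The only cosmetic difference is that you phrase the stationarity step via the symmetrization of \rref{r:26} and assert an exact identity where the paper writes an inequality for the Dirichlet-form process; both are valid.
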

\begin{proof}
\eqref{:40h} follows from \eqref{:60a}, \eqref{:60b}, and \eqref{:21r}. 
For each $ i \in \N $ we deduce that 
\begin{align}\label{:64a}
 E ^{\muNl } 
[\int_0^T | \bNrsp (X_t^{N,i},\XNidt ) | ^{\phat } dt ] 
\le & \, 
\sum_{i=1}^{\nN } E ^{\muNl } 
[\int_0^T | \bNrsp (X_t^{N,i},\XNidt ) | ^{\phat } dt ] 
\\ \notag = & \, 
E ^{\muNl } [ \sum_{i=1}^{\nN }
\int_0^T | \bNrsp (X_t^{N,i},\XNidt ) | ^{\phat } dt ] 
\\ \notag = &\, 
E ^{\muNone } 
[\int_0^T | \bNrsp (\mathbf{X}_t^{\nN , [1] } ) | ^{\phat } dt 
] 
.\end{align}
Diffusion process $ \mathbf{X}^{\nN , [1] } $ in \eqref{:62p} with $ m = 1 $ 
given by the Dirichlet form $ \mathcal{E}^{\muNone } $ in 
\eqref{:62q} is $ \muNone $-symmetric. 
Hence we see that for all $ 0 \le t \le T $ 
\begin{align} \notag 
 E ^{\muNone } 
[| \bNrsp (\mathbf{X}_t^{\nN , [1] } ) | ^{\phat } 
] 
\le \,
\int_{\S \ts \SS } |\bNrsp | ^{\phat } d\muNone 
.\end{align}
This yields 
\begin{align}\label{:64b}&
\int_0^T dt \, E ^{\muNone } 
[| \bNrsp (\mathbf{X}_t^{\nN , [1] } ) | ^{\phat } 
] 
\le \, T 
\int_{\S \ts \SS } |\bNrsp | ^{\phat } d\muNone 
.\end{align}
From \eqref{:64a} and \eqref{:64b} we obtain \eqref{:40i}. 
\end{proof}

\begin{lem} \label{l:65} 
\As{I3}--\As{I5} hold. 
\end{lem}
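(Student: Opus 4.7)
The plan is to derive the conditions in \0\ in a manner parallel to the proof of \lref{l:64}. The key observation is that, since $ \mathbf{X}^{\nN , [1] } $ is $ \muNone $-symmetric with stationary initial distribution, the stationarity argument used at \eqref{:64a}--\eqref{:64b} yields, for every nonnegative measurable $ f $ on $ \S \ts \SS $,
\begin{align*}
E^{\muNl }\Bigl[\int_0^T |f(X_t^{\nN ,i},\XNidt )|^{\phat } dt\Bigr]
\le T \int_{\S \ts \SS } |f|^{\phat } d\muNone .
\end{align*}
This reduces each dynamical bound in \0\ to a static $ L^{\phat } $-bound against the one-Campbell measure. A parallel inequality for the limit process, with $ \muone $ replacing $ \muNone $, holds because the Dirichlet form $ (\E ^{\mu }, \d ^{\mu }, \Lm )$ furnished by \As{J3} produces a $ \mu $-symmetric diffusion whose tagged component is $ \muone $-stationary.

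First I verify the static conditions. From \As{J1}, $ \bN = \half \{\nablax \aN + \aN \dmuN \}$, and from \eqref{:21m} in \As{J2}, $ \dmuN = \uN + \ggNs + \rrNs $. The uniform convergence of $ \uN , \vNs , \gN $ and their $ x $-derivatives---the last on $ \{|x-y|\ge 2^{-\p }\}$---together with the uniform convergence and boundedness of $ \aN $ and $ \nablax \aN $ from \As{H4} and \As{J1}, yield both \eqref{:41r} and \eqref{:41U}. The $ L^{\phat } $-convergence \eqref{:41s} follows from \eqref{:21q}, which controls precisely the near-diagonal contribution that distinguishes $ \bNrsp $ from $ \bNrs $. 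The static tail condition \eqref{:41u} follows from \eqref{:21r} together with the uniform bound \eqref{:20o} on $ \aN $.

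Next I verify the dynamical conditions. Applying the static-to-dynamical inequality with $ f = \bNrsp - \bNrs $ and invoking \eqref{:41s} gives \eqref{:41t}; the same inequality with $ f = \bNrstail - \btail $ and \eqref{:41u} gives \eqref{:41v}. The analogous inequality for the limit process $ \mathbf{X}^{[1]}$, combined with the $ L^{\phat } $-convergences $ \brsp \to \brs $ as $ \p \to \infty $ and $ \brs \to \bbb $ as $ s \to \infty $ extracted from the proof of \lref{l:50}, gives \eqref{:41tt} and \eqref{:41z}.

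The main obstacle is justifying the static-to-dynamical inequality for the limit: one must identify the tagged limit diffusion as a $ \muone $-stationary process. This is provided by the Dirichlet form of \As{J3} together with the non-collision property \As{J4} and the non-explosion encoded in \As{J5}, which together pin down the tagged limit dynamics as the $ \muone $-symmetric diffusion to which the stationarity argument applies.
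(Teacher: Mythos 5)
Your proposal is correct and follows essentially the same route as the paper: the static conditions are read off from \As{J1}--\As{J2} (with \eqref{:21q} handling the near-diagonal part for \eqref{:41s} and \eqref{:21r} for \eqref{:41u}), and each dynamical condition is reduced to its static counterpart via the stationarity inequality $E^{\muNl}[\int_0^T|f|^{\phat}(X_t^{N,i},\XNidt)\,dt]\le T\int|f|^{\phat}\,d\muNone$ exactly as in \lref{l:64}, with the analogous $\muone$-stationary inequality for the limit diffusion furnished by \As{J3} giving \eqref{:41tt} and \eqref{:41z}. This matches the paper's argument, which it states only in compressed form.
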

\begin{proof} Conditions \eqref{:41r}, \eqref{:41U}, and \eqref{:41s} 
follow from \As{J1}, \As{J2}, \As{I1}, \As{I2}, and \eqref{:21l}. 
Similarly, as \lref{l:64}, we obtain for each $ i \in \N $ 
\begin{align}\label{:65a}&
 E ^{\muNl } 
[\int_0^T | (\7 ) (X_t^{N,i},\XNidt ) | ^{\phat } dt ] 
\le T \int_{\S \ts \SS } |\7 | ^{\phat } d\muNone 
.\end{align}
Hence \eqref{:41t} follows from \eqref{:65a} and \eqref{:41s}. 
\eqref{:41tt} follows from \eqref{:50a} and an inequality similar to \eqref{:65a}. 
We have thus obtained \As{I3}. 
Condition 
\eqref{:41u} follows from \As{J1} and \As{J2}. 
Similarly, as \lref{l:64}, we obtain for each $ i \in \N $ 
\begin{align}&\notag 
 E ^{\muNl } 
[\int_0^T | ( \bNrstail - \btail ) (X_t^{N,i},\XNidt ) | ^{\phat } dt ] 
\le T \int_{\S \ts \SS } | \bNrstail - \btail | ^{\phat } d\muNone 
.\end{align}
This together with \eqref{:41u} implies \eqref{:41v}. Hence we have \As{I4}. 
Similarly as \lref{l:64}, we obtain \eqref{:41z} from \eqref{:41a}. 
We have thus obtained \As{I5}. 
\end{proof}

\bs
\noindent {\em Proof of \tref{l:22}. } 
\0 follows from \lref{l:63}--\lref{l:65}. 
Hence we deduce \tref{l:22} from \tref{l:21}. 
\qed

We finally present a sufficient condition of \eqref{:21T}. 
\begin{lem} \label{l:67}
Assume \As{H1} and \eqref{:25f} for each $ r \in \N $ 
as \sref{s:2}. We take the label $ \labN $ as \eqref{:25g}. 
Then 
\eqref{:21T} holds. 
\end{lem}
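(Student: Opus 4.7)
The plan is to exploit the ordering $|s_1|\le|s_2|\le\cdots$ imposed by \eqref{:25g}, under which the event $\{|s_i|>q\}$ coincides with $\{\mathsf{s}(\S _q)<i\}$ for any cut-off radius $q\in\N$. I introduce such a $q$, to be sent to infinity at the very end. Bounding $\mathrm{Erf}\le 1$ on the first piece below, I split
\begin{align*}
\sum_{i>L}\mathrm{Erf}\Bigl(\frac{|s_i|-r}{\sqrt{\cref{;3}}T}\Bigr)
&\le\#\{i>L:|s_i|\le q\}+\sum_{i:\,|s_i|>q}\mathrm{Erf}\Bigl(\frac{|s_i|-r}{\sqrt{\cref{;3}}T}\Bigr)\\
&=(\mathsf{s}(\S _q)-L)^{+}+\int_{\S \setminus\S _q}\mathrm{Erf}\Bigl(\frac{|x|-r}{\sqrt{\cref{;3}}T}\Bigr)\,\mathsf{s}(dx),
\end{align*}
the equality for the first piece being a direct consequence of the ordering \eqref{:25g}.

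Integrating against $\muN$ and using the defining property of the one-correlation function, the second summand becomes $\int_{\S \setminus\S _q}\mathrm{Erf}(\,\cdot\,)\rho^{\nN ,1}(x)\,dx$, which vanishes upon taking $\limsupi{N}$ followed by $\limi{q}$ by assumption \eqref{:25f}. For the first summand, the correlation bound \eqref{:20g} of \As{H1} yields, for each fixed $q$,
$$\sup_{\nN } E^{\muN }[\mathsf{s}(\S _q)]<\infty,\qquad \sup_{\nN } E^{\muN }[\mathsf{s}(\S _q)^{2}]<\infty,$$
since the factorial moments equal $\int_{\S _q^{n}}\rho^{\nN ,n}\,d\mathbf{x}$ for $n=1,2$. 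By Cauchy--Schwarz and Markov's inequality,
$$\int(\mathsf{s}(\S _q)-L)^{+}\,d\muN \le E^{\muN }[\mathsf{s}(\S _q);\mathsf{s}(\S _q)>L]\le\sqrt{E^{\muN }[\mathsf{s}(\S _q)^{2}]\cdot E^{\muN }[\mathsf{s}(\S _q)]/L}\longrightarrow 0$$
as $L\to\infty$, uniformly in $\nN $ for each fixed $q$.

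Combining the two bounds, I apply the limits in the order $\nN \to\infty$, $L\to\infty$, $q\to\infty$: $\limsupi{N}$ preserves the decomposition, $\limi{L}$ kills the first summand uniformly in $\nN $, and $\limi{q}$ kills the second summand via \eqref{:25f}. Since the left-hand side of \eqref{:21T} does not involve $q$, this chain of inequalities delivers the claim. I expect no substantive obstacle; the only point requiring care is the uniform-in-$\nN $ bound on the second factorial moment of $\mathsf{s}(\S _q)$, which however follows at once from the polynomial-in-$n$ growth of the correlation functions in \eqref{:20g}.
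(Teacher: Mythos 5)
Your proof is correct, and it takes a genuinely different route from the paper's. The paper derives tightness of $\{\muN \}$ from \As{H1}, uses it to construct an $L$-dependent radius $\mathsf{p}(L)<L$ with $\mathsf{p}(L)\to\infty$ and $\limi{L}\limsupi{N}\muN (\mathsf{s}(\S _{\mathsf{p}(L)})\ge L)=0$, and then splits the sum according to whether $s_L\in \S _{\mathsf{p}(L)}$ (equivalently $\mathsf{s}(\S _{\mathsf{p}(L)})\ge L$), controlling the contribution on that event by the total mass $\cref{;67}(N)$ times its probability, and the complementary contribution by the integral of $\mathrm{Erf}$ over $\S \setminus\S _{\mathsf{p}(L)}$. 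You instead keep the cut-off radius $q$ fixed (decoupled from $L$), observe via the ordering \eqref{:25g} that the overshoot inside $\S _q$ is exactly $(\mathsf{s}(\S _q)-L)^{+}$, and kill it uniformly in $N$ by a Cauchy--Schwarz/Markov argument resting on the uniform first and second factorial moment bounds supplied by \eqref{:20g}; the exterior piece is handled by \eqref{:25f} exactly as in the paper. Your version buys two things: it avoids the diagonal construction of $\mathsf{p}(L)$ entirely, and it replaces the paper's bound of the form $\mathrm{E}[\,\int \mathrm{Erf}\,\mathsf{s}(dx);A\,]\le \cref{;67}(N)\,\muN (A)$ --- which as literally written needs a decoupling justification --- by an honest moment estimate. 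The cost is negligible: you invoke the $n=2$ case of \eqref{:20g}, which the paper's argument does not need. Both arguments use \eqref{:25g} in the same essential way, namely that $\{|s_i|\le q\}$ is the initial segment of length $\mathsf{s}(\S _q)$.
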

\begin{proof} 
Let 
 $ \Ct{;67} = \cref{;67}(\nN )$ be such that 
 \begin{align}\notag 
 \cref{;67} &=\, 
\int_{\S }
\mathrm{Erf} (\frac{|x|-r}{\sqrt{\cref{;3}}T}) \rho^{\nN , 1} (x) dx 
.\end{align}
Let $ \Ct{;68}=\limsupi{N}\cref{;67}(\nN )$. 
Then from \As{H1} and \eqref{:25f}, we see that for each large 
$ r $
\begin{align}\label{:67x}
\cref{;68}& \le \limi{\nN } 
\int_{\Sr } 
\mathrm{Erf} (\frac{|x|-r}{\sqrt{\cref{;3}}T}) \rho^{\nN , 1} (x) dx 
+ 
\limsupi{\nN } 
\int_{\S \backslash \Sr } 
\mathrm{Erf} (\frac{|x|-r}{\sqrt{\cref{;3}}T}) \rho^{\nN , 1} (x) dx 
\\ \notag &
< \infty 
.\end{align}

From \As{H1} we see that $ \{\muN \}_{\nN \in \N }$
 converges to $ \mu $ weakly. 
Hence $ \{\muN \}_{\nN \in \N }$ is tight. 
This implies that there exists a sequence of increasing sequences of 
natural numbers $ \mathbf{a}_n = \{ a_n(m) \}_{m=1}^{\infty} $ 
such that $ \mathbf{a}_n < \mathbf{a}_{n +1}$ 
and that for each $ m $ 
\begin{align*}&
\limi{n} \limsupi{\nN } \muN (\mathsf{s} (\S _{m}) \ge a_n(m) ) = 0 
. \end{align*}
Without loss of generality, we can take 
$ a_n(m) > m $ for all $ m , n \in \N $. 
Then from this, we see that there exists a sequence 
$ \{\mathsf{p}(L) \}_{L\in\N }$ converging to $ \infty $ 
 such that $ \mathsf{p}(L) < L $ for all $ L \in \N $ and that 
\begin{align} \label{:67c}&
\limi{L}\limsupi{N} \muN (\mathsf{s}(\S _{\mathsf{p}(L)}) \ge L ) = 0 
.\end{align}

Recall that the label $ \labN (\mathsf{s}) = (s_i)_{i\in\N } $ 
 satisfies $ |s_1|\le |s_2|\le \cdots $. 
Using this, we divide the set $ \SS $ as in such a way that 
\begin{align*}&
\text{$ \{ s_L \in \S _{\mathsf{p}(L)} \} $ and 
$ \{ s_L \not\in \S _{\mathsf{p}(L)} \} $.}
\end{align*}
Then $ \mathsf{s} \in \{ s_L \in \S _{\mathsf{p}(L)} \} $ if and only if 
$ \mathsf{s}(\S _{\mathsf{p}(L)}) \ge L $. Hence 
we easily see that 
\begin{align}\notag &
\sum_{i>L} 
\int_{\SS } \mathrm{Erf} (\frac{|s_i|-r}{\sqrt{\cref{;3}}T}) 
\muN (d\mathsf{s}) 
\le 
\cref{;67}(\nN )
 \muN (\{ \mathsf{s} (\S _{\mathsf{p}(L)}) \ge L \} )
+
\int_{\S \backslash \S _{\mathsf{p}(L)}} 
\mathrm{Erf} (\frac{|x|-r}{\sqrt{\cref{;3}}T}) \rho^{\nN , 1} (x) dx
.\end{align}
Taking the limits on both sides, we obtain 
\begin{align}& \notag 
\limi{L}\limsupi{N}
\sum_{i>L} 
\int_{\SS } \mathrm{Erf} (\frac{|s_i|-r}{\sqrt{\cref{;3}}T}) 
\muN (d\mathsf{s}) 
\le
\\\notag 
 \, \cref{;68}&
 \limi{L}\limsupi{N}
\muN (\{ \mathsf{s} (\S _{\mathsf{p}(L)}) \ge L \} ) 
 + 
 \limi{L}\limsupi{N}
\int_{\S \backslash \S _{\mathsf{p}(L)}} 
\mathrm{Erf} (\frac{|x|-r}{\sqrt{\cref{;3}}T}) \rho^{\nN , 1} (x) dx
.\end{align}
Applying \eqref{:67x} and \eqref{:67c} to the second term, and 
\eqref{:25f} to the third, we deduce \eqref{:21T}. 
\end{proof}

\section{Examples}\label{s:5}
The finite-particle approximation in \tref{l:22} contains many examples 
 such as Airy$ _{\beta }$ point processes ($ \beta = 1,2,4$), Bessel$ _{2,\alpha }$ point process, the Ginibre point process, the Lennard--Jones 6-12 potential, and Riesz potentials.The first three examples are related to random matrix theory and the interaction 
$ \Psi (x) = - \log |x| $, the logarithmic function. 
 We present these in this section. For this we shall confirm the assumptions in \tref{l:22}, 
that is, assumptions \As{H1}--\As{H4} and \As{J1}--\As{J6}.

Assumption \As{H1} is satisfied for the first three examples \cite{mehta,Sos00}. 
As for the last two examples, we assume \As{H1}. We also assume \As{H2}. 
\As{H3} can be proved in the same way as given in \cite{o-t.tail}. 
In all examples, $ \aaa $ is always a unit matrix. Hence it holds that 
\As{H4} is satisfied and that 
\eqref{:21j} in \As{J1} becomes $ \bbb ^{\nN } = \half \dmuN $. 
From this we see that SDEs \eqref{:21z} and \eqref{:21w} become 
\begin{align}\label{:90b}
dX_t^{N,i}=\, & dB_t^{N,i}+\half \dmuN (X_t^{N,i}, \XNidt)\,dt\quad (1\le i\le N)
,\\\label{:90a}
dX_t^i=\, & dB_t^i+\half \dmu (X_t^i, \Xidt)\,dt\quad (i\in\N)
,\end{align}
where $ \dmu $ is the logarithmic derivative of $ \mu $ given by \eqref{:21v}. 
Assumption \As{J6} for the first three examples with $ \beta = 2 $ 
can be proved in the same way as \cite{o-t.tail} as we explained in \rref{r:K5}. 
Thus, in the rest of this section, our task is to check assumptions \As{J2}--\As{J5}.

\subsection{The Airy$ _{\beta }$ interacting Brownian motion ($ \beta = 1,2,4$)} 
\label{s:52} 
Let $ \mu _{\mathrm{Airy}, \beta } ^{\nN }$ and 
$\mu _{\mathrm{Airy},\beta }$ be as in \sref{s:1}. 
Recall SDEs \eqref{:10l} and \eqref{:10H} in \sref{s:1}. 
Let $ \mathbf{X}^{\nN } = (X^{\nN ,i})_{i=1}^{\nN }$ and 
$ \mathbf{X}=(X^i)_{i\in\N }$ be solutions of 
\begin{align} \tag{\ref{:10l}} 
dX_t^{\n ,i} = \,& dB_t^i + 
\frac{\beta }{2} \sum_{j=1, \,  j\not= i}^{\n } 
\frac{1}{X_t^{\n ,i} - X_t^{\n ,j} } dt 
- \frac{\beta }{2 } 
\{ \n ^{1/3} + \frac{1}{2\n ^{1/3}}X_t^{\n ,i} \}dt 
,\\
\tag{\ref{:10H}} 
dX_t^i= \, & dB_t^i+
\frac{\beta }{2}
\limi{r} \{ \sum_{|X_t^j|<r, j\neq i} \frac{1}{X_t^i-X_t^j} -\int_{|x|<r}\frac{\varrho (x)}{-x}\,dx\}dt \quad (i\in\N)
.\end{align}
\begin{prop}\label{l:32}
If $ \beta = 1,4$, then each sub-sequential limit of solutions $  \mathbf{X}^{\nN } $ 
of \eqref{:10l} 
satisfies \eqref{:10H}. 
If $ \beta =2$, then the full sequence converges to \eqref{:10H}. 
\end{prop}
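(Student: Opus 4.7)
The strategy is to verify the hypotheses of Theorem~\ref{l:22} for the Airy$_\beta$ point processes with $\beta = 1,2,4$, and then invoke \As{J6} only in the $\beta=2$ case. Since $\sigma^N \equiv \sigma \equiv \mathrm{Id}$ in this example, \As{H4} is immediate and \eqref{:21j} reduces the drift to $\bbb^N = \tfrac12 \dmuN$, so the finite SDE \eqref{:10l} must be recognized as \eqref{:90b} by an explicit computation of the logarithmic derivative of $\check\mu_{\mathrm{Airy},\beta}^N$. The candidate limit ISDE \eqref{:10H} will then match \eqref{:90a} with the logarithmic derivative given by \eqref{:21v}.

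First, I would dispatch the easy hypotheses. \As{H1} follows from the classical results on $\mu_{\mathrm{Airy},\beta}^N$ and $\mu_{\mathrm{Airy},\beta}$ in \cite{AGZ,mehta,rrv.airy}: the $n$-point correlations converge uniformly on compacts with the required factorial bound. \As{H2} is arranged by taking the canonical decreasing label $s_1 > s_2 > \cdots$, which is $\mu_{\mathrm{Airy},\beta}$-a.s.\ well-defined by \As{J4} and makes the marginals of $\mu^N\circ\ell_{N,m}^{-1}$ weakly convergent. \As{H3} (non-explosion and non-collision for the finite SDE \eqref{:10l}) is standard Dyson-type analysis. \As{J3} is known because $\mu_{\mathrm{Airy},\beta}$ is a $(\Phi,\Psi)$-quasi-Gibbs measure for $\beta=1,2,4$ (see \cite{o.rm,o.rm2}). \As{J4} (non-collision) and \As{J5} (the error-function tail bound) follow from the determinantal/quaternion-determinantal structure together with \lref{l:67} applied to the exponentially-decaying one-point function on the right and the $\sqrt{-x}$-growth semicircle density on the left; one checks \eqref{:25f} using $\rho^{N,1} \to \rho^1$ uniformly on compacts plus a uniform tail bound for the Gaussian ensemble.

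The heart of the proof is \As{J1} and \As{J2}. Differentiating $\log\check\mu_{\mathrm{Airy},\beta}^N$ yields
\begin{align*}
\dmuN(x,\mathsf{y}) \;=\; \beta\!\!\sum_{i}\frac{1}{x-y_i} \;-\; \beta\bigl\{ N^{1/3} + \tfrac{1}{2N^{1/3}}x\bigr\},
\end{align*}
so with $u^N(x) = -\beta\{N^{1/3}+x/(2N^{1/3})\}$, $g^N(x,y) = \beta/(x-y)$, and an appropriate cutoff $\chi_s$, one reads off the decomposition \eqref{:21m}. The key identity to verify \As{J2}(4) is the well-known compensation
\begin{align*}
u^N(x) + \beta\!\!\int_{|x-y|<s,\,|y|\le R_N}\!\!\frac{\rho^{N,1}(y)}{x-y}\,dy \;\longrightarrow\; -\beta\int_{|x|<s}\frac{\varrho(x-\cdot)}{\cdot}\,d\cdot ,
\end{align*}
which identifies $w(x)$ as the principal-value integral against the shifted semicircle $\varrho$. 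This is precisely the cancellation mechanism underlying \eqref{:10H}, and carrying it out requires the uniform convergence of $\rho^{N,1}$ to $\rho^1$ near the soft edge together with the asymptotic expansion $\rho^{N,1}(y) \sim \sqrt{-y}/\pi$ on bounded sets; the short-range integrability condition \eqref{:21q} is handled by the level repulsion $|x-y|^\beta$ in the density, which forces $\rho^{N,1}_x(y)\to 0$ as $y\to x$ at a rate sufficient for $\beta \ge 1$.

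Once \As{H1}--\As{H4} and \As{J1}--\As{J5} are in place, Theorem~\ref{l:22} yields tightness of $\{\mathbf{X}^N\}$ in $C([0,\infty);\S^\N)$ and shows that every limit point solves the ISDE \eqref{:90a}, which by the identification of $\dmu$ above is exactly \eqref{:10H}. This proves the first assertion for $\beta=1,4$. For $\beta=2$, tail triviality of the determinantal point process $\mu_{\mathrm{Airy},2}$ \cite{o-o.tt} together with the marginal assumptions of \cite{o-t.tail} gives \As{J6}, and Theorem~\ref{l:22} then upgrades sub-sequential convergence to full convergence \eqref{:21a}. The main obstacle is the verification of \As{J2}(4), i.e.\ the delicate cancellation between the linear-in-$N$ drift $-\beta N^{1/3}$ and the long-range Coulomb sum; all other hypotheses are either immediate under the unit-diffusion setting or follow from known random-matrix inputs and the general machinery in \cite{o.isde,o.rm,o.rm2,o-t.tail}.
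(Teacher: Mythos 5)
Your proposal is correct and follows essentially the same route as the paper: verify \As{H1}--\As{H4} and \As{J1}--\As{J5} for $\mu_{\mathrm{Airy},\beta}^{N}$ (deferring most of them, including the delicate cancellation in \As{J2}(4), to \cite{o-t.airy}), apply \tref{l:22}, and invoke tail triviality for \As{J6} only when $\beta=2$. The one point the paper treats in detail and you pass over quickly is \eqref{:21q}: the paper makes your ``level repulsion'' remark precise via the uniform-in-$N$ bounds $\inf_N\rho^{N,1}\ge c>0$ and $\sup_N\rho^{N,2}(x,y)\le C|x-y|$ on compacts, so that $\rho_x^{N,1}(y)=\rho^{N,2}(x,y)/\rho^{N,1}(x)\lesssim|x-y|$ and the integrand $|x-y|^{1-\hat p}$ is integrable near the diagonal --- your sketch gestures at exactly this mechanism, so no gap.
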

\begin{proof} 
Conditions  \As{J2}--\As{J5} other than \eqref{:21q} can be proved in the same way 
as given in \cite{o-t.airy}. In \cite{o-t.airy}, we take $ \chi_s (x) = 1_{\S _s} (x)$; its 
adaptation to the present case is easy. 

We consider estimates of correlation functions such that 
\begin{align}\label{:92aa}&
\inf_{\nN \in \N }\rho_{\mathrm{Airy}, \beta }  ^{\nN ,1} (x) 
\ge 
\cref{;32a} \quad \text{ for all } x \in \Sr 
\\\label{:92a}& 
\sup_{\nN \in \N }\rho_{\mathrm{Airy}, \beta }  ^{\nN ,2} (x,y)
 \le 
\cref{;32} |x-y| \quad \text{ for all } x,y \in \Sr 
,\end{align}
where $ \Ct{;32a}(r)$ and $ \Ct{;32}(r) $ are positive constants. 
The first estimate is trivial because $ \rho_{\mathrm{Airy}, \beta }  ^{\nN ,1} $ 
converges to $ \rho_{\mathrm{Airy}, \beta }  ^{1} $ uniformly on $ \Sr $ and, 
all these correlation functions are continuous and positive. 
The second estimate follows 
from the determinantal expression of the 
correlation functions and bounds on derivative of determinantal kernels. 
Estimates needed for the proof can be found in \cite{o-t.airy} and 
the detail of the proof of \eqref{:92a} is left to the reader.

Equation \eqref{:21q} follows from \eqref{:92aa} and \eqref{:92a}. 
Indeed, the integral in \eqref{:21q} is taken on the bounded domain and 
the singularity of integral of $ \gN (x,y)=\beta /(x-y)$ near $ \{x=y\}$ is logarithmic. 
Furthermore,  
the one-point correlation function $ \rho_{\mathrm{Airy}, \beta, x  }  ^{\nN ,1}$ 
of the reduced Palm measure conditioned at $ x$ 
is controlled by the upper bound of the two-point correlation function and 
the lower bound of one-point correlation function because
\begin{align*}&
\rho_{\mathrm{Airy}, \beta, x  }  ^{\nN ,1} (y) =
 \frac{\rho_{\mathrm{Airy}, \beta }  ^{\nN ,2} (x,y)}
{ \rho_{\mathrm{Airy}, \beta }  ^{\nN ,1} (x)}
.\end{align*}
Using these facts, we see that \eqref{:92aa} and \eqref{:92a} imply \eqref{:21q}. 
\end{proof}

\subsection{The Bessel$ _{2,\alpha }$ interacting Brownian motion} \label{s:53}
Let $\S =[0,\infty)$ and $\alpha \in[1,\infty)$. 
We consider the Bessel$ _{2,\alpha }$ point process 
$ \mu _{\mathrm{bes},2, \alpha }$ and their $ \nN $-particle version. 
The Bessel$ _{2,\alpha }$ point process $ \mu _{\mathrm{bes},2,\alpha }$ is 
a determinantal point process with kernel 
\begin{align}\label{:93p}
\mathsf{K} _{\mathrm{bes},2,\alpha } (x,y) & = 
 \frac{J_{\alpha } (\sqrt{x}) \sqrt{y} J_{\alpha }' (\sqrt{y}) - 
 \sqrt{x} J_{\alpha }' (\sqrt{x})  J_{\alpha }(\sqrt{y})
 }{2(x-y)}
\\ \notag & = 
\frac{\sqrt{x} J_{\alpha +1} (\sqrt{x}) J_{\alpha } (\sqrt{y}) - 
 J_{\alpha } (\sqrt{x}) \sqrt{y} J_{\alpha +1}(\sqrt{y})
 }{2(x-y)}
,\end{align}
where $ J_{\alpha }$ is the Bessel function of order $ \alpha $ \cite{Sos00,o-h.bes}. 
The density $ \mathsf{m} _{\alpha }^{\n }(\mathbf{x}) d\mathbf{x}$ 
of the associated $ \nN $-particle systems $ \mu _{\mathrm{bes},2,\alpha }^{\nN }$ 
is given by 
\begin{align}\label{:13}& 
\mathsf{m} _{\alpha }^{\n} (\mathbf{x}) =
\frac{1}{\mathcal{Z} _{\alpha }^{\n }}
 e^{-\sum_{i=1}^{\n }x_i/4\n  } \prod_{j=1}^{\n }x_j^{\alpha }
\prod_{k<l}^{\n } |x_k-x_l|^{2 } 
.\end{align}
It is known that $ \mu _{\mathrm{bes},2,\alpha }^{\nN }$  is also determinantal \cite[945p]{Sos00} and \cite[91p]{forrester}
The Bessel$ _{2,\alpha }$ interacting Brownian motion is 
given by the following \cite{o-h.bes}.
\begin{align}\label{:93a}
dX_t^{N,i} = &dB_t^i+ \{-\frac{1}{8N}+ 
\frac{\alpha }{2X_t^{N,i}} + 
\sum_{j=1, j\neq i}^{\nN } 
 \frac{1}{X_t^{N,i}-X_t^{N,j}}\} dt \quad (1\le i\le N)
,\\
\label{:93b}
dX_t^i= &dB_t^i+  
\{ \frac{\alpha }{2X_t^i} + 
\sum_{ j\neq i}^\infty \frac{1}{X_t^i-X_t^j} \} dt \quad (i\in\N)
.\end{align}
This appears at the hard edge of one-dimensional systems. 
\begin{prop} Assume $ \alpha > 1 $. 
Then \eqref{:21a} holds for \eqref{:93a} and \eqref{:93b}. 
\end{prop}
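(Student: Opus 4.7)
The plan is to apply \tref{l:22}. The preliminary checks made at the start of this section cover \As{H1}--\As{H4}, \As{J1}, and \As{J6} (the last via tail triviality of determinantal point processes \cite{o-o.tt} and \rref{r:K5}), so what remains is to verify \As{J2}--\As{J5} for $ \mu _{\mathrm{bes},2,\alpha }$. From \eqref{:93a} and $ \aN = I $ the logarithmic derivative reads
$$
\dmuN (x,\mathsf{y}) = -\frac{1}{4N} + \frac{\alpha }{x} + \sum_{i} \frac{2}{x-y_i},
$$
and the decomposition \eqref{:21m} is naturally taken with $ u^N(x) = -1/(4N) + \alpha /x $, $ v^N \equiv 0 $, $ g^N(x,y) = 2/(x-y) $, and limiting tail $ w \equiv 0 $ (consistent with the absence of a tail correction in \eqref{:93b}). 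The uniform convergences in items (1)--(3) of \As{J2} other than \eqref{:21q} are immediate from these explicit formulas, and item (4) reduces to a uniform control of $ \rho^{N,1}_{\mathrm{bes},2,\alpha } $ at infinity, which can be read off from \eqref{:93p} and standard asymptotics of the Bessel function $ J_\alpha $.

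The heart of the verification is \eqref{:21q}. Following the Airy argument, I would derive uniform-in-$N$ correlation-function bounds
$$
\inf_{N\in \N } \rho^{N,1}(x) \ge c_1(r,\delta ) > 0, \qquad \sup_{N\in \N } \rho^{N,2}(x,y) \le c_2(r)\,|x-y|
$$
on the region $ \{(x,y) \in \S ^2 : \delta \le x,y \le r\} $ for every $ \delta \in (0,r) $, using \eqref{:93p} together with derivative bounds on $ J_\alpha $ (these are Christoffel--Darboux-type estimates away from the hard edge). Combined with the hard-edge behavior $ \rho^{N,1}(x) \asymp x^\alpha $ near $ x = 0 $ uniformly in $ N $, and the inequality $ \alpha > 1 $, which ensures $ \int_0^1 x^{\alpha - \hat p}\,dx < \infty $ for a suitable $ \hat p \in (1,\alpha + 1) $, the identity $ \rho^{N,1}_x(y) = \rho^{N,2}(x,y)/\rho^{N,1}(x) $ yields
$$
\int_{x\in \Sr ,\,|x-y|\le 2^{-\p }} |g(x,y)|^{\hat p}\, \rho^{N,1}_x(y)\, dx\,dy \ \le\ C \int_{|x-y|\le 2^{-\p }} |x-y|^{1-\hat p}\, dy \ \longrightarrow 0
$$
as $ \p \to \infty $, proving \eqref{:21q}. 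Item (4) of \As{J2} follows by a similar Palm-integrability estimate using the polynomial decay at infinity.

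Assumption \As{J3} is supplied by the quasi-Gibbs property of $ \mu _{\mathrm{bes},2,\alpha } $ proved in \cite{o-h.bes}; \As{J4} is given by the non-collision and non-hitting-of-$\{0\}$ results of the same reference (both of which require $ \alpha > 1 $); and \As{J5} reduces via \lref{l:67} to \eqref{:25f}, which is verified from the polynomial growth of $ \rho^{N,1} $ at infinity and the Gaussian decay of $ \mathrm{Erf} $. With all hypotheses checked, \tref{l:22} delivers the convergence \eqref{:21a}.

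The main obstacle is the dual role of the hard edge at $ \{0\} $: the drift $ \alpha /(2x) $ in $ \dmuN $ blows up there, creating both an analytic issue (Palm $ L^{\hat p} $-integrability of $ 1/x $) and a dynamic one (non-hitting of $ \{0\} $ by the tagged particle). It is precisely this pair of issues that forces the strict inequality $ \alpha > 1 $, and they must be coordinated with the uniform-in-$N$ control of $ \rho^{N,2}_{\mathrm{bes},2,\alpha } $ near the diagonal that underlies \eqref{:21q}.
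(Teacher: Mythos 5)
Your proposal is correct in strategy and reaches the right conclusion, but it takes a more self-contained route than the paper. The paper's proof is essentially a citation: it notes that \As{J2}--\As{J5}, except \eqref{:21T}, are already established for $ \mu _{\mathrm{bes},2,\alpha }$ in \cite{o-h.bes}, and then disposes of \eqref{:21T} by checking the hypotheses of \lref{l:67} (which you also do, via \eqref{:25f} and the decay of $ \rho ^{\nN ,1}$). What you add is a reconstruction of the \cite{o-h.bes} verifications by transplanting the Airy argument of \pref{l:32}: the explicit decomposition of $ \dmuN $, the uniform lower bound on $ \rho ^{\nN ,1}$ and the near-diagonal bound $ \rho ^{\nN ,2}(x,y)\le c\,|x-y|$ feeding into \eqref{:21q} through the Palm identity $ \rho _x^{\nN ,1}(y)=\rho ^{\nN ,2}(x,y)/\rho ^{\nN ,1}(x)$, and the role of $ \alpha >1$ in the $ L^{\phat }$-integrability of $ \alpha /x $ near the hard edge. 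This buys transparency about exactly where $ \alpha >1$ enters, at the cost of some glossed details: in particular, $ u^{\nN }(x)=-1/4N+\alpha /x $ is not $ C^1$ up to the boundary point $ 0\in \S =[0,\infty )$, so the "immediate" verification of \As{J2}\thetag{1} on compact sets of $ \S $ needs the same boundary care (restriction to compacts of $ \S _{\mathrm{int}}$ together with non-hitting of $ \{0\}$) that \cite{o-h.bes} supplies; and the claim that \As{J2}\thetag{4} with $ w\equiv 0$ "reduces to a uniform control of $ \rho ^{\nN ,1}$ at infinity" suppresses the variance-type control of the tail sum $ \rrNs $ that an $ L^{\phat }$ bound actually requires. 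Neither point undermines the argument, since both are exactly what the cited reference establishes, but as written they are asserted rather than proved.
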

\begin{proof}
\As{J2}--\As{J5} except \eqref{:21T} are proved in \cite{o-h.bes}. 
We easily see that the assumptions of \lref{l:67} hold and yield \eqref{:21T}. 
We thus obtain \As{J5}. 
\end{proof}
\begin{rem}\label{r:33} 
There exist other natural ISDEs and 
$ \nN $-particle systems related to the Bessel point processes. 
They are the non-colliding square Bessel processes and their square root. 
The non-colliding square Bessel processes are reversible to the Bessel$ _{2,\alpha }$ point processes, but  the associated Dirichlet forms are different from the Bessel$ _{2,\alpha }$ interacting Brownian motion. 
Indeed, the coefficients 
$ \mathsf{a}^{\nN } $ and $  \mathsf{a} $ in \sref{s:2} are taken to be 
$  \mathsf{a}^{\nN }(x.\mathsf{y}) =  \mathsf{a}(x.\mathsf{y})= 4x $.  
On the other hand, each square root of the non-colliding Bessel processes 
is not reversible to 
the Bessel$ _{2,\alpha }$ point processes, but  has the same type of Dirichlet forms as 
the Bessel$ _{2,\alpha }$ interacting Brownian motion. In particular, the coefficients 
$ \mathsf{a}^{\nN } $ and $  \mathsf{a} $ in \sref{s:2} are taken to be 
$  \mathsf{a}^{\nN }(x.\mathsf{y}) =  \mathsf{a}(x.\mathsf{y})= 1 $.  
That is, they are constant time change of 
distorted Brownian motion with the standard square field.

We refer to \cite{KT11,KT11-b,o-t.sm} for these processes. 
For reader's convenience we provide an ISDE describing the non-colliding square Bessel processes and their square root. We note that SDE \eqref{:93d} is 
a constant time change of that in \cite{KT11-b,o-t.sm}. 
Let $ \mathbf{Y}^{N} = (Y^{N,i})_{i=1}^N $ and 
$ \mathbf{Y} = (Y^i)_{i\in\N } $ be the non-colliding square Bessel processes. 
Then for $ 1\le i\le N$ 
\begin{align}\label{:93c}
dY_t^{N,i} = & 2 \sqrt{Y_t^{N,i}}dB_t^i+
4 \{-\frac{Y_t^{N,i}} {8N}+ \frac{\alpha +1}{2} + 
\sum_{j=1, j\neq i}^{\nN } 
\frac{ Y_t^{N,i} }{Y_t^{N,i}-Y_t^{N,j}}\} dt 
,\\
\label{:93d}
dY_t^i= & 2\sqrt{Y_t^{i}} dB_t^i+  
4 \{  \frac{\alpha +1}{2} + 
\sum_{ j\neq i}^\infty \frac{ Y_t^{i}}{Y_t^i-Y_t^j} \} dt \quad (i\in\N)
.\end{align}
Let $ \mathbf{Z}^{N} = (Z^{N,i})_{i=1}^N $ and $ \mathbf{Z} = (Z^i)_{i\in\N } $ be square root of the non-colliding square Bessel processes. Then applying It$ \hat{\mathrm{o}}$ formula we obtain from \eqref{:93c} and \eqref{:93d} 
\begin{align}\label{:93e}
dZ_t^{N,i} = & dB_t^i+ \{-\frac{Z_t^{N,i}}{ 4 N}+ 
\frac{\alpha + \half }{Z_t^{N,i}} + 
\sum_{j=1, j\neq i}^{\nN } 
 \frac{2 Z_t^{N,i}} {(Z_t^{N,i})^2-(Z_t^{N,j})^2}\} dt \ (1\le i\le N)
,\\
\label{:93f}
dZ_t^i= & dB_t^i+  
\{ \frac{\alpha + \half }{ Z_t^i} +
 \sum_{ j\neq i}^\infty \frac{ 2 Z_t^{N,i}}{(Z_t^i)^2-(Z_t^j)^2} \} dt \quad (i\in\N)
.\end{align}

We remark that \tref{l:22} can be applied to the non-colliding square Bessel processes because the equilibrium states are the same as the Bessel interacting Brownian motion and 
coefficients are well-behaved as 
$  \mathsf{a}^{\nN }(x.\mathsf{y}) =  \mathsf{a}(x.\mathsf{y})= 4x $. 
\end{rem}

\subsection{The Ginibre interacting Brownian motion} \label{s:54}
Let $\S =\R ^2$. 
Let $  \mu _{\mathrm{gin}}^{\nN }  $ and $  \mu _{\mathrm{gin}} $ 
be as in \sref{s:1}. 
Let $ \Phi ^{\nN } = |x|^2$ and $\Psi (x)=-\log |x|$. 
Then the $ \nN $-particle systems are given by 
\begin{align}\tag{\ref{:10r}}
dX_t^{N,i} = & dB_t^i-X_t^{N,i}dt + 
\sum_{j=1, j\neq i}^{\nN } 
\frac{X_t^{N,i}-X_t^{N,j}}{|X_t^{N,i}-X_t^{N,j}|^2}dt 
\quad (1\le i\le N)
.\end{align}
The limit ISDEs are
\begin{align}\tag{\ref{:10s}}
dX_t^i= & 
dB_t^i+\limi{r} \sum_{|X_t^i-X_t^j|<r, j\neq i} 
\frac{X_t^i-X_t^j}{|X_t^i-X_t^j|^2}dt \quad (i\in\N)
\\\intertext{and } \tag{\ref{:10t}}
dX_t^i=
& 
dB_t^i - X_t^idt + 
\limi{r} \sum_{|X_t^j|<r, j\neq i} \frac{X_t^i-X_t^j}{|X_t^i-X_t^j|^2}dt \quad (i\in\N)
.\end{align}
\begin{prop}\label{l:33} 
\eqref{:21a} holds for \eqref{:10r} and both \eqref{:10s} and \eqref{:10t}. 
\end{prop}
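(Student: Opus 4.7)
The plan is to verify the hypotheses of Theorem \ref{l:22} for $ \muN = \mu _{\mathrm{gin}}^{\nN } $ and $ \mu = \mu _{\mathrm{gin}} $, applied separately to each of the two limit ISDEs \eqref{:10s} and \eqref{:10t}. Since the Ginibre point process is determinantal on $ \R ^2 $ with the explicit kernel, assumptions \As{H1}--\As{H2} follow from the determinantal machinery once the label is fixed, e.g.\ by increasing modulus as in \eqref{:25g}. The diffusion matrix is the identity, so \As{H4} is trivial and \eqref{:21j} in \As{J1} reduces to $ \bbb ^{\nN }= \half \dmuN $, making SDE \eqref{:20m} coincide with \eqref{:10r}. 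Assumption \As{H3} is handled as in \cite{o-t.tail}. Thus the substantive work is the verification of \As{J2}--\As{J6}.

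The key algebraic point is the existence of two distinct decompositions \eqref{:21m} of $ \dmuN _{\mathrm{gin}} $, one for each target ISDE. A direct computation from the density of $ \mu _{\mathrm{gin}}^{\nN } $ yields
\[
 \dmuN (x,\mathsf{y}) = -2x + 2\sum_{j} \frac{x-y_j}{|x-y_j|^2}.
\]
For \eqref{:10t} we read off $ \uN (x)= -2x $, $ \gN (x,y)=2(x-y)/|x-y|^2 $ and $ \vN \equiv 0 $; the conditional convergence of the pairwise sum and the identification of $ w $ follow the template of \cite{o.isde,o-t.tail}. For \eqref{:10s} we exploit the special cancellation in the Ginibre ensemble: by the translation-invariant character of its correlation measure, the confining drift $ -2x $ can be absorbed into the pairwise sum reorganized in the form $ \sum_{|x-y_j|<r}$ instead of $ \sum_{|y_j|<r}$, yielding $ \uN \equiv 0 $ and the same $ \gN $ with a different $ w $. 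The smoothness conditions \As{J2}(1)--(3) are immediate since $ \uN $, $ \gN $ are explicit and $ \nN $-independent or converge uniformly on compacts; the near-diagonal bound \eqref{:21q} follows from $ \rho _{\mathrm{gin}}^{\nN ,2}(x,y)\le C(r)|x-y|^{2}$ on $ \Sr $, a consequence of the determinantal formula and the quadratic vanishing of the Vandermonde factor, combined with the $ 1/|x-y|$ singularity of $ \gN $ (integrable in $ \R ^2 $ for any $ \phat <4 $). The tail estimate \eqref{:21r} in each decomposition is proved in \cite{o.isde,o-t.tail} and expresses exactly the equivalence of the two limit drifts.

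For the remaining assumptions: \As{J3} holds because $ \mu _{\mathrm{gin}} $ is quasi-Gibbs with logarithmic potential \cite{o.rm,o.rm2}; \As{J4} (non-collision) is known for determinantal point processes \cite{o.col}; \As{J5} follows from Lemma \ref{l:67} together with the uniform bound on $ \rho _{\mathrm{gin}}^{\nN ,1}$ and the Gaussian decay of the Ginibre density outside the bulk, which yields \eqref{:25f}; and \As{J6} ($ \mu $-uniqueness in law) follows from the tail triviality of the Ginibre point process \cite{o-o.tt} combined with the uniqueness machinery of \cite{o-t.tail}. With all hypotheses in place, Theorem \ref{l:22} yields \eqref{:21a} for both ISDEs.

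The main obstacle is expected to be the double representation of $ \dmuN _{\mathrm{gin}} $: constructing two genuinely different decompositions of the finite-$ \nN $ logarithmic derivative that converge to the two different limit drifts in \eqref{:10s} and \eqref{:10t}, and simultaneously verifying the $ L^{\phat }$-convergence \eqref{:21r} in each case. The absorption of the confining drift into the pairwise sum is peculiar to the Ginibre ensemble and relies on its translational symmetry at the level of correlation functions; this is precisely the sensitivity of the limit ISDE emphasized in Section \ref{s:1}.
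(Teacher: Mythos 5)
Your proposal is correct in substance and follows the same overall route as the paper: \As{J2}--\As{J5} (apart from \eqref{:21T}) are delegated to \cite{o.rm,o.isde}, \eqref{:21T} is obtained from \lref{l:67} together with the uniform bound $ \rho _{\mathrm{gin}}^{\nN ,1}\le 1/\pi $, and \As{J6} comes from tail triviality via \cite{o-o.tt,o-t.tail}; your verification of \eqref{:21q} from the quadratic vanishing of $ \rho _{\mathrm{gin}}^{\nN ,2}$ on the diagonal is exactly the right estimate. The one place where you diverge from the paper is in how \emph{both} \eqref{:10s} and \eqref{:10t} are extracted. You propose running \tref{l:22} twice, with two genuinely different finite-$ \nN $ decompositions \eqref{:21m} of $ \dmuN $. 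The paper instead applies the theorem once and then invokes the a.s.\ identity $ \mathsf{d}_1=\mathsf{d}_2 $ of the two limit drift representations, proved in \cite{o.isde}: since the logarithmic derivative of $ \mu _{\mathrm{gin}}$ is unique (\rref{r:21}), the right-hand sides of \eqref{:10s} and \eqref{:10t} coincide $ \muone $-a.e., so any solution of \eqref{:21w} solves both. The paper's route is cleaner and also sidesteps a small wrinkle in yours: the truncation in \eqref{:21l} is built from $ \chi _s(x-y_i)$, i.e.\ it is centred at the tagged particle, which matches the $ |X^i_t-X^j_t|<r $ summation of \eqref{:10s} but not the origin-centred summation $ |X^j_t|<r $ of \eqref{:10t}; to land on \eqref{:10t} you would still need the equivalence of the two reorganized sums, which is precisely the content of $ \mathsf{d}_1=\mathsf{d}_2 $. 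So your ``absorption of the confining drift by translation invariance'' is the correct heuristic, but it is cited rather than reproved in the paper, and it is most economically packaged as the a.s.\ equality of the two limit drifts rather than as two separate applications of \tref{l:22}.
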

\begin{proof}
\As{J2}--\As{J5} except \eqref{:21T} are proved in \cite{o.rm,o.isde}. 
\eqref{:21T} is obvious for a Ginibre point process because 
their one-correlation functions with respect to the Lebesgue measure have 
a uniform bound such that $ \rho _{\mathrm{gin}}^{\nN , 1} \le 1/\pi $. 
This estimate follows from \thetag{6.4} in \cite{o.isde} immediately. 
Let $ \mathsf{d}_1$ and $ \mathsf{d}_2$ be the logarithmic derivative associated with ISDEs \eqref{:10s} and \eqref{:10t}. 
Then $ \mathsf{d}_1 = \mathsf{d}_2 $ a.s.\ \cite{o.isde}. Hence we conclude \pref{l:33} 
\end{proof}

\subsection{Gibbs measures with Ruelle-class potentials} \label{s:55}
Let $ \mu ^{\Psi }$ be Gibbs measures 
with Ruelle-class potential $ \Psi (x,y) = \Psi (x-y)$ 
that are smooth outside the origin. 
Let $ \Phi ^N \in C^{\infty}(\S )$ be a confining potential for the $ N $- particle system. 
We assume that the correlation functions of $ \mu ^{\Phi ^N ,\Psi }$ 
satisfy bounds 
$ \sup_N \rho ^{\nN , m } \le \cref{;35a} ^m $ for some constants 
$ \Ct{;35a}$; see the construction of \cite{ruelle.2}. 
Then one can see in the same fashion as \cite{o-t.tail} that 
$ \mu ^{\Psi }$ satisfy \As{J2}--\As{J5} except \eqref{:21T}. 
Under the condition $ \sup_N \rho ^{\nN , m } \le \cref{;35a} ^m $, 
\eqref{:21T} is obvious. 
Moreover, if $ \mu ^{\Psi }$ is a grand canonical Gibbs measure with 
sufficiently small inverse temperature $ \beta $, then $ \mu ^{\Psi }$ 
is tail trivial. Hence we can obtain \As{J6} in the same way as \cite{o-t.tail} in this case. 
We present two concrete examples below. 

\subsubsection{Lennard--Jones 6-12 potentials} \label{s:55a}
Let $\S =\R ^3$ and $\beta >0$. 
Let $\Psi _{6-12}(x)=|x|^{-12}-|x|^{-6}$ be the Lennard-Jones potential. 
The corresponding ISDEs are given by the following. 
\begin{align}\notag 
dX_t^{N,i}=
& 
dB_t^i+\frac{\beta }{2} \{ 
\nabla \Phi ^N (X_t^{N,i}) + 
\sum_{j=1, \atop j\neq i}^{\nN }\frac{12(X_t^{N,i}-X_t^{N,j})}{|X_t^{N,i}-X_t^{N,j}|^{14}}- 
\frac{6(X_t^{N,i}-X_t^{N,j})}{|X_t^{N,i}-X_t^{N,j}|^{8}} \} dt 
\ (1\le i\le N)
,\\ \notag 
dX_t^i=
& 
dB_t^i+\frac{\beta }{2}\sum_{j=1,j\neq i}^\infty
\{ \frac{12(X_t^i-X_t^j)}{|X_t^i-X_t^j|^{14}}- \frac{6(X_t^i-X_t^j)}{|X_t^i-X_t^j|^{8}} \}dt\quad (i\in\N)
.\end{align}

\subsubsection{Riesz potentials} \label{s:56}
Let $d<a\in\N$ and $\beta >0$. Let $\Psi _a(x)=\frac{\beta }{a }|x|^{-a}$ the Riesz potential. 
The corresponding SDEs are given by 
\begin{align}\notag 
dX_t^{N,i}= & 
dB_t^i+\frac{\beta }{2}
 \{ 
\nabla \Phi ^N (X_t^{N,i}) + 
\sum_{j=1,j\neq i}^{\nN } \frac{X_t^{N,i}-X_t^{N,j}}{|X_t^{N,i}-X_t^{N,j}|^{2+a}}
\} 
dt \quad (1\le i\le N)
,\\\notag 
dX_t^i= & 
dB_t^i+\frac{\beta }{2}\sum_{j=1,j\neq i}^\infty \frac{X_t^i-X_t^j}{|X_t^i-X_t^j|^{2+a}}dt \quad (i\in\N)
.\end{align}

\section{Acknowledgments} 

{}\quad 

\noindent 
Y.K. is supported by Grant-in-Aid for JSPS JSPS Research Fellowships (No.\!\! 15J03091).

\noindent 
H.O. is supported in part by a Grant-in-Aid for Scenic Research (KIBAN-A, No.\!\! 24244010; KIBAN-A, No.\!\! 16H02149; KIBAN-S, No.\!\! 16H06338) from the Japan Society for the Promotion of Science.


\noindent 
Yosuke Kawamoto \\
Faculty of Mathematics, Kyushu University, Fukuoka 819-0395, Japan
\\
\texttt{y-kawamoto@math.kyushu-u.ac.jp}

\bs
\noindent 
Hirofumi Osada \\
Faculty of Mathematics, Kyushu University, Fukuoka 819-0395, Japan
\\
\texttt{osada@math.kyushu-u.ac.jp}
\\

\end{document}